\documentclass{article}

\usepackage{amsmath}
\usepackage{amsthm}
\usepackage[utf8]{inputenc}
\usepackage{amsfonts}
\usepackage{amssymb}
\usepackage{enumitem}
\usepackage{color}
\usepackage{hyperref}
\usepackage{MnSymbol}
\usepackage{graphicx}
\usepackage{stmaryrd}

\newcommand{\phieps}{\phi_{\varepsilon}}
\newcommand{\vareps}{\varepsilon}
\newcommand{\oeps}{\Omega_{\varepsilon}}

\newcommand{\btxfxe}{\left(t,\bar{x},\frac{x}{\varepsilon}\right)}
\newcommand{\x}{\bar{x}}

\newcommand{\rats}{\overset{2}{\rightharpoonup}}

\newcommand{\fxe}{\frac{x}{\varepsilon}}
\newcommand{\ie}{i.\,e., }

\newcommand{\R}{\mathbb{R}}
\newcommand{\Z}{\mathbb{Z}}
\newcommand{\N}{\mathbb{N}}
\newcommand{\geps}{\Gamma_{\varepsilon}}
\newcommand{\veps}{v_{\varepsilon}}

\newcommand{\ueps}{u_{\varepsilon}}
\newcommand{\peps}{p_{\varepsilon}}
\newcommand{\tueps}{\tilde{u}_{\varepsilon}}
\newcommand{\tveps}{\tilde{v}_{\varepsilon}}
\newcommand{\oem}{\Omega_{\varepsilon}^M}
\newcommand{\weps}{w_{\varepsilon}}

\newcommand{\oef}{\Omega_{\varepsilon}^f}

\newcommand{\sepm}{S_{\varepsilon}^{\pm}}

\newcommand{\oems}{\Omega_{\varepsilon}^{M,s}}
\newcommand{\oemf}{\Omega_{\varepsilon}^{M,f}}

\newcommand{\foe}{\dfrac{1}{\varepsilon}}
\newcommand{\bxfxe}{\left(\bar{x},\dfrac{x}{\varepsilon}\right)}
\newcommand{\spaceH}{\mathcal{H}}
\newcommand{\spaceV}{\mathcal{V}}
\newcommand{\Div}{\mathrm{Div}}

\newtheorem{definition}{Definition}
\newtheorem{remark}{Remark}
\newtheorem{theorem}{Theorem}
\newtheorem{proposition}{Proposition}
\newtheorem{lemma}{Lemma}
\newtheorem{corollary}{Corollary}

\title{Effective interface laws of Navier-slip-type involving the elastic displacement for Stokes flow through a thin porous elastic layer}
\author{Markus Gahn and Maria Neuss-Radu}
\date{July 2025}

\begin{document}

\maketitle

\begin{center}
    \textit{Dedicated to Willi J\"ager on the anniversary of his 85th birthday.}
\end{center}
\smallskip

\begin{abstract}
This paper presents a rigorous derivation of an effective model for fluid flow through a thin elastic porous membrane separating two fluid bulk domains. The microscopic setting involves a periodically structured porous membrane composed of a solid phase and fluid-filled pores, with  thickness and periodicity of order $\vareps$, small compared to the size of the bulk regions. The microscopic model is governed by a coupled fluid-structure interaction system: instationary Stokes equations for the fluid and linear elasticity for the solid, with two distinct scalings of the elastic stress tensor yielding different effective behaviors.

Using two-scale convergence techniques adapted to thin domains and oscillatory microstructures, the membrane is reduced to an effective interface across which transmission conditions are derived. The resulting macroscopic model couples the bulk fluid domains via effective interface laws of Navier-slip-type including the dynamic displacement. The character of this coupling depends critically on the choice of the scaling in the elastic stress tensor, leading to either a membrane equation or a Kirchhoff-Love plate equation for the effective displacement.
The resulting interface conditions naturally admit mass exchange between the adjacent fluid regions. In the analytical framework, a new two-scale compactness theorem for the symmetric gradient is established, underpinning the passage to the limit in the coupled system. Moreover, cell problem techniques are employed systematically to construct admissible test functions and to rigorously extract the effective macroscopic coefficients.
\end{abstract}

\section{Introduction}

Fluid flow through elastic porous membranes separating bulk domains occurs in many applications from biology, medicine, material sciences and engineering, where it is often also coupled to mass or heat transport \cite{Huh2010,kuan2024fluid,SilvaJaeger2020}. Mathematical descriptions for such processes often treat the membrane as an interface where effective interface laws are used to describe the transition between the bulk domains. The rigorous derivation of such effective interface laws by methods of multi-scale analysis and dimension reduction is of crucial importance for the applications, as it yields effective parameters which can be calculated from standard problems. This allows the microscopic geometry and the processes inside the layer to be captured in the effective laws.

\textit{Willi J\"ager}, to whom we dedicate this paper, is one of the pioneers in the analysis of multiscale systems, especially in the rigorous derivation of transition laws at interfaces coupling different regimes. We mention here the seminal papers \cite{JaegerMikelic1996, mikelic2000interface}, where, together with Andro Mikeli\'c, he rigorously derived the law of Beavers-Joseph \cite{BeaversJoseph1967}, coupling free fluid flow with flow in porous bulk domains, by constructing accurate multiscale approximations based on boundary layers. The developed techniques were also successfully applied for the derivation of effective boundary conditions for viscous flows over rough boundaries \cite{jager2001roughness}. Properties of the effective models were analyzed numerically in \cite{jager2001asymptotic}. The mentioned contributions considered the case of rigid porous media. However, \textit{Willi J\"ager} and his collaborators also made important contributions to multiscale analysis of poroelastic systems. In particular, they promoted the application of poroelastic systems coupled with reaction-diffusion-transport problems. E.g., in \cite{JaegerMikelicNeussRaduAnalysis,JaegerMikelicNeussRaduHomogenization} fluid-structure interaction in cell tissues is coupled with diffusion, transport and reactions in the cells and the extra-cellular space. Passing to a scale limit, a quasi-static Biot system coupled with the upscaled reactive flow is obtained. Effective Biot’s coefficients depend on the reactant concentration. More recently, in \cite{gahn2022derivation}, \textit{Willi J\"ager} and his collaborators presented the first rigorous derivation of effective interface laws at the interface between two fluid bulk domains separated by an elastic porous membrane. For a specific scaling, in the limit a Kirchhoff-Love equation for the effective elastic displacement, together with a no slip condition for the effective fluid velocity was obtained. 

In this paper we consider, similar to \cite{gahn2022derivation}, fluid flow through an elastic porous membrane separating two fluid bulk domains. We consider a different scaling of the microscopic model with the aim of obtaining an effective model that allows mass transport across the membrane. This property is important for applications and was not provided by the effective model in \cite{gahn2022derivation}, even after adding first-order correctors to the effective approximation. In \cite{gahn2025effective} fluid flow through a rigid porous membrane was treated for a scaling of the flow equations which enabled mass flow through the membrane for the effective model. The effective interface laws for the velocity consisted in the continuity of the normal component and a slip  condition of Navier-type (similar to the Beavers-Joseph interface condition obtained for coupling between free fluid and porous bulk domains) for the tangential component on both sides of the interface. The aim of this paper is to rigorously derive effective interface laws for a fluid-structure interaction problem in the thin layer, and, in particular, to investigate how the interfacial laws in \cite{gahn2025effective} are affected when the rigid membrane is replaced by an elastic one. 

The microscopic fluid-structure interaction model considered in this paper is formulated in a domain consisting of two bulk regions separated by a porous membrane made up of a solid part and a pore space filled with fluid. The thickness of the membrane is of order $\vareps$ and the pore structure is periodic in tangential direction also with period $\vareps$. The microscopic interface between the two phases of the membrane, denoted by  $\Gamma_\vareps$ also has an $\vareps$-periodic structure.
The fluid flow in the bulk regions and in the pores of the membrane is described by an instationary Stokes system. The displacement of the solid part of the membrane is given by linear elasticity equations, in particular we consider two different scalings of the elastic stress tensor by $\varepsilon^{-\gamma}$, for $\gamma = 1,3$, leading to different effective behavior of the displacement. These two subsystems are coupled via transmission conditions at the microscopic fluid-solid interface $\Gamma_\vareps$, which consist in the continuity of the velocity and continuity of the normal stress. At the lateral boundaries of the domain we impose homogeneous Dirichlet boundary conditions for the velocity and for the displacement (in contrast to \cite{gahn2025effective}, where the microscopic velocity was assumed to satisfy periodic boundary conditions). This microscopic model is homogenized in the limit $\vareps \to 0$ by rigorous methods of two scale analysis and dimension reduction. In the limit we obtain an \textit{effective} or \textit{macroscopic model}, where the porous membrane is reduced to an effective interface $\Sigma$ (separating the two bulk regions) for which effective interface laws are derived. It has the advantage of reduced (computational) complexity, but still maintains key features about the microgeometry and processes in the membrane. 

The effective model consists of the instationary Stokes equations in the bulk regions coupled by effective transmission conditions at the interface $\Sigma$, and an effective system for the displacement which highly depends on the choice of the parameter $\gamma$. More precisely, for $\gamma = 1$ we obtain a membrane equation, while for $\gamma = 3$ the evolution of the limit displacement is given by a Kirchhoff-Love plate equation. The transmission conditions for the fluid flow at the effective interface $\Sigma$ are of the same type as those obtained in \cite{gahn2025effective}, but now contain additional contributions involving the limit displacement. More precisely, we obtain again the continuity of the normal velocity, and Navier-slip-type conditions for the tangential component of the velocity on both sides of $\Sigma$, which in the case $\gamma =1$ involve the time derivative of the effective displacement, and in the case $\gamma = 3$ the time-derivative of the out-of-plane displacement. Furthermore, the jump in the normal component of the normal stress turns out to be zero for for $\gamma = 1$, which is due to the contribution of the elastic structure, and does not hold in general for the rigid case.  In both cases $\gamma = 1,3$, the effective elasticity problem is coupled to the fluid system via force terms. 

To the best of our knowledge, the effective model derived in this paper is the first rigorous homogenization result for a microscopic fluid-structure interaction model, which allows mass transfer through the membrane. In \cite{Orlik2023}, where an application of fluid flow through 
to thin filters consisting of elastic slender yarns in contact 
was considered, the authors extended the effective model from \cite{gahn2022derivation} by incorporating an additional (phenomenological) interface condition obeying Darcy's law, in order to include mass transport through the filter. Comparing the model from \cite{Orlik2023} with the model derived in this paper, some similarities can be observed with the case $\gamma=3$, e.g., with regard to the transmission condition for the jump of the normal component of the normal fluid stress. However, there are also significant differences, such as the fact that in \cite{Orlik2023} all components of the fluid velocity are continuous across the interface $\Sigma$ and there is no Navier-slip-type boundary condition on each side of the interface. There, only a condition for the jump of the tangential component of the normal fluid stress is assumed. 

For the derivation of the effective model, we use two-scale convergence for thin domains \cite{NeussJaeger_EffectiveTransmission} and for rapidly oscillating surfaces \cite{bhattacharya2022homogenization}, which allow to deal simultaneously with the homogenization of the periodic structures in the membrane and the reduction of the thin membrane to a (lower dimensional) interface. In a first step, we provide \textit{a priori} estimates for the solutions to the microscopic problems with an explicit dependence on the scale parameter $\vareps$. To obtain refined estimates for the velocity in the layer and for the displacement, we first control the velocity in the layer via the bulk velocities, and afterwards, we estimate the displacement via the continuity of the velocities across $\Gamma_\vareps$. Based on the \textit{a priori} estimates, we prove (two-scale)  compactness, where in case of the displacement we have to distinguish between the cases $\gamma =1$ and $\gamma =3$. In the latter case, in the two-scale limit a Kirchhoff-Love-displacement is obtained, which was already shown in \cite{GahnJaegerTwoScaleTools}, see  also \cite{griso2020homogenization} for the pure elastic case for a thin perforated layer. For the case $\gamma =1$, where all components of the displacement and of the symmetric gradient are of the same order (and this is not valid for the gradient itself), we prove a new compactness result for the symmetric gradient. To pass to the limit in the microscopic model, we again have to deal separately with the two cases. The derivation of the limit problem for the flow equations requires a careful choice of test functions adapted to the properties of the limit velocities and to the scaling of the elastic stress term. Compared to \cite{gahn2025effective}, where the microscopic fluid equations had a similar scaling, however no-slip boundary conditions were assumed on $\Gamma_\vareps$, here due to the coupling to the elastic structure, in the case $\gamma=1$, also test-functions which are constant on the solid domain are allowed. A novel aspect compared to \cite{gahn2022derivation}, especially regarding the derivation of effective elasticity equations, is the representation of the test functions by means of solutions to cell problems. This allows us to introduce effective coefficients and to derive effective transmission conditions on $\Sigma$, involving both the velocity and the displacement. 

In summary, the novel contributions of our paper are:
\begin{itemize}
    \item $\vareps$-uniform \textit{a priori} estimates for a fluid-structure interaction problem in a thin layer coupled to fluid bulk domains 
    \item New two-scale compactness result for the symmetric gradient for the case when all components of the displacement and of the symmetric gradient are of the same order (and this is not valid for the gradient itself)
    \item Derivation of effective models based on the systematic construction of test-functions with the help of solutions to cell problems
    \item Rigorous derivation of effective interface laws of Navier-slip-type involving the elastic displacement
    \item Identification of a proper scaling for a microscopic fluid-structure interaction model that enables an effective mass transport through the membrane
\end{itemize}
We have already mentioned previous contributions related to our paper.  Further, there is a large literature on the derivation of effective interface conditions for fluid flow through rigid sieves and filters. Here, we have to mention  the works \cite{AllaireII1991,BourgeatGipoulouxMarusicPaloka2001,ConcaI1987,ConcaII1987,sanchez1982boundary} and refer to \cite{gahn2025effective} for a more complete list of references and details, in particular including the treatment of Stokes-flow through thin porous
layers not coupled to bulk regions. Concerning the fluid flow through thin poroelastic layers (not coupled to bulk domains), two contributions more closely related to our work are \cite{buvzanvcic2024poroelastic} and \cite{gahn2024derivation}, which deal with the derivation of Biot-type plate models. In \cite{gahn2024derivation} the thickness and the periodicity of the layer are both of order $\vareps$, while in \cite{buvzanvcic2024poroelastic} the thickness of the layer is large compared to the periodicity and there are several layers with different microstructures. Although we also consider a thin poroelastic layer, in the limit we do not obtain a Biot-type plate equation on the interface $\Sigma$. A crucial point is that the macroscopic limit pressure on $\Sigma$ (which has a main contribution to the Biot-system) is equal to $0$, due to the coupling to the bulk domains.

The paper is organized as follows. The microscopic model including the assumptions on the data is formulated in Section \ref{sec:micro_model}. In Section \ref{sec:main_results} the main results of the paper including the macroscopic model are stated. This section also aims to give a rough overview of the derivation of the effective models for the two cases $\gamma = 1,3$. In Section \ref{sec:apriori} $\vareps$-uniform \textit{a priori} estimates for the microscopic solutions are proved. Compactness results for the microscopic solutions are proved in Section \ref{sec:compactness_results}. 
In Sections \ref{sec:limit_problem_gamma1} and \ref{sec:limit_problem_gamma3}, macroscopic models including effective interface laws are derived for the fluid flow and for the elasticity problems, corresponding to the cases $\gamma = 1,3$ respectively. A discussion of the obtained effective interface laws also in relation to literature contributions is given in Section \ref{sec:conclusion}. Finally, some auxiliary results and estimates are given in the appendix A. Furthermore, in appendix B, we briefly recall the definition of two-scale convergence for thin  layers with microstructure together with known compactness results.

\section{The microscopic model}
\label{sec:micro_model}

\subsection{The microscopic geometry}
\label{sec:micro_geometry}

The geometry of the microscopic domain corresponds to that defined in \cite{gahn2025effective}. The difference is that we only consider the case when the solid part of the thin layer does not touch the bulk domains. However, for the sake of completeness, we repeat the description of the microscopic geometry from \cite{gahn2025effective}, with the appropriate adjustments.
We consider the domain $\Omega := \Sigma \times (-H, H) \subset \R^3$ with $H > 0$, and 
 $\Sigma = (a,b)\subset \R^2$ with $a,b \in \Z^2$ and $a_i < b_i$ for $i=1,2$. Further, we assume that $\vareps^{-1} \in \N$. The domain $\Omega$  consists of two bulk domains 
\begin{align*}
\oeps^+ := \Sigma \times (\vareps,H),  \qquad \oeps^- := \Sigma \times (-H,-\vareps),
\end{align*}
which are separated by the thin layer
\begin{align*}
\oem := \Sigma \times (-\vareps,\vareps).
\end{align*}
Within the  thin layer we have a fluid part $\oemf$ and a solid part $\oems$, which have a periodical microscopic structure. More precisely, we define  the reference cell
\begin{align*}
Z := Y\times (-1,1) := (0,1)^2 \times (-1,1),
\end{align*}
with top and bottom
\begin{align*}
S^{\pm} := Y \times \{\pm 1\}.
\end{align*}
For $n\in \N$, let us denote the interior of a set $M \subset \mathbb{R}^n$ by $\mathrm{int}(M)$.
The cell $Z$ consists of a solid part $Z_s\subset Z $, see Figure \ref{fig:FigureMicroDomain}, and a fluid part $Z_f \subset Z$ with common interface $\Gamma = \mathrm{int}\left( \overline{Z_s} \cap \overline{Z_f} \right)$. Hence, we have 
\begin{align*}
Z = Z_f \cup Z_s \cup \Gamma.
\end{align*}
In this paper, we consider the case when $Z_s$ does not touch the boundary parts $S^\pm$ of $Z$, that is,
\begin{align}\label{ZscapSpm}
\overline{Z_s} \cap S^\pm = \emptyset.
\end{align}
Furthermore, we assume that $Z_f$ and $Z_s$ are open, connected   with Lipschitz-boundary, and the lateral boundary is $Y$-periodic which means that for $i=1,2$ and $\ast \in \{s,f\}$
\begin{align*}
\left(\partial Z_{\ast} \cap \{y_i = 0\}\right) + e_i = \partial Z_{\ast} \cap \{y_i=1\}.
\end{align*}
The outer unit normal with respect to $Z_{\ast}$ for $\ast \in \{s,f\}$ is denoted by $\nu_{\ast}$.
We introduce the set $K_{\vareps}:= \{k \in \Z^2 \times \{0\} \, : \, \vareps(Z + k) \subset \oem\}$.  Clearly, we have $$\oem = \mathrm{int}\left(\bigcup_{k\in K_{\vareps}} \vareps(\overline{Z} + k)\right).
$$
Now, we define the fluid and solid part of the thin layer, see Figure \ref{fig:FigureMicroDomain}, by
\begin{align*}
\oemf := \mathrm{int} \left( \bigcup_{k \in K_{\vareps}} \vareps \left(\overline{Z_f} + k \right) \right),
\quad 
\oems := \mathrm{int} \left( \bigcup_{k \in K_{\vareps}} \vareps \left(\overline{Z_s} + k \right) \right).
\end{align*}
The fluid-solid interface between the fluid and the solid domain in the membrane is denoted by 
\begin{align*}
\geps := \mathrm{int}\left( \overline{\oems} \cap \overline{\oemf}\right).
\end{align*}
The interfaces between the fluid phase $\oemf$ and the bulk domains $\oeps^{\pm}$ are defined by
\begin{align*}
\sepm := \Sigma \times \{\pm \vareps\}.
\end{align*}
Note that, due to  \eqref{ZscapSpm}, the solid phase $\oems$ does not touch the bulk domains.
Altogether, we have the following decomposition of the domain $\Omega$
\begin{align*}
\Omega &= \oeps^+ \cup \oeps^- \cup \oem \cup  S_{\vareps}^+ \cup S_{\vareps}^-
\\
&= \oeps^+ \cup \oeps^- \cup \oems \cup \oemf \cup \geps \cup  S_{\vareps}^+ \cup S_{\vareps}^-.
\end{align*}
The whole fluid part is defined by
\begin{align*}
\oef := \Omega \setminus \overline{\oems}.
\end{align*}
\begin{figure}
\hspace{1cm}\begin{minipage}{4.6cm}
\centering
\includegraphics[scale=0.1025]{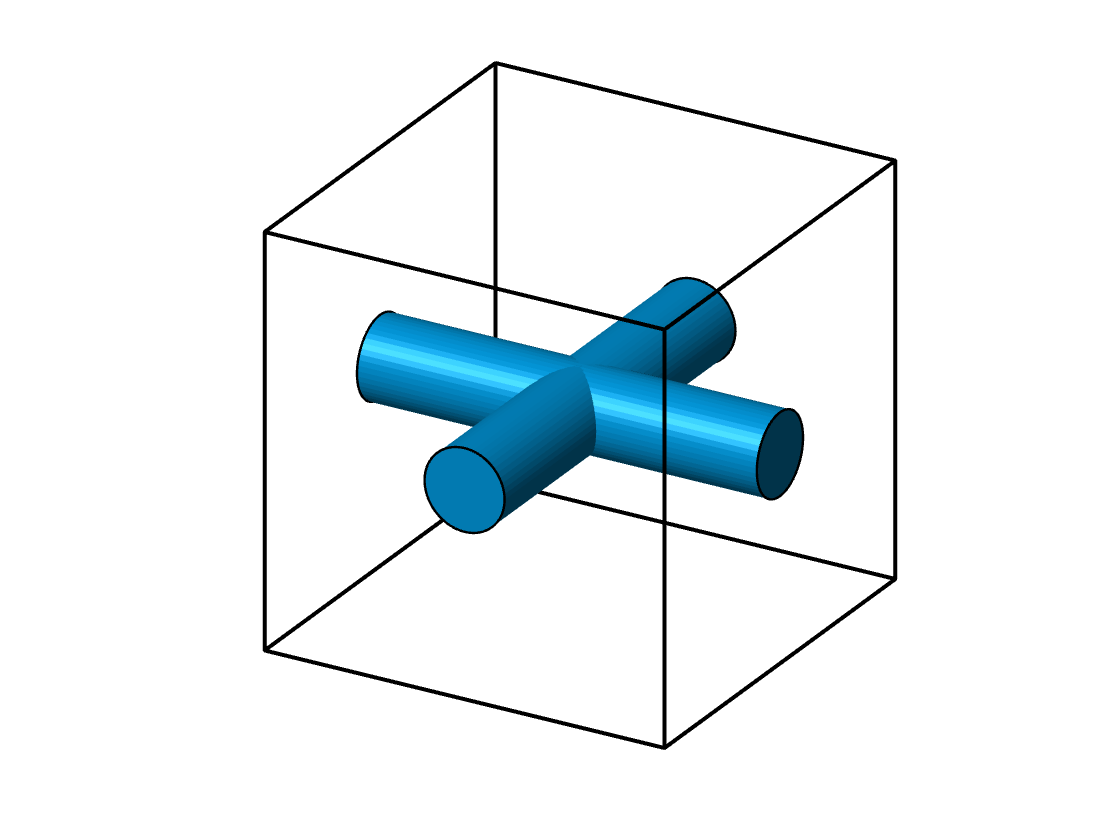}
\end{minipage} 
\hspace{0.3cm}
\begin{minipage}{4.7cm}
\centering
\includegraphics[scale=0.1725]{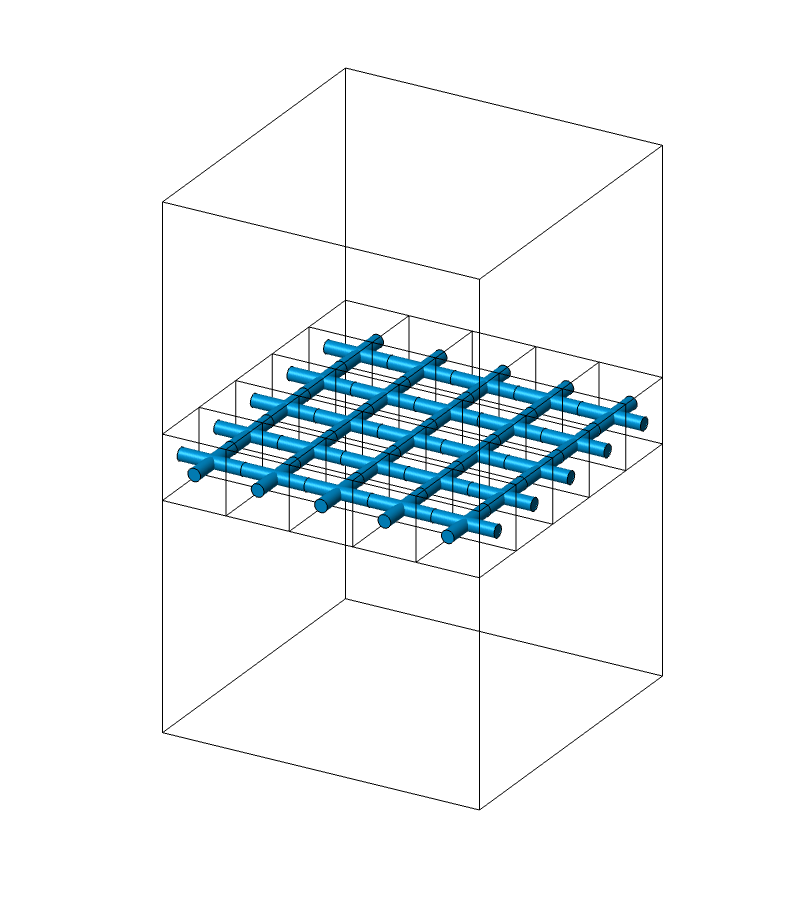}
\end{minipage}
\caption{Left: A reference cell $Z$ for the porous layer with the solid part $Z_s$ highlighted by the coloring. 
Right: The microscopic domain $\Omega$ with the porous layer $\oem$ consisting of the fluid part $\oemf$ and the solid part $\oems$.}
\label{fig:FigureMicroDomain}
\end{figure}
We further assume that the domains $\oef$, $\oemf$, and $\oems$ are connected and Lipschitz. 

Corresponding to the microscopic geometry, we split the boundary $\partial \Omega$ into several parts ($\ast \in \{f,s\}$).
The upper and lower boundary of $\Omega$ is denoted by
\begin{align*}
    \partial_N \Omega &:= \bigcup_{\pm} \partial_N \Omega^\pm:= \bigcup_{\pm}\Sigma \times \{\pm H\}.
\end{align*}
Here a stress boundary condition is assumed for the fluid flow. 
Furthermore, we have the boundary portions where Dirichlet boundary conditions are imposed:
\begin{align*}
\partial_D \oeps^+ &:= \partial \Sigma \times (\vareps,H),
\\
\partial_D \oeps^- &:= \partial \Sigma \times (-H,-\vareps),
\\
\partial_D \oem &:= \partial \Sigma \times (-\vareps,\vareps),
\\
\partial_D \oeps^{M,\ast} &:= \mathrm{int} \left(\partial_D \oem \cap \partial \oeps^{M,\ast}\right),
\\
\partial_D \oef &:= \partial_D \oemf \cup \bigcup_{\pm} \partial_D \oeps^{\pm}.
\end{align*}

In the limit $\vareps \to 0 $ the thin layer $\oem$ is reduced to the interface $\Sigma$ and the domains $\oeps^{\pm}$ converge to the  macroscopic bulk domains $\Omega^{\pm}$ defined by
\begin{align*}
\Omega^+ := \Sigma \times (0,H),
\quad 
\Omega^- := \Sigma \times (-H,0),
\end{align*}
and the Dirichlet-parts of the boundary are given by
\begin{align*}
\partial_D \Omega^+ := \partial \Sigma \times (0,H), 
\quad 
\partial_D \Omega^- := \partial \Sigma \times (-H,0), 
\end{align*}
and
\begin{align*}
\partial_D \Omega := \partial \Sigma \times (-H,H).
\end{align*}
The outer unit normal to $\Omega^\pm$ at $\Sigma$ is given by $\nu^\pm=\mp e_3$, where $e_3$ is the third standard unit vector in $\R^3$.

\subsection{Notations } 

For $U\subset \R^m$ open with $m \in \N$ we denote by $L^p(U)$ with $p \in [1,\infty]$ the usual Lebesgue spaces, and for $m \in \N_0$ the Sobolev space is defined by $W^{1,p}(U)$. For $p=2$ we shortly write $H^1(U):= W^{1,2}(U)$. For $\beta \in (0,1)$ we denote by $H^{\beta}(U)$ the Sobolev-Slobodeckii space (for $p=2$). For norms of vector valued functions with values in $\R^n$ for $n\in \N$, we usually skip the exponent, for example we write $\|\cdot\|_{L^2(U)} := \|\cdot\|_{L^2(U)^n}$. If $U$ is bounded, we define
\begin{align}
    L_0^2(U) := \left\{ \phi \in L^2(U)\, : \, \int_U \phi dx = 0\right\}.
\end{align}

We use the notation $\bar{x} = (x_1, x_2) \in \Sigma$ for a vector $x= (x_1, x_2, x_3)\in \mathbb{R}^3$. For an arbitrary function $\phieps : \oef \rightarrow \R^m$ for $m\in \N$ we define the restrictions to the bulk domains and the fluid part of the membrane by
\begin{align*}
\phieps^{\pm} := \phieps\vert_{\oeps^{\pm}}, \qquad \phieps^M := \phieps\vert_{\oemf}.
\end{align*}
The characteristic function of a set $A$ is denoted by $\chi_A$. 
For function spaces we use the index $\#$ to indicate functions which are periodic with respect to the first two variables. 
More precisely, we have
\begin{align*}
C_{\#}^{\infty}(\overline{Z}):= \left\{ \phi \in C^{\infty}(\R^2 \times [-1,1] ) \, : \, \phi \mbox{ is } Y\mbox{-periodic}\right\}.
\end{align*} 
Here, a $Y$-periodic function $\phi \in C^{\infty}(\R^2 \times [-1,1] )\to \R$ fulfills $\phi(y + e_i) = \phi(y)$ for all $y\in \R^2 \times [-1,1]$, and $i=1,2$, where $e_i$ are standard unit vectors in $\R^3$.
We denote by $H_{\#}^1(Z)$ the  closure of $C_{\#}^{\infty}(\overline{Z})$ with respect to the $H^1(Z)$-norm, and by $H^1_{\#}(Z_{\ast})$ for $\ast \in \{s,f\}$ the restriction of $H^1_{\#}(Z)$-functions on $Z_{\ast}$. We emphasize that for $L^p$-spaces we avoid to write $L^p_{\#}$, since these functions have no traces and if not stated otherwise we extend such functions periodically with respect to the first two variables.

For a Lipschitz domain $U\subset \R^n$ with $n\in \N$ and $\omega \subset \partial U$, we define
\begin{align*}
H^1(U,\omega):= \left\{ u \in H^1(U)\, :\, u= 0 \mbox{ on } \omega\right\}.
\end{align*}

For a Banach space $B$ its dual is given by $B^{\ast}$ and we denote the duality pairing between the dual space $B^{\ast}$ and $B$ by $\langle \cdot ,\cdot \rangle_B$. Further, for an arbitrary open set $U\subset \R^m$ with $m\in \N$ we write $L^p(U,B)$ for the usual Bochner spaces (for $p\in [1,\infty]$).

We denote the symmetric gradient of a weakly differentiable vector field $u:\Omega \to \mathbb{R}^3$ by
$D(u):= \frac12 \left(\nabla u + \nabla u^T\right)$. Furthermore, for a weakly differentiable vector field $u:\Sigma \to \mathbb{R}^2$, we use the notation $D_{\x}u$ for a matrix in $\mathbb{R}^{2\times 2}$ as well as for the trivial embedding in $\mathbb{R}^{3\times 3}$ via 
$\left( \begin{array}{ccc} \partial_1 u_1 &\frac{1}{2}(\partial_1 u_2 + \partial_2 u_1) & 0\\ \frac{1}{2}(\partial_1 u_2 + \partial_2 u_1) &\partial_2 u_2 & 0\\ 0 & 0 & 0 \end{array} \right)$.

\subsection{The microscopic problem}
Now, we formulate the microscopic model together with the assumptions on the data and give the definition of a weak solution of the problem.

In the fluid part $\oef$ we have the fluid velocity $\veps = (\veps^+,\veps^M,\veps^-): (0,T)\times \oef \rightarrow \R^3$ and the fluid pressure $\peps = (\peps^+,\peps^M,\peps^-) : (0,T)\times  \oef \rightarrow \R$. 
The evolution of the  velocity and pressure of the fluid is given by
\begin{subequations}\label{MicroscopicModel}
\begin{align}
\label{def:micro_equations_fluid_pde}
\partial_t \veps^{\pm} - \nabla \cdot D( \veps^{\pm}) + \nabla \peps^{\pm}  &= f_{\vareps}^{\pm} &\mbox{ in }& (0,T)\times \oeps^{\pm},
\\
 \partial_t \veps^M- \vareps\nabla \cdot D( \veps^M )+  \foe\nabla \peps^M  &= 0 &\mbox{ in }& (0,T)\times \oemf,
\\
\nabla \cdot \veps &= 0 &\mbox{ in }& (0,T)\times \oef,
\\
\left(-\peps I + D(\veps) \right) \nu &= 0 &\mbox{ on }& (0,T) \times \partial_N \Omega,
\\
\label{MicroModelBCDirichlet}
\veps &= 0 &\mbox{ on }& (0,T) \times \partial_D \oef,
\\
\label{ContinuityVelocity}
\veps^\pm & = \veps^M & \mbox{ on }& (0,T) \times \sepm,
\\
\label{ContinuityNormalStress}
\left(-\peps^\pm I + D(\veps^\pm) \right) \nu^\pm = \left(-\frac{1}{\vareps}\peps^M I + \vareps D(\veps^M) \right)& \nu^\pm & \mbox{ on }& (0,T) \times \sepm,
\\
\veps(0) &= 0 &\mbox{ in }& \oef.
\end{align}
Here,  
$f_{\vareps}^\pm$ are the bulk forces and $\nu$ is the outer unit normal of $\oef$. At the interface $\sepm$ we assume the natural transmission conditions \eqref{ContinuityVelocity}-\eqref{ContinuityNormalStress} describing the continuity of the velocity and of the normal stresses. Here $\nu^\pm = \mp e_3$ is the outer unit normal to $\oeps^\pm$ on $\sepm$.

The evolution of the displacement $\ueps:(0,T)\times \oems \rightarrow \R^3$ is described by 
\begin{align}
    \partial_{tt} \ueps - \frac{1}{\vareps^{\gamma}} \nabla \cdot (A_{\vareps} D(\ueps)) &= 0 &\mbox{ in }& (0,T)\times \oems,
    \\
    \ueps &= 0 &\mbox{ on }& (0,T)\times \partial_D \oems,
    \\
    \ueps(0) = \partial_t \ueps(0) &=0 &\mbox{ in }& \oems,
\end{align}
with $\gamma \in \{1,3\}$. 
At the interface $\geps$ between the solid and the fluid we assume continuity of the velocities and of the normal stresses:
\begin{align}
    \veps^M &= \partial_t \ueps &\mbox{ on }& (0,T)\times \geps,
    \\
    \left(-\frac{1}{\vareps}\peps^M I + \vareps D(\veps^M) \right) \nu &= -\frac{1}{\vareps^{\gamma}} A_{\vareps} D(\ueps)\nu &\mbox{ on }& (0,T)\times \geps.
\end{align}
\end{subequations}

\begin{remark}\
\begin{enumerate}
[label = (\roman*)]
\item We only consider non-trivial forces in the bulk regions $\oeps^{\pm}$. The idea is that we  want to investigate the influence of such forces on flow and displacement within the membrane. For applications, these forces seem to be the most important ones (together with pressure boundary conditions on $\partial_N \Omega$, which can be easily included in the model). However, the following methods can be extended to more general forces in the equations for flow and structure in the membrane.
\item In this paper, for the sake of simplicity, we only consider domains in $\R^3$, which play the most important role in the applications. However, all results can be extended to arbitrary spatial dimensions in an obvious way.

\end{enumerate}
\end{remark}

\noindent\textbf{Assumptions on the data:}
\begin{enumerate}[label = (A\arabic*)]
\item \label{ass:rhs_feps_pm} For the bulk forces we assume $f_{\vareps}^\pm \in L^2(\oeps^\pm)$ and there exists $f_0^\pm \in L^2(\Omega^{\pm})$ such that $\chi_{\oeps^{\pm}} f_{\vareps}^\pm \rightharpoonup f_0^\pm $ in $L^2(\Omega^\pm)$. In particular we have that $\chi_{\oeps^{\pm}} f_{\vareps}^\pm$ is bounded in $L^2(\Omega^\pm)$.

\item\label{AssumptionElasticityTensor} The elasticity tensor $A_{\epsilon}$ is defined by $A_{\epsilon}(x) := A\left(\fxe\right)$ with $A\in L_{\#}^{\infty}(Z^s)^{3\times 3 \times 3 \times 3}$ symmetric and coercive on the space of symmetric matrices, more precisely for $i,j,k,l=1,2,3$
\begin{align*}
A_{ijkl} &= A_{jilk} = A_{ljik},
\\
A(y) B : B &\geq c_0 \vert B\vert^2 \quad \mbox{ for almost  every } y \in Z,
\end{align*}
with $c_0 > 0$ and all $B \in \R^{3\times 3}$ symmetric.

\end{enumerate}

\begin{definition}[Weak formulation of the microscopic model]\label{def:weak_solution_micro_model}
Let $\gamma = 1,3$. We say that the triple $(\veps,\peps,\ueps)$ with $\veps = (\veps^+,\veps^M, \veps^-)$ and $\peps = (\peps^+ , \peps^M , \peps^-)$ is a weak solution of the microscopic problem $\eqref{MicroscopicModel}$ if 
\begin{align*}
    \veps &\in L^2((0,T),H^1(\oef,\partial_D \oef))^3 \cap H^1((0,T),L^2(\oef))^3,
    \\
    \peps &\in L^2((0,T)\times \oef),
    \\
    \ueps &\in H^1((0,T),H^1(\oems,\partial_D \oems ))^3 \cap H^2((0,T),L^2(\oems))^3
\end{align*}
with $\nabla \cdot \veps = 0$ and $\veps^M = \partial_t \ueps $ on $\geps$ and for all $\phieps \in H^1(\Omega, \partial_D\Omega)^3$ it holds almost everywhere in $(0,T)$ 
\begin{align}
\begin{aligned}
\label{eq:Var_Micro_veps}
\sum_{\pm}& \left\{ \int_{\oeps^{\pm}} \partial_t \veps^{\pm} \cdot \phieps dx +  \int_{\oeps^{\pm}}  D( \veps^{\pm}) : D( \phieps) dx - \int_{\oeps^{\pm}} \peps^{\pm} \nabla \cdot \phieps dx\right\}
+  \int_{\oemf} \partial_t \veps^M \cdot \phieps dx 
\\
+& \vareps \int_{\oemf} D( \veps^M) : D( \phieps) dx 
 -\foe  \int_{\oemf} \peps^M \nabla \cdot \phieps dx 
\\
+& \frac{1}{\vareps^{\gamma}} \int_{\oems} A_{\vareps} D(\ueps): D(\phieps) dx 
=\sum_{\pm} \int_{\oeps^{\pm}} f_{\vareps}^{\pm} \cdot \phieps dx.
\end{aligned}
\end{align}
Further, the initial conditions $\veps(0) = 0$ and $\ueps(0) = \partial_t \ueps (0) = 0$ hold.
\end{definition}

\section{Main results}
\label{sec:main_results}

In the following, we give the macroscopic model for the cases $\gamma = 1$ and $\gamma = 3$ and formulate the main results of the paper. We will show that the microscopic solutions $(\veps,\peps)$ and $\ueps$  converge in a suitable sense to limit functions $(v_0,p_0)$ and $(u_0,u_1)$, respectively. For the fluid velocity and the fluid pressure $(v_0,p_0)$ we obtain in both cases $\gamma = 1$ and $\gamma = 3$ an incompressible Stokes-system in the limit, with effective interface conditions across $\Sigma$. These are the continuity of the normal velocity, a Navier-slip-type boundary condition for the tangential normal stress, and a jump condition for the normal component of the normal stress. Furthermore, for $\gamma = 1$ it turns out that the normal component of the normal stress is continuous.
\\
The macroscopic system for the displacement depends highly on the choice of $\gamma$. More precisely, for $\gamma = 1$ we obtain a membrane equation, while for $\gamma = 3$ the evolution of the limit displacement is given by a Kirchhoff-Love plate equation. In both cases the problem is coupled to the fluid system via force terms.

\subsection{The case $\gamma  = 1$}

We start with the formulation of the macroscopic fluid problem, which is given by an instationary Stokes equation. More precisely, the limit functions $v_0^{\pm}: (0,T)\times \Omega^{\pm} \rightarrow \R^3$ and $p_0^{\pm}:(0,T)\times \Omega^{\pm} \rightarrow \R$ are solutions of the problem:
\begin{subequations}\label{def:Macro_Stokes_model_strong}
\begin{align}
\label{def:Macro_Stokes_model_PDE}
    \partial_t v_0^{\pm} - \nabla\cdot D(v_0^{\pm}) + \nabla p_0^{\pm} &= f_0^{\pm} &\mbox{ in }& (0,T)\times \Omega^{\pm},
\\
\nabla \cdot v_0^{\pm} &= 0 &\mbox{ in }& (0,T)\times \Omega^{\pm},
\\
v_0^{\pm} &= 0 &\mbox{ on }& (0,T)\times \partial_D \Omega^{\pm},
\\
[v_0^+]_3 &= [v_0^-]_3 &\mbox{ on }& (0,T)\times \Sigma,
\\
-(D(v_0^{\pm}) - p_0^{\pm}I)\nu &= 0 &\mbox{ on }& (0,T)\times \partial_N\Omega^\pm,
\\
\label{Macro_Model_Fluid_Stress_normal}
-\llbracket (D (v_0) - p_0 I ) \nu\cdot \nu\rrbracket &=
F_1(\partial_t u_0,v_0^+,v_0^-)
&\mbox{ on }& (0,T)\times \Sigma,
\\
\label{Macro_Model_Fluid_Stress_tangential}
- [(D(v_0^{\pm}) - p_0^{\pm}I)\nu^{\pm}]_t &= [L^{\pm} \partial_t u_0]_t +[K^{\pm}v_0^{\pm}]_t + M^{\mp}v_0^{\mp} &\mbox{ on }& (0,T)\times \Sigma,
\\
v_0^{\pm}(0) &= 0 &\mbox{ in }& \Omega^{\pm}.
\end{align}
\end{subequations}
where 
\begin{align} \label{RHS_gamma=1}
F_1(\partial_t u_0,v_0^+,v_0^-) := L^+ \partial_t u_0 \cdot \nu^+ - L^- \partial_t u_0 \cdot \nu^- +  K^+v_0^+ \cdot \nu^+ -K^-v_0^- \cdot \nu^- =0,
\end{align}
for almost every $(t,x) \in (0, T) \times \Sigma$. 
Here, $\llbracket \phi\rrbracket := \phi^+ - \phi^-$ on $\Sigma$ denotes the jump across $\Sigma$ and $[\cdot]_t$ the tangential part of a vector field. The effective coefficients $K^\pm \in \R^{3\times 3}, M^\pm \in \R^{3\times 3}$, and  $L^\pm \in \R^{3\times 3}$ are defined in \eqref{Def:K}, \eqref{Def:M}, and \eqref{Def:L} respectively, in Section \ref{sec:limit_problem_gamma1}, and are given via suitable Stokes-cell problems.
We call the tuple $(v_0^\pm,p_0^\pm)$ a weak solution of $\eqref{def:Macro_Stokes_model_strong}$, if 
\begin{align*}
    v_0^{\pm} &\in L^2((0,T),H^1(\Omega^{\pm},\partial_D \Omega^{\pm}))^3 \cap H^1((0,T),L^2(\Omega^{\pm}))^3, \mbox{ and }
    p_0^{\pm} \in L^2((0,T)\times \Omega^{\pm})
\end{align*}
with $[v_0^+]_3 = [v_0^-]_3$ on $(0,T)\times \Sigma$,  $\nabla \cdot v_0^{\pm} = 0$, and for all $(\phi^+,\phi^-) \in H^1(\Omega^+,\partial_D \Omega^+)^3 \times H^1(\Omega^-,\partial_D \Omega^-)^3$ with $\phi^+_3 = \phi^-_3$ on $\Sigma$ it holds almost everywhere in $(0,T)$ 
\begin{align}
\begin{aligned}\label{eq:var_macro_fluid}
&\sum_{\pm}  \left\{ \int_{\Omega^{\pm}} \partial_t v_0^{\pm} \cdot \phi^{\pm} dx +\int_{\Omega^{\pm}} D(v_0^{\pm}) : D(\phi^{\pm}) dx -  \int_{\Omega^{\pm}} p_0^{\pm} \nabla \cdot \phi^{\pm} dx   \right\} 
\\
&+\sum_{\pm} \int_{\Sigma} [L^{\pm}\partial_t u_0]_t \cdot \phi^{\pm} d\x+ \sum_{\pm} \int_{\Sigma} [K^{\pm} v_0^{\pm}]_t \cdot \phi^{\pm} d\x + \int_{\Sigma} M^{-}v_0^- \cdot \phi^+ + M^{+}v_0^+ \cdot \phi^- d\x 
\\
&= \sum_{\pm} \int_{\Omega^{\pm}} f^{\pm}_0 \cdot \phi^{\pm} dx,
\end{aligned}
\end{align}
together with the initial condition $v_0^{\pm}(0) = 0$.

The weak equation $\eqref{eq:var_macro_fluid}$ is obtained with an elemental calculation by formally multiplying $\eqref{def:Macro_Stokes_model_PDE}$ with suitable test functions, integrating by parts, and decomposing the normal stress into its normal and tangential part and using the interface conditions in $\eqref{def:Macro_Stokes_model_strong}$. 

The macro-model for the limit displacement $u_0$ for $\gamma = 1$ reads as follows: With $\hat{u}_0= (u_0^1,u_0^2)$ we have that the limit function $u_0:(0,T)\times \Sigma \to \R^3$ is a solution of the problem:
\begin{subequations}\label{eq:macro_model_displacement_gamma1}
\begin{align}
\label{eq:macro_model_displ_gamma1_PDE}
L^{\Gamma} \partial_t u_0 - \nabla_{\x} \cdot (A^{\ast} D_{\x}(\hat{u}_0)) &=- \sum_{\pm} [L^{\pm}]^T v_0^{\pm}  &\mbox{ in }& (0,T) \times \Sigma,
\\
\hat{u}_0 &= 0 &\mbox{ on }& (0,T)\times \partial \Sigma,
\\
u_0(0) &= 0 &\mbox{ in }& \Sigma,
\end{align}
\end{subequations}
with the effective tensor $L^{\Gamma} \in \R^{3\times 3}$ defined in \eqref{Def:LGamma}, see Section \ref{sec:limit_problem_gamma1}, and given via a suitable Stokes-cell problem. Furthermore, the effective elasticity tensor $A^*$ is defined in \eqref{def:effective_elasticity_tensor} by means of suitable elasticity-cell problems.

We say that $u_0$ is a weak solution of the problem $\eqref{eq:macro_model_displacement_gamma1}$ if 
\begin{align*}
  u_0 \in H^1((0,T), L^2(\Sigma))^3 \cap L^2((0,T),H^1_0(\Sigma)^2 \times L^2(\Sigma))  
\end{align*}
and for all $\psi \in H_0^1(\Sigma)^2 \times L^2(\Sigma)$ it holds almost everywhere in $(0,T)$ that
\begin{align}\label{eq:var_macro_model_displ_gamma1}
    \int_{\Sigma} L^{\Gamma} \partial_t u_0 \cdot \psi d\x + \int_{\Sigma} A^{\ast} D_{\x}(\hat{u}_0) : D_{\x}(\hat{\psi}) d\x + \int_{\Sigma} \sum_{\pm} L^{\pm} \psi  \cdot v_0^{\pm} d\x = 0,
\end{align}
together with the initial condition $u_0(0) = 0$.

\begin{remark}\label{rem:macro_model_displ_gamma1}\
\begin{enumerate}[label = (\roman*)]
    \item The third component of $\nabla_{\x} \cdot (A^{\ast} D_{\x}(\hat{u}_0))$ vanishes and therefore the third component in the equation $\eqref{eq:macro_model_displ_gamma1_PDE}$ is an ODE.
    \item It holds that (see proof of \eqref{eq:interface_condition_normal_stress} in Section \ref{sec:limit_problem_gamma1})
    \begin{align*}
        -\nabla_{\x} \cdot (A^{\ast} D_{\x}(\hat{u}_0)) = -\llbracket (-D(v_0) + p_0 I)\nu \rrbracket \qquad\mbox{on } (0,T)\times \Sigma.
    \end{align*} 
    In particular, the jump across $\Sigma$ in the third (normal) component of the effective normal stress for the bulk fluid is zero.
\end{enumerate}
\end{remark}

The main result for the case $\gamma = 1$ reads as follows (we refer to Section \ref{sec:two_scale_convergence} in the appendix for the definition of the two-scale convergence $\rats$):

\begin{theorem}\label{thm:main_gamma1}
Let $\gamma = 1$ and let $(\veps,\peps,\ueps)$ be the weak solution of the microscopic model $\eqref{MicroscopicModel}$.  Then, there exist 
\begin{align*}
 v_0^{\pm} &\in L^2((0,T),H^1(\Omega^{\pm})^3 \cap H^1((0,T),L^2(\Omega^{\pm}))^3,
    \\
    p_0^{\pm} &\in L^2((0,T)\times \Omega^{\pm}),
    \\
    u_0 &\in H^1((0,T),H^1_0(\Sigma)^2 \times L^2(\Sigma))
\end{align*}
such that up to a subsequence 
\begin{align*}
     \chi_{\oeps^{\pm}} \veps^{\pm} &\rightarrow v_0^{\pm} &\mbox{ in }& L^2((0,T)\times \Omega^{\pm})^3,
     \\
    \chi_{\oeps^{\pm}}\peps^{\pm} &\rightharpoonup p_0^{\pm} &\mbox{ weakly in }& L^2((0,T)\times \Omega^{\pm}),
    \\
    \chi_{\oems} \ueps &\rats \chi_{Z_s} u_0.
\end{align*}
The limit function $(v_0,p_0,u_0)$ with $v_0:=(v_0^+,v_0^-)$ and $p_0 := (p_0^+,p_0^-)$ is  the unique weak solution of the macroscopic problem  $\eqref{def:Macro_Stokes_model_strong}  \, + \, \eqref{eq:macro_model_displacement_gamma1}$.
\end{theorem}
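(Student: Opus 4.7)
The plan is to follow the standard homogenization roadmap (a priori estimates $\to$ compactness $\to$ passage to the limit $\to$ uniqueness), while the hard work is concentrated in the construction of oscillating test functions via cell problems, which is what produces the effective coefficients $K^{\pm}, M^{\pm}, L^{\pm}, L^{\Gamma}, A^{\ast}$ and the coupled interface system.

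First I would invoke the a priori estimates of Section \ref{sec:apriori}. Testing \eqref{eq:Var_Micro_veps} against $\phieps = \veps$ (extended in a divergence-preserving way through $\oems$ via $\partial_t \ueps$) yields the basic energy identity, from which one controls $\|\partial_t \veps\|_{L^2}$, $\|D(\veps)\|_{L^2}$, $\vareps^{-1/2}\|\partial_{tt}\ueps\|_{L^2(\oems)}$ and $\vareps^{-1/2}\|D(\ueps)\|_{L^2(\oems)}$ (note the factor $\vareps^{-\gamma}=\vareps^{-1}$ in the stress term). The pressure is estimated by a Bogovski-type argument, separately on the bulk and on the layer, and $\veps^M$ is controlled via its trace on $\sepm$ coming from the bulk. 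Passing to the limit along a subsequence, Korn inequalities and Aubin--Lions deliver: $\chi_{\oeps^{\pm}}\veps^{\pm}\to v_0^{\pm}$ strongly in $L^2$, $\chi_{\oeps^{\pm}}\peps^{\pm}\rightharpoonup p_0^{\pm}$ weakly in $L^2$, and $\chi_{\oems}\ueps \rats \chi_{Z_s}u_0$ via the new two-scale compactness result for the symmetric gradient (Section \ref{sec:compactness_results}); in particular $\hat{u}_0\in H^1_0(\Sigma)^2$ and $u_0\in H^1((0,T),L^2(\Sigma))^3$. The incompressibility and interface conditions $\nabla\cdot v_0^{\pm}=0$, $[v_0^+]_3=[v_0^-]_3$ on $\Sigma$ follow from the divergence-free condition $\nabla\cdot\veps=0$ together with standard trace arguments in thin layers.

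The core step is the passage to the limit in \eqref{eq:Var_Micro_veps}. The bulk and pressure terms pass to their natural limits; the nontrivial contributions come from the layer terms $\vareps\!\int_{\oemf}\!D(\veps^M):D(\phieps)$, $\vareps^{-1}\!\int_{\oemf}\!\peps^M\nabla\cdot\phieps$ and $\vareps^{-1}\!\int_{\oems}A_\vareps D(\ueps):D(\phieps)$. I would take test functions of the form $\phieps(t,x)=\phi^{\pm}(t,x)$ in the bulk, glued to a layer test function $\vareps\,\theta(t,\x)W(x/\vareps)+\psi(t,\x)$ (with $W$ the solution of a Stokes cell problem on $Z$, and $\psi$ a profile that is either of bulk type or constant on $Z_s$). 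Because $Z_s$ does not touch $S^{\pm}$, test functions that are constants on $\oems$ are admissible, which is precisely what couples the elasticity to the fluid system and produces the terms $L^{\pm}\partial_t u_0$ in the Navier-slip condition \eqref{Macro_Model_Fluid_Stress_tangential}. Using $W$ from the cell problem (defining $K^{\pm}, M^{\pm}, L^{\pm}$ via the formulas \eqref{Def:K}, \eqref{Def:M}, \eqref{Def:L}), and applying two-scale convergence on the oscillating symmetric gradient, one obtains the variational identity \eqref{eq:var_macro_fluid}. The hidden identity \eqref{RHS_gamma=1} ($F_1=0$) emerges from testing with $\phi^{\pm}$ normal to $\Sigma$ satisfying $\phi^+_3=\phi^-_3$, which forces the jump $\llbracket(D(v_0)-p_0 I)\nu\cdot\nu\rrbracket$ to be balanced by the effective elastic contribution.

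The macroscopic elasticity equation \eqref{eq:var_macro_model_displ_gamma1} is obtained by choosing in \eqref{eq:Var_Micro_veps} test functions of the form $\phieps(x)=\psi(\x)+\vareps\Theta(\x,x/\vareps)$ with $\psi\in H_0^1(\Sigma)^2\times L^2(\Sigma)$ and $\Theta$ built from the elasticity cell problems defining $A^{\ast}$ (see \eqref{def:effective_elasticity_tensor}), extended by $\psi$ in $\oems$ and by a divergence-free correction in $\oemf$ (for which a Stokes-type cell problem fixes $L^{\Gamma}$, see \eqref{Def:LGamma}). The $\vareps^{-1}$ scaling of the elastic term then compensates for the smallness of $D(\Theta)$ and yields in the limit the term $\int_{\Sigma} A^{\ast} D_{\x}(\hat{u}_0):D_{\x}(\hat\psi)\,d\x$, while the coupling terms $\int_{\Sigma}L^{\Gamma}\partial_t u_0\cdot\psi + \sum_{\pm} L^{\pm}\psi\cdot v_0^{\pm}\,d\x$ arise from the layer fluid integrals via continuity of velocity on $\geps$ ($\veps^M=\partial_t\ueps$) and two-scale convergence. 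I expect the main obstacle to be precisely the construction of these admissible test functions: they must be globally $H^1$, respect divergence-freeness in the fluid, match $\partial_t\ueps$-type data on $\geps$ at the discrete level, and produce the right effective tensors after taking two-scale limits. This requires careful gluing between bulk profiles and cell-problem correctors, exploiting $\overline{Z_s}\cap S^{\pm}=\emptyset$ to obtain the extra degrees of freedom.

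Finally, uniqueness of $(v_0,p_0,u_0)$ follows by a standard energy argument applied to the difference of two solutions in the coupled variational system \eqref{eq:var_macro_fluid}$+$\eqref{eq:var_macro_model_displ_gamma1}: testing with $(v_0,\partial_t u_0)$ and using the symmetry of the coupling matrices $L^{\pm}$ combined with the coercivity of $A^{\ast}$ (inherited from \ref{AssumptionElasticityTensor}) and Korn's inequality in $\Omega^{\pm}$ yields $v_0=0$, $u_0=0$, and then $p_0=0$ by Ne\v{c}as' inequality. Uniqueness also ensures that the whole sequence, not only a subsequence, converges.
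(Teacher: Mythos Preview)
Your roadmap (estimates $\to$ compactness $\to$ limit passage $\to$ uniqueness) matches the paper, and the identification of the key difficulty---building admissible oscillating test functions that are constant on $Z_s$---is exactly right. However, the paper organizes the limit passage differently, and two of your mechanisms are not quite the ones that actually do the work.

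First, rather than plugging separate oscillating test functions into the microscopic equation \eqref{eq:Var_Micro_veps} for the fluid and for the elasticity parts, the paper passes once to a single intermediate two-scale equation \eqref{eq:two_scale_model_fluid} posed on the space $\spaceV$ (functions whose layer component is constant in $y$ on $Z_s$), with a corrector pressure $p_1^M$ obtained by a Bogovskii/closed-range argument. Both macroscopic equations are then read off from \eqref{eq:two_scale_model_fluid} via the explicit decomposition \eqref{eq:decomposition_phiM}: taking $\phi^M=\sum_\alpha\sum_i \phi_i^\alpha q_i^\alpha$ (zero on $Z_s$) yields the fluid system, while taking $\phi^{\pm}=0$ and $\phi^M=\sum_i\psi_i q_i^{\Gamma}$ yields the elasticity system. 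In particular the test function producing \eqref{eq:var_macro_model_displ_gamma1} is built from the \emph{Stokes} cell solutions $q_i^{\Gamma}$, not from elasticity correctors $\psi+\vareps\Theta$ as you propose; the elasticity cell solutions $\chi_{ij}$ enter only through the representation of the corrector $u_1$ (Proposition \ref{prop:representation_u1}). Your direct corrector approach can be made to work, but the paper's route avoids having to extend $\psi$ nontrivially into the bulk and cleanly separates the $L^{\Gamma}$, $L^{\pm}$ contributions from the $A^{\ast}$ contribution.

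Second, two specific claims need adjusting. The identity $F_1=0$ in \eqref{RHS_gamma=1} does not just ``emerge'' from testing with normal $\phi^{\pm}$; it is an algebraic consequence of the relations $q_i^{\Gamma}+q_i^{+}+q_i^{-}=e_i$ (Remark \ref{rem:linear_comb_q_i}), which give $L^{+}+L^{-}+L^{\Gamma}=0$ and $L^{\alpha}+\sum_{\beta}[B^{\alpha,\beta}]^T=0$ (Lemma \ref{lem:relation_effective_coefficients_L}), combined with Corollary \ref{cor:LGamma_u_0_Lpm_v_0} extracted from the third component of the elasticity equation. And for uniqueness, the coupling matrices $L^{\pm}$ are \emph{not} symmetric; what is used instead is the semi-positivity of the full quadratic form $L^{\Gamma}\xi^{\Gamma}\!\cdot\!\xi^{\Gamma}+2\sum_\alpha L^{\alpha}\xi^{\Gamma}\!\cdot\!\xi^{\alpha}+\sum_{\alpha,\beta}B^{\alpha,\beta}\xi^{\alpha}\!\cdot\!\xi^{\beta}\ge 0$ (Lemma \ref{lem:Boundary_positive_semidefinit}), which is precisely $\|D_y(\xi^M)\|_{L^2(Z_f)}^2$ for the associated cell profile. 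Testing the difference of two solutions with $(\delta v_0,\partial_t\delta u_0)$ and invoking this semi-positivity, together with coercivity of $A^{\ast}$, closes the energy estimate.
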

The proof of this result is given in Section \ref{sec:compactness_results} and \ref{sec:limit_problem_gamma1}. More precisely, see Section \ref{sec:compactness_results} for the compactness results and additional convergence results, also including traces, time- and space-derivatives, as well as additional corrector functions. The macroscopic model is derived in Section \ref{sec:limit_problem_gamma1}.

\subsection{The case $\gamma  = 3$}

Here, the limit fluid model has the same structure as in the case $\gamma  =1$. The macroscopic equation for the displacement is now given by a Kirchhoff-Love plate equation, where the in-plane displacement $\hat{u}_1 = (u_1^1,u_1^2)$ is given as the solution of a second order equation, and the out-of-plane displacement $u_0^3$ by a forth order equation. First, we formulate the fluid problem: The limit functions $(v_0^{\pm},p_0^{\pm})$ solve again a Stokes model, where the only difference compared to the case $\gamma = 1$ lies in the force terms for the effective stress conditions across the interface $\Sigma$, depending only on the out-of-plane displacement $u_0^3$ (and the velocities $v_0^{\pm}$ on $\Sigma$).   We have
\begin{subequations}\label{def:Macro_Stokes_model_strong_gamma3}
\begin{align}
\label{def:Macro_Stokes_model_PDE_gamma3}
    \partial_t v_0^{\pm} - \nabla\cdot D(v_0^{\pm}) + \nabla p_0^{\pm} &= f_0^{\pm} &\mbox{ in }& (0,T)\times \Omega^{\pm},
\\
\nabla \cdot v_0^{\pm} &= 0 &\mbox{ in }& (0,T)\times \Omega^{\pm},
\\
v_0^{\pm} &= 0 &\mbox{ on }& (0,T)\times \partial_D \Omega^{\pm},
\\
[v_0^+]_3 &= [v_0^-]_3 &\mbox{ on }& (0,T)\times \Sigma,
\\
-(D(v_0^{\pm}) - p_0^{\pm}I)\nu &= 0 &\mbox{ on }& (0,T)\times \partial_N\Omega^\pm,
\\
\label{Macro_Model_Fluid_Stress_normal_gamma3}
-\llbracket (D (v_0) - p_0 I ) \nu\cdot \nu\rrbracket &= F_3(\partial_t u_0^3, v_0^+, v_0^-)
&\mbox{ on }& (0,T)\times \Sigma,
\\
\label{Macro_Model_Fluid_Stress_tangential_gamma3}
- [(D(v_0^{\pm}) - p_0^{\pm}I)\nu^{\pm}]_t &= \partial_t u_0^3[L^{\pm} e_3]_t +[K^{\pm}v_0^{\pm}]_t + M^{\mp}v_0^{\mp} &\mbox{ on }& (0,T)\times \Sigma,
\\
v_0^{\pm}(0) &= 0 &\mbox{ in }& \Omega^{\pm},
\end{align}
\end{subequations}
with 
\begin{align} \label{Def:RHS_gamma=3}
F_3(\partial_t u_0^3, v_0^+, v_0^-)= -L^{\Gamma}_{33} \partial_t u_0^3 + K^+ v_0^+ \cdot \nu^+  - K^- v_0^- \cdot \nu^-,
\end{align}
for almost every $(t,x) \in (0, T) \times \Sigma$. For the definition of the effective coefficients $L^{\pm}$, $K^{\pm}$, $M^{\pm}$ and $L^{\Gamma}$ we refer again to Section \ref{sec:limit_problem_gamma1}. The weak formulation for this problem is the following: We call the tuple $(v_0^\pm,p_0^\pm)$ a weak solution of \eqref{def:Macro_Stokes_model_strong_gamma3}, if  
\begin{align*}
    v_0^{\pm} &\in L^2((0,T),H^1(\Omega^{\pm},\partial_D \Omega^{\pm}))^3 \cap H^1((0,T),L^2(\Omega^{\pm}))^3, \mbox{ and }
    p_0^{\pm} \in L^2((0,T)\times \Omega^{\pm})
\end{align*}
with $[v_0^+]_3 = [v_0^-]_3$ on $(0,T)\times \Sigma$,  $\nabla \cdot v_0^{\pm} = 0$, and for all $(\phi^+,\phi^-) \in H^1(\Omega^+,\partial_D \Omega^+)^3 \times H^1(\Omega^-,\partial_D \Omega^-)^3$ with $\phi^+_3 = \phi^-_3$ on $\Sigma$ it holds almost everywhere in $(0,T)$ 
\begin{align}
\begin{aligned}\label{eq:var_macro_fluid_gamma3}
\sum_{\pm} & \left\{ \int_{\Omega^{\pm}} \partial_t v_0^{\pm} \cdot \phi^{\pm} dx +\int_{\Omega^{\pm}} D(v_0^{\pm}) : D(\phi^{\pm}) dx -  \int_{\Omega^{\pm}} p_0^{\pm} \nabla \cdot \phi^{\pm} dx   \right\} 
\\
& +\sum_{\pm}\int_\Sigma L^{\pm} \partial_t u_0 \cdot \phi^{\pm} d\x + \sum_{\pm} \int_{\Sigma} K^{\pm} v_0^{\pm} \cdot \phi^{\pm} d\x + \int_{\Sigma} M^{-}v_0^- \cdot \phi^+ + M^{+}v_0^+ \cdot \phi^- d\x \\
&=  \sum_{\pm} \int_{\Omega^{\pm}} f^{\pm}_0 \cdot \phi^{\pm} dx  
\end{aligned}
\end{align}
together with the initial condition $v_0^{\pm}(0) = 0$ (see also Corollary \ref{cor:LGamma_u_0_Lpm_v_0} for a relation between $L^{\pm}$ and $L^{\Gamma}$).
\\

Next, we formulate the Kirchhoff-Love plate equation for the displacements $u_0^3$ and $\hat{u}_1=(u_1^1,u_1^2)$:
\begin{subequations}\label{eq:macro_model_displacement_gamma3}
\begin{align}
-\nabla_{\x} \cdot \left(a^{\ast} D_{\x}(\hat{u}_1) + b^{\ast} \nabla_{\x}^2 u_0^3 \right) &= 0 &\mbox{ in }& (0,T)\times \Sigma,
\\
 \nabla_{\x}^2 : \left(b^{\ast} D_{\x}(\hat{u}_1) + c^{\ast}\nabla_{\x}^2 u_0^3 \right) &=  -L^{\Gamma}_{33} \partial_t u_0^3 + K^+ v_0^+ \cdot \nu^+  - K^- v_0^- \cdot \nu^- &\mbox{ in }& (0,T)\times \Sigma,
\\
u_0^3 = \nabla_{\x} u_0^3 \cdot \nu & = 0&\mbox{ on }& (0,T) \times \partial \Sigma,
\\
\hat{u}_1 &= 0 &\mbox{ on }& (0,T)\times \partial \Sigma,
\end{align}
\end{subequations}
The weak equation reads as follows: We say that $u_0^3$ and $\hat{u}_1$ is a weak solution of the macroscopic problem $\eqref{eq:macro_model_displacement_gamma3}$ if 
\begin{align*}
    u_0^3 \in L^2((0,T),H_0^2(\Sigma)), \qquad \hat{u}_1 \in L^2((0,T),H_0^1(\Sigma))^2,
\end{align*}
and for all $\psi_3 \in H_0^2(\Sigma)$ and $\hat{U}:=(U_1,U_2) \in H_0^1(\Sigma)^2$ it holds almost everywhere in $(0,T)$ that
\begin{align*}
 \int_{\Sigma} a^{\ast} D_{\x}(\hat{u}_1) : D_{\x}(\hat{U}) + b^{\ast} &\nabla_{\x}^2 u_0^3 : D_{\x}(\hat{U}) + b^{\ast} D_{\x}(\hat{u}_1) : \nabla_{\x}^2 \psi_3 + c^{\ast} \nabla_{\x}^2 u_0^3 : \nabla_{\x}^2 \psi_3 d\x 
 \\
 &= \int_{\Sigma} - L^{\Gamma}_{33} \partial_t u_0^3 \psi_3 + \left[K^+ v_0^+ \cdot \nu^+  - K^- v_0^- \cdot \nu^- \right] \psi_3 d\x.
\end{align*}

\begin{remark}
Comparing the macroscopic equation for $(v_0^{\pm},p_0^{\pm})$ and $(u_1,u_0^3)$, we obtain that the jump across $\Sigma$ of the normal component of the effective normal stress in the fluid is given by
\begin{align*}
\left\llbracket  (- D(v_0) + p_0 I )\nu \cdot \nu \right\rrbracket &=    -L^{\Gamma}_{33} \partial_t u_0^3 + K^+ v_0^+ \cdot \nu^+  - K^- v_0^- \cdot \nu^-
\\
&= \nabla_{\x}^2 : \left(b^{\ast} D_{\x}(\hat{u}_1) + c^{\ast}\nabla_{\x}^2 u_0^3 \right) .
\end{align*}
\end{remark}

Now, we formulate the main result for the case $\gamma = 3$:

\begin{theorem}\label{thm:main_gamma3}
Let $\gamma = 3$ and let $(\veps,\peps,\ueps)$ be the weak solution of the microscopic problem $\eqref{MicroscopicModel}$.  Then, there exist 
\begin{align*}
 v_0^{\pm} &\in L^2((0,T),H^1(\Omega^{\pm})^3 \cap H^1((0,T),L^2(\Omega^{\pm}))^3,
    \\
    p_0^{\pm} &\in L^2((0,T)\times \Omega^{\pm}),
    \\
     u_0^3 &\in H^1((0,T),H^2_0(\Sigma)),
    \\
    \hat{u}_1 &\in H^1((0,T),H^1_0(\Sigma))^2,
\end{align*}
such that up to a subsequence 
\begin{align*}
\chi_{\oeps^{\pm}} \veps^{\pm} &\rightarrow v_0^{\pm} &\mbox{ in }& L^2((0,T)\times \Omega^{\pm})^3,
     \\
    \chi_{\oeps^{\pm}}\peps^{\pm} &\rightharpoonup p_0^{\pm} &\mbox{ weakly in }& L^2((0,T)\times \Omega^{\pm}),
    \\
     \chi_{\oems}\ueps^3 &\rats \chi_{Z_s} u_0^3,
\\
\chi_{\oems}\frac{\ueps^{\alpha}}{\epsilon} &\rats \chi_{Z_s}\big(u_1^{\alpha} - y_3 \partial_{\alpha} u_0^3\big),
\end{align*}
where $\hat{u}_1 = (u_1^1, u_1^2)$ and $\alpha =1,2$.
The limit function $(v_0,p_0,u_0^3,\hat{u}_1)$ with $v_0:=(v_0^+,v_0^-)$ and $p_0 := (p_0^+,p_0^-)$ is  the unique weak solution of the macroscopic problem  $\eqref{def:Macro_Stokes_model_strong_gamma3}  \, + \, \eqref{eq:macro_model_displacement_gamma3}$.
\end{theorem}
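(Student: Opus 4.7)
The plan is to imitate the overall strategy of Theorem~\ref{thm:main_gamma1}, but adapted to the stronger scaling $\gamma=3$, which forces the Kirchhoff–Love structure on the displacement. First, I would collect the $\vareps$-uniform a priori estimates from Section~\ref{sec:apriori} applied with $\gamma=3$. These provide the usual bounds for $(\veps^\pm,\peps^\pm)$ in $L^2(H^1)\cap H^1(L^2)$ and $L^2(L^2)$ respectively, control $\veps^M$ inside the pores via the continuity across $\sepm$, and in particular give $\|\vareps^{-3/2} D(\ueps)\|_{L^2(\oems)}=O(1)$, which combined with the coupling $\veps^M = \partial_t\ueps$ on $\geps$ produces $\|\ueps^3\|_{L^2(\oems)}=O(\vareps^{1/2})$ and $\|\ueps^\alpha\|_{L^2(\oems)}=O(\vareps^{3/2})$.

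Next I would apply the two-scale compactness results of Section~\ref{sec:compactness_results}, extending them to the Kirchhoff–Love regime exactly as in \cite{GahnJaegerTwoScaleTools,griso2020homogenization}: one obtains the strong $L^2$-limit $v_0^{\pm}$ for the bulk velocities, the weak pressure limits $p_0^{\pm}$, the two-scale limit $\chi_{Z_s}u_0^3$ for $\ueps^3$, and the key identification
\[
\chi_{\oems}\frac{\ueps^\alpha}{\vareps}\rats\chi_{Z_s}\bigl(u_1^\alpha-y_3\,\partial_\alpha u_0^3\bigr),\qquad \alpha=1,2,
\]
together with the two-scale limits of the symmetric gradient and of the fluid velocity in the membrane (including a corrector of the type $\partial_t u_0^3 e_3$ on $Z_f$, inherited through continuity of $\veps^M=\partial_t\ueps$ on $\geps$). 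Trace limits on $\sepm$ and the time-derivatives are obtained via the compactness machinery of Section~\ref{sec:compactness_results}.

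The core work, as in the case $\gamma=1$, is the construction of admissible test functions $\phieps\in H^1(\Omega,\partial_D\Omega)^3$ for \eqref{eq:Var_Micro_veps}. In the bulk I take $\phieps=\phi^\pm$ with $\phi_3^+=\phi_3^-$ on $\Sigma$; in the membrane I write $\phieps(x)=\Phi(\bar x)+\vareps\,W^\Phi\bigl(\bar x,\tfrac{x}{\vareps}\bigr)$ where $W^\Phi$ solves the Stokes cell problems in $Z_f$ that define the coefficients $K^\pm,L^\pm,M^\pm,L^\Gamma$ from Section~\ref{sec:limit_problem_gamma1}. In $\oems$ I choose $\phieps$ of Kirchhoff–Love form $\Phi_{\mathrm{KL}}=(\psi^1-y_3\partial_1\psi_3,\psi^2-y_3\partial_2\psi_3,\psi_3)$ with $\hat\psi\in H^1_0(\Sigma)^2$ and $\psi_3\in H^2_0(\Sigma)$, corrected by solutions to the elasticity cell problems that yield the tensors $a^*,b^*,c^*$ in \eqref{eq:macro_model_displacement_gamma3}. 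Inserting this ansatz into \eqref{eq:Var_Micro_veps}, passing to the limit term by term using the two-scale convergences above, and using that the cell problems eliminate the oscillating $y$-dependence, produces the variational identities \eqref{eq:var_macro_fluid_gamma3} and the weak form of \eqref{eq:macro_model_displacement_gamma3}. The relation \eqref{Def:RHS_gamma=3} for the jump of the normal stress is read off by comparing the fluid and plate equations, exactly as in Remark~\ref{rem:macro_model_displ_gamma1}(ii) for $\gamma=1$.

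The main obstacle I expect is two-fold. First, the term $\vareps^{-3}\!\int_{\oems}\!A_\vareps D(\ueps):D(\phieps)\,dx$ has to be handled with test functions scaled as $\vareps^{-1}\Phi_{\mathrm{KL}}+\text{correctors}$ so that the limit produces both the in-plane contribution $a^*D_{\bar x}(\hat u_1)+b^*\nabla_{\bar x}^2 u_0^3$ and the out-of-plane contribution $b^*D_{\bar x}(\hat u_1)+c^*\nabla_{\bar x}^2 u_0^3$; the compatibility of these scaled solid test functions with the fluid test functions through the interface coupling on $\geps$ and on $\sepm$ requires designing the correctors carefully so that $\phieps$ stays in $H^1(\Omega,\partial_D\Omega)^3$ (this is where the assumption \eqref{ZscapSpm} is used). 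Second, once the limit system is obtained, uniqueness must be proved to upgrade subsequential to full convergence: this follows by testing the difference of two solutions with the solutions themselves, integrating in time, and using the coercivity of $a^*,c^*$ on $H^1_0\times H^2_0$, the positivity of the symmetric parts of $K^\pm$ and $L^\Gamma$ (guaranteed by their cell-problem definitions), and a Gronwall argument that absorbs the coupling terms $L^{\pm}\partial_t u_0$ and $M^{\mp}v_0^{\mp}$.
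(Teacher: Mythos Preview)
Your overall strategy---a~priori estimates, Kirchhoff--Love two-scale compactness, test-function construction via cell problems, then uniqueness---matches the paper. But the decisive construction step is handled differently, and your description of it contains an imprecision that matters.

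The paper does \emph{not} try to build a single admissible test function covering bulk, fluid membrane, and solid membrane simultaneously. It decouples: for the macroscopic Stokes system it uses test functions in $\spaceH=\{\phi\in\spaceV:\phi^M|_{Z_s}=0\}$, i.e.\ vanishing in the solid, so that the $\vareps^{-3}$-scaled elastic term never appears. For the plate equation it takes $\phieps^{\pm}=0$ in the bulk and in the whole layer the single expression
\[
\phieps^M(x)=\psi_3(\x)\,q_3^{\Gamma}\!\left(\tfrac{x}{\vareps}\right)+\vareps\sum_{\alpha=1}^2\Bigl(U_\alpha(\x)-\tfrac{x_3}{\vareps}\partial_\alpha\psi_3(\x)\Bigr)q_\alpha^{\Gamma}\!\left(\tfrac{x}{\vareps}\right).
\]
Because $q_i^{\Gamma}=e_i$ on $\Gamma$ (hence on $Z_s$ after constant extension) and $q_i^{\Gamma}=0$ on $S^{\pm}$, this function is automatically $H^1$ across $\geps$ and matches the zero bulk test function on $S_\vareps^{\pm}$; no separate ``corrector design'' is needed to enforce compatibility. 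This is exactly the obstacle you flag but do not resolve, and the paper's resolution via the Stokes cell solutions $q_i^{\Gamma}$ is the main new idea.

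Two further points. First, the scaling is not ``$\vareps^{-1}\Phi_{\mathrm{KL}}$'': the out-of-plane component is $O(1)$ and the in-plane part carries an explicit prefactor $\vareps$, which yields $D(\phieps^M)=\vareps\bigl(D_{\x}(\hat U)-\tfrac{x_3}{\vareps}\nabla_{\x}^2\psi_3\bigr)$ in $\oems$ and makes the $\vareps^{-3}$ elastic integral converge after the usual $\vareps^{-1}$ layer normalisation. Second, the elasticity cell solutions $\chi_{ij},\chi_{ij}^B$ are \emph{not} inserted into the test function; they enter only through the representation of the corrector $u_2$ (the paper's Proposition on $u_2$), which is obtained separately by testing with $\vareps^2\phi(\x,x/\vareps)$. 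Your oscillating-test-function variant would also work in principle, but it is a different mechanism and would still need the $q_i^{\Gamma}$ trick (or an equivalent) to glue the solid test function to the fluid and bulk parts.
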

The proof of the compactness results are given in Section \ref{sec:compactness_results}, in particular including the convergence of the symmetric gradient of $\ueps$. The proof that $(v_0,p_0,u_0^3,\hat{u}_1)$ is a solution of the macroscopic model coupling fluid flow and plate equation can be found in Section \ref{sec:limit_problem_gamma3}.

\section{\textit{A priori} estimates}
\label{sec:apriori}

Existence of a unique weak solution $(\veps,\peps,\ueps)$ for the microscopic problem in the sense of Definition \ref{def:weak_solution_micro_model} is quite standard and can be obtained for example using the Galerkin-method. Also uniqueness is quite clear. Hence, in this section we focus on the derivation of $\vareps$-uniform \textit{a priori} estimates, which build the basis for the compactness results for the sequence of microscopic solutions. 
\begin{lemma}\label{lem:apriori_velocity_displacement}
The weak solution of $(\veps,\peps,\ueps)$ of the microscopic problem $\eqref{MicroscopicModel}$ fulfills for $\gamma \in \{1,3\}$ the following regularity and \textit{a priori} estimates
\begin{align*}
\|\partial_t \veps^{\pm}\|_{L^{\infty}((0,T),L^2(\oeps^{\pm}))} + \|\veps^{\pm}\|_{L^{\infty}((0,T),L^2(\oeps^{\pm}))} + \|\nabla \veps \|_{L^2((0,T) \times \oeps^{\pm})} &\le C,
\\
\|\partial_t \veps^M \|_{L^{\infty}((0,T),L^2(\oemf))} +   \frac{1}{\sqrt{\vareps}} \|\veps^M\|_{L^2((0,T)\times \oemf)} + \sqrt{\vareps} \|\nabla \veps^M\|_{L^2((0,T)\times \oemf)} &\le C,
\\
\|\partial_{tt} \ueps\|_{L^{\infty}((0,T),L^2(\oems))} + \frac{1}{\sqrt{\vareps}} \|\ueps\|_{H^1((0,T),L^2(\oems))} 
\\
+ \sqrt{\vareps} \|\nabla \ueps \|_{W^{1,\infty}((0,T),L^2(\oems))}  + \frac{1}{\vareps^{\frac{\gamma}{2}}} \|D(\ueps)\|_{W^{1,\infty}((0,T),L^2(\oems)) } &\le C.
\end{align*}
\end{lemma}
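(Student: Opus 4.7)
The plan is to derive the estimates by three energy arguments adapted to the coupled fluid-structure system and the thin-layer geometry: (i) a combined energy identity produced by a test function that matches the fluid velocity to the structural velocity across $\geps$; (ii) a time-differentiated energy identity yielding $L^\infty$-in-time bounds on $\partial_t\veps$ and $\partial_{tt}\ueps$, using that $f_\vareps$ is independent of time; and (iii) a refinement step based on thin-layer inequalities that produces the $\vareps$-scaled bounds on $\veps^M$ and on $\ueps$.

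For (i) I would test \eqref{eq:Var_Micro_veps} (together with the solid inertia term $\int_{\oems}\partial_{tt}\ueps\cdot\phieps$ coming from the solid momentum equation) with the time-dependent admissible function $\phieps(t)$ defined by $\veps(t)$ on $\oef$ and $\partial_t\ueps(t)$ on $\oems$: by the kinematic coupling $\veps^M=\partial_t\ueps$ on $\geps$ and the Dirichlet conditions, this lies in $H^1(\Omega,\partial_D\Omega)^3$. The pressure terms vanish by incompressibility, the viscous pairings give dissipation, and the kinetic and elastic terms become $\tfrac12\tfrac{d}{dt}$ of the respective energies by symmetry of $A_\vareps$. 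Integrating with zero initial data, bounding $\int f_\vareps\cdot\veps$ via assumption \ref{ass:rhs_feps_pm} and Young's inequality, and applying Gronwall yields
\begin{align*}
&\|\veps\|_{L^\infty(L^2(\oef))}^2+\|\partial_t\ueps\|_{L^\infty(L^2(\oems))}^2+\vareps^{-\gamma}\|D(\ueps)\|_{L^\infty(L^2(\oems))}^2 \\
&\qquad +\|D(\veps^\pm)\|_{L^2(L^2)}^2+\vareps\|D(\veps^M)\|_{L^2(L^2)}^2\le C,
\end{align*}
after which Korn--Poincaré on each $\oeps^\pm$ (using the Dirichlet trace on $\partial_D\oeps^\pm$) produces $\|\nabla\veps^\pm\|_{L^2(L^2)}\le C$. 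For (ii), a formal time-differentiation (justified by Galerkin regularisation) and testing with $(\partial_t\veps,\partial_{tt}\ueps)$ give the analogous identity with vanishing right-hand side, since $\partial_t f_\vareps\equiv 0$. The required initial values come from inserting $t=0$ into the strong equations: $\partial_{tt}\ueps(0)=0$ since $D(\ueps(0))=0$, and $\|\partial_t\veps(0)\|_{L^2(\oef)}\le\|f_\vareps\|_{L^2}\le C$ by testing the momentum equation at $t=0$ with the divergence-free field $\partial_t\veps(0)$ itself, so that pressure and viscous terms drop. This delivers the $L^\infty$-bounds on $\partial_t\veps^\pm$, on $\partial_{tt}\ueps$, and on $\vareps^{-\gamma/2}\|D(\partial_t\ueps)\|_{L^\infty(L^2)}$, completing the $W^{1,\infty}$-norm of $D(\ueps)$.

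The refinement step (iii) is where the main technical difficulty lies, and implements the strategy announced in the introduction. For $\veps^M$ I would introduce an affine-in-$x_3$ extension $V_\vareps$ of the bulk traces $\veps^\pm|_{S_\vareps^\pm}$ into $\oemf$; the standard trace-scaling for a layer of width $\vareps$ gives $\|V_\vareps\|_{L^2(\oemf)}\le C\sqrt{\vareps}$, and a thin-layer Poincaré inequality applied to $\veps^M-V_\vareps$, together with a similar lifting of $\partial_t\ueps|_{\geps}$, produces $\|\veps^M\|_{L^2(L^2(\oemf))}\le C\sqrt{\vareps}$. For $\ueps$, since $\ueps(0)=0$ we have $\ueps=\int_0^t\partial_t\ueps\,ds$, so it suffices to bound $\partial_t\ueps$ in $L^2(L^2(\oems))$ by $C\sqrt{\vareps}$; this follows from the coupling $\partial_t\ueps=\veps^M$ on $\geps$ and $\partial_t\ueps=0$ on $\partial_D\oeps^{M,s}$, combined with a thin-layer trace estimate for $\veps^M|_{\geps}$ (which turns the refined bound on $\veps^M$ into an $\vareps$-independent trace bound) and an anisotropic Korn--Poincaré inequality on the periodic solid phase, as provided in Appendix~A. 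The core difficulty is calibrating these thin-layer extensions and cell-by-cell Korn constants so that the $\vareps$-factors coming from the $O(\vareps)$-volume of $\oem$ and from the trace scalings on $\geps$ and $S_\vareps^\pm$ combine to the correct powers; the basic energy estimate alone loses a factor of $\sqrt{\vareps}$ at each such interface, and recovering it is precisely what the refinement exploits.
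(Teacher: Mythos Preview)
Your strategy is correct and closely matches the paper's proof. Parts (i) and (ii) are essentially identical to the paper's Step~1 (energy identity plus time-differentiated energy); your treatment of the initial data $\partial_t\veps(0)$ for the differentiated system is in fact more explicit than the paper's.

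For the refinement (iii), both you and the paper exploit the coupling $\veps^M=\partial_t\ueps$ on $\geps$, but the implementations differ. The paper subtracts from $\veps^M$ a \emph{single} extension $\partial_t\tilde u_\vareps := E_\vareps\partial_t\ueps$ of the solid velocity into $\oemf$ (via the operator of Lemma~\ref{lem:Extension_operators}), so that $\weps^M:=\veps^M-\partial_t\tilde u_\vareps$ vanishes on $\geps$; Korn's inequality (Lemma~\ref{lem:Korn_inequality}) then yields the full-gradient bound $\sqrt{\vareps}\,\|\nabla\veps^M\|_{L^2}\le C$, after which Lemma~\ref{lem:estimate_L2_membrane_Bulk_Gradient} converts this, together with the already-established $H^1$-bound on $\veps^\pm$, into $\|\veps^M\|_{L^2}\le C\sqrt{\vareps}$. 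Your affine bulk-extension $V_\vareps$ from $S_\vareps^\pm$ is not needed: it effectively re-proves the content of Lemma~\ref{lem:estimate_L2_membrane_Bulk_Gradient}, and forces you to introduce a second lifting from $\geps$ on top of it. Note also that the statement asks for a bound on the \emph{full} gradient $\nabla\veps^M$, not just $D(\veps^M)$; you do not address this explicitly, but it comes for free from the paper's Korn step once $\weps^M=0$ on $\geps$. For $\|\partial_t\ueps\|_{L^2(\oems)}\le C\sqrt{\vareps}$ the paper combines the variant of Lemma~\ref{lem:estimate_L2_membrane_Bulk_Gradient} on $\oems$ (trace on $\geps$ in place of $S_\vareps^\pm$) with the scaled trace inequality for $\veps^M|_{\geps}$, using the membrane-velocity bounds just obtained; this matches your plan apart from the ordering.
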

\begin{proof}
\textit{Step 1: (Basic estimates including the symmetric gradient)} We test the weak equation $\eqref{eq:Var_Micro_veps}$ with $\phieps= \veps $ in $\oef$ and $\phieps = \partial_t \ueps$ in $\oems$ to obtain for almost everywhere in $(0,T)$
\begin{align*}
\frac12 \frac{d}{dt}& \|\veps\|_{L^2(\oef)}^2 + \frac12 \frac{d}{dt} \|\partial_t \ueps\|_{L^2(\oems)}^2 + \sum_{\pm} \|D(\veps^{\pm})\|_{L^2(\oeps^{\pm})}^2 
\\
&+ \vareps\|D(\veps^M)\|_{L^2(\oemf)}^2 + \frac{1}{\vareps^{\gamma}} \int_{\oems} A_{\vareps} D(\ueps) : D(\partial_t \ueps ) dx = \sum_{\pm} \int_{\oeps^{\pm}} f_{\vareps}^{\pm} \cdot \veps^{\pm} dx.
\end{align*}
Using the coercivity of $A_{\vareps}$ from assumption \ref{AssumptionElasticityTensor}, we obtain after integration with respect to time for all $t \in (0,T)$
\begin{align*}
\|\veps(t)&\|^2_{L^2(\oef)} + \|\partial_t \ueps(t)\|_{L^2(\oems)}^2 + \sum_{\pm} \|D(\veps^{\pm})\|_{L^2((0,t)\times \oeps^{\pm})}^2 + \vareps \|D(\veps^M)\|_{L^2((0,t)\times \oemf)}^2 
\\
&+ \frac{1}{\vareps^{\gamma}} \|D(\ueps)(t)\|_{L^2(\oems)}^2 \le C \left( \sum_{\pm} \left\{\|f_{\vareps}^{\pm}\|^2_{L^2((0,T)\times \oeps^{\pm})} + \|\veps^{\pm}\|_{L^2((0,t)\times \oeps^{\pm})}^2 \right\} + \|\veps^0 \|_{L^2(\oef)}^2 \right).
\end{align*}
Now, Gronwall inequality and the assumptions on $f_{\vareps}^{\pm}$ and $\veps^0$ imply 
\begin{align*}
\|\veps &\|_{L^{\infty}((0,T),L^2(\oef))} + \|\partial_t \ueps \|_{L^{\infty}((0,T), L^2(\oems))} + \sum_{\pm} \|D(\veps^{\pm})\|_{L^2((0,T)\times \oeps^{\pm})} 
\\
&+ \sqrt{\vareps} \|D(\veps^M)\|_{L^2((0,T)\times \oemf)} + \frac{1}{\vareps^{\frac{\gamma}{2}}} \|D(\ueps)\|_{L^{\infty}((0,T),L^2(\oems)}^2 
\\
& \hspace{5em}\le C \left( \sum_{\pm} \|f_{\vareps}^{\pm}\|_{L^2((0,T)\times \oeps^{\pm})} + \|\veps^0\|_{L^2(\oef)}\right) \le C.
\end{align*}
Next, differentiating the microscopic system $\eqref{MicroscopicModel}$ with respect to time (respectively $\eqref{eq:Var_Micro_veps} $ with respect to time) and testing with $\phieps = \partial_t \veps $ in $\oef$ and $\phieps = \partial_{tt} \ueps$ in $\oems$, we get with similar arguments as above
\begin{align*}
\|\partial_t \veps &\|_{L^{\infty}((0,T),L^2(\oef))} + \|\partial_{tt} \ueps \|_{L^{\infty}((0,T), L^2(\oems))} + \sum_{\pm} \|D(\partial_t\veps^{\pm})\|_{L^2((0,T)\times \oeps^{\pm})} 
\\
&+ \sqrt{\vareps} \|D(\partial_t\veps^M)\|_{L^2((0,T)\times \oemf)} + \frac{1}{\vareps^{\frac{\gamma}{2}}} \|D(\partial_t\ueps)\|_{L^{\infty}((0,T),L^2(\oems)}^2 
\\
& \hspace{5em}\le C \left( \sum_{\pm} \|\partial_t f_{\vareps}^{\pm}\|_{L^2((0,T)\times \oeps^{\pm})} + \|\veps^0\|_{H^1(\oef)}\right) \le C.
\end{align*}

\textit{Step 2: (Improved estimates for $\veps^M$ and $\ueps$)} Next, we derive estimates for $\veps^M$ and better estimates for $\ueps$ resp. $\partial_t \ueps$. A direct use of the Korn inequality (see Lemma \ref{KornInequalityPerforatedLayer}) does not give the desired result. We first control $\veps^M$ via the bulk velocities and then in a second step $\ueps$ via the continuity of the velocities across $\geps$. 
We start with the estimate for $\nabla \veps^M$. For this, we define $\weps^M:= \veps^M - \partial_t \tueps$ with the extension $\tueps:= E_{\vareps} \ueps$, where $E_{\vareps}$ denotes the extension operator from Lemma \ref{lem:Extension_operators}. We emphasize that $E_{\vareps}$ is applied pointwise in time and the $L^2$-regularity of $\ueps$ implies that this operator commutes with the time-derivative, \ie we have $\partial_t E_{\vareps} \ueps = E_{\vareps} \partial_t \ueps$. We have $\weps = 0$ on $\geps$ and now the Korn inequalities in Lemma \ref{lem:Korn_inequality} and \ref{KornInequalityPerforatedLayer},  and the estimate for the extension operator from Lemma \ref{lem:Extension_operators} imply 
\begin{align*}
\|\nabla \veps^M\|_{L^2((0,T)\times \oemf)} &\le \|\nabla \weps^M \|_{L^2((0,T)\times \oemf)} + \|\nabla \partial_t \tueps\|_{L^2((0,T)\times \oemf)}
\\
&\le C\left( \|D(\weps^M)\|_{L^2((0,T)\times \oemf)} + \|\partial_t \nabla \ueps\|_{L^2((0,T)\times \oems)}\right)
\\
&\le C \left( \|D(\veps^M)\|_{L^2((0,T)\times \oemf)} + \|\partial_t D(\tueps)\|_{L^2((0,T)\times \oemf)} \right. \\
& \left. + \foe \|\partial_t D(\ueps)\|_{L^2((0,T)\times \oems)} \right) \le \frac{C}{\sqrt{\vareps}}.
\end{align*}
Now, from Lemma \ref{lem:estimate_L2_membrane_Bulk_Gradient} and the inequalities for $\veps^{\pm}$ obtained above we get
\begin{align*}
\frac{1}{\sqrt{\vareps}} \|\veps^M\|_{L^2((0,T)\times \oemf)} \le C \left( \sqrt{\vareps} \|\nabla \veps^M\|_{L^2((0,T)\times \oemf)} + \sum_{\pm} \|\veps^{\pm}\|_{H^1(\oeps^{\pm})}\right) \le C.
\end{align*}
Next, we estimate $\partial_t \ueps $ and $\ueps$. From  $\eqref{ineq:aux_trace_inequality}$ in Lemma \ref{lem:estimate_L2_membrane_Bulk_Gradient} and the trace inequality from Lemma \ref{lem:Stand_scaled_trace_inequality} we obtain with $\veps^M = \partial_t \ueps $ on $\geps$ (also we use the Korn inequality from Lemma \ref{KornInequalityPerforatedLayer})
\begin{align*}
\|\partial_t \ueps \|_{L^2((0,T) \times \oems)} &\le C \left( \vareps \|\partial_t \nabla \ueps\|_{L^2((0,T)\times \oems)} + \sqrt{\vareps} \|\veps^M\|_{L^2((0,T)\times \geps)}\right)
\\
&\le C  \left( \|D(\partial_t \ueps)\|_{L^2((0,T)\times \oems)} + \|\veps^M\|_{L^2((0,T)\times \oemf)} \right.\\
& \left. + \vareps\|\nabla \veps^M\|_{L^2((0,T)\times \oemf)} \right) \le C (\vareps^{\frac{\gamma}{2}} + \sqrt{\vareps}) \le C \sqrt{\vareps}.
\end{align*}
Using $\ueps(t) = \int_0^t \partial_t \ueps dt$ we obtain the  desired result.
\end{proof}

From the Korn inequality  (see Lemma \ref{KornInequalityPerforatedLayer}) we immediately obtain for $\gamma = 3$ the following inequality which improves the estimate for the horizontal components $\ueps^1$ and $\ueps^2$ compared to the case $\gamma = 1$.
\begin{corollary}
For $\gamma  =3$ the displacement $\ueps$ from the microscopic solution $(\veps,\peps,\ueps)$ of $\eqref{MicroscopicModel}$ fulfills
\begin{align*}
\sum_{i=1}^2 \frac{1}{\vareps^{\frac32}} \|\ueps^i\|_{W^{1,\infty}((0,T),L^2(\oems)}  \le C.
\end{align*}
\end{corollary}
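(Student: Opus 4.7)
The plan is to bootstrap the $W^{1,\infty}((0,T),L^2(\oems))$--control of $D(\ueps)$ coming from Lemma \ref{lem:apriori_velocity_displacement} (which for $\gamma=3$ yields $\|D(\ueps)\|_{W^{1,\infty}((0,T),L^2(\oems))} \le C\vareps^{3/2}$) to a bound on the horizontal components $\ueps^1, \ueps^2$ themselves via a Korn-type inequality tailored to the thin perforated layer. The key observation is that the homogeneous Dirichlet condition on $\partial_D \oems = \partial \Sigma \times (-\vareps,\vareps)$ acts on the lateral (horizontal) directions, so one can derive a pointwise-in-$x_3$ Poincaré estimate for the in-plane components without picking up any $\vareps$-factor.

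Concretely, I would first use the perforated Korn inequality from Lemma \ref{KornInequalityPerforatedLayer} applied to $\ueps$ (respectively $\partial_t \ueps$) to pass from the symmetric gradient to the full gradient:
\begin{align*}
\|\nabla \ueps\|_{L^2(\oems)} \;\le\; C\,\|D(\ueps)\|_{L^2(\oems)},
\qquad
\|\nabla \partial_t\ueps\|_{L^2(\oems)} \;\le\; C\,\|D(\partial_t\ueps)\|_{L^2(\oems)}.
\end{align*}
Since $\ueps^i$ vanishes on $\partial \Sigma \times (-\vareps,\vareps)$ for $i=1,2$, a standard Poincaré inequality applied slice-wise in $\bar x$ then gives
\begin{align*}
\|\ueps^i\|_{L^2(\oems)} \;\le\; C\,\|\nabla_{\bar x}\ueps^i\|_{L^2(\oems)} \;\le\; C\,\|\nabla \ueps\|_{L^2(\oems)} \;\le\; C\,\|D(\ueps)\|_{L^2(\oems)},
\end{align*}
and analogously for $\partial_t \ueps^i$. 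Note that the constant $C$ depends only on the fixed horizontal domain $\Sigma$, so no $\vareps$-scaling enters (this is precisely why the horizontal components are improved relative to the vertical one and relative to the generic $\|\ueps\|_{H^1((0,T),L^2(\oems))} \le C\sqrt{\vareps}$ bound).

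Finally, taking the essential supremum in $t \in (0,T)$ and combining with the Lemma \ref{lem:apriori_velocity_displacement} estimate for $\gamma = 3$, namely $\|D(\ueps)\|_{W^{1,\infty}((0,T),L^2(\oems))} \le C\vareps^{3/2}$, yields
\begin{align*}
\|\ueps^i\|_{W^{1,\infty}((0,T),L^2(\oems))} \;\le\; C\,\|D(\ueps)\|_{W^{1,\infty}((0,T),L^2(\oems))} \;\le\; C\,\vareps^{3/2}
\end{align*}
for $i=1,2$, which after dividing by $\vareps^{3/2}$ is the claimed estimate. There is essentially no obstacle here once the correct Korn inequality is invoked; the only subtlety is ensuring that the Poincaré step uses the lateral Dirichlet boundary (so that no $\vareps$-factor is introduced), which is exactly the geometric setup of $\partial_D \oems$.
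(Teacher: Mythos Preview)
Your chain contains a genuine error in the first step. Lemma \ref{KornInequalityPerforatedLayer} does \emph{not} give $\|\nabla \ueps\|_{L^2(\oems)} \le C\|D(\ueps)\|_{L^2(\oems)}$; the correct statement carries an $\vareps^{-1}$ on the right:
\[
\|\nabla \weps\|_{L^2(\oems)} \le \frac{C}{\vareps}\,\|D(\weps)\|_{L^2(\oems)}.
\]
This loss is intrinsic to thin layers (bending modes make the full gradient genuinely larger than the symmetric gradient by a factor $\vareps^{-1}$). With the correct constant, your chain collapses to
\[
\|\ueps^i\|_{L^2(\oems)} \le C\|\nabla_{\x}\ueps^i\|_{L^2(\oems)} \le C\|\nabla \ueps\|_{L^2(\oems)} \le \frac{C}{\vareps}\|D(\ueps)\|_{L^2(\oems)} \le C\vareps^{1/2},
\]
which is only the generic bound from Lemma \ref{lem:apriori_velocity_displacement}, not the improved $\vareps^{3/2}$.

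The repair is that Lemma \ref{KornInequalityPerforatedLayer} already contains the sharp statement you need, without any detour through the full gradient or a slice-wise Poincar\'e argument: it gives directly
\[
\frac{1}{\vareps}\|\weps^i\|_{L^2(\oems)} \le \frac{C}{\vareps}\|D(\weps)\|_{L^2(\oems)}, \qquad i=1,2,
\]
i.e.\ $\|\ueps^i\|_{L^2(\oems)} \le C\|D(\ueps)\|_{L^2(\oems)}$ for the horizontal components. This is exactly what the paper invokes. (Equivalently, the lemma also bounds the in-plane blocks $\|\partial_j \weps^i\|$ for $i,j=1,2$ by $C\|D(\weps)\|$, so if you prefer your Poincar\'e route you could insert \emph{those} components rather than the full gradient; but then the Poincar\'e step is redundant, and in the perforated domain its slice-wise application would in any case require more care than you indicate.)
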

\begin{proof}
   This is a direct consequence of the Korn inequality in Lemma \ref{KornInequalityPerforatedLayer} in the appendix.
\end{proof}

In the next Lemma we give an estimate for the fluid pressures $\peps^{\pm}$ and $\peps^M$. For Stokes-flow in thin perforated layers, see for example \cite{bayada1989homogenization} or \cite{fabricius2023homogenization}, the membrane pressure $\peps^M$ is usually of order $\sqrt{\vareps}$. Here, the coupling to the bulk regions implies that $\peps^M$ is of order $\vareps$ and therefore its two-scale limit vanishes (see also \cite{gahn2022derivation} and \cite{gahn2025effective}). 
\begin{lemma}\label{lem:apriori_pressure}
Let $\gamma \in \{1,3\}$ and let $(\veps,\peps,\ueps)$ be the microscopic solution of $\eqref{MicroscopicModel}$. Then, the fluid pressure $\peps$ fulfills
\begin{align*}
\sum_{\pm} \|\peps^{\pm}\|_{L^2((0,T)\times \oeps^{\pm})} + \foe \|\peps^M\|_{L^2((0,T)\times \oemf)} \le C.
\end{align*}
\end{lemma}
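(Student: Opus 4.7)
My plan is to introduce the effective pressure $\widetilde{p}_\vareps \in L^2((0,T) \times \oef)$ by
\begin{align*}
\widetilde{p}_\vareps|_{\oeps^\pm} := \peps^\pm, \qquad \widetilde{p}_\vareps|_{\oemf} := \frac{1}{\vareps}\peps^M.
\end{align*}
With this definition, the pressure contributions in the weak equation \eqref{eq:Var_Micro_veps} combine into $-\int_{\oef} \widetilde{p}_\vareps\, \nabla \cdot \phieps\, dx$, and the lemma reduces to the uniform estimate $\|\widetilde{p}_\vareps\|_{L^2((0,T)\times \oef)} \le C$.

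The approach is of Bogovskii type: I would construct, measurably in $t$, a test function $\phieps \in L^2((0,T),H^1(\Omega,\partial_D\Omega)^3)$ that satisfies $\nabla \cdot \phieps = \widetilde{p}_\vareps$ on $\oef$ and $\phieps = 0$ on $\oems$, together with the uniform-in-$\vareps$ bounds
\begin{align*}
\|\phieps\|_{L^2(\Omega)} + \|D(\phieps)\|_{L^2(\oeps^\pm)} + \sqrt{\vareps}\,\|D(\phieps)\|_{L^2(\oemf)} \le C\,\|\widetilde{p}_\vareps\|_{L^2(\oef)}
\end{align*}
pointwise in time. The construction splits into a bulk part and a membrane part. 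The bulk part $\peps^\pm$ is handled by a standard Bogovskii operator on $\oeps^\pm$, producing a function supported inside $\oeps^\pm$ (hence vanishing on $\sepm$ and throughout $\oem$); no zero-mean constraint arises because $\phieps$ is unconstrained on $\partial_N\Omega$. For the membrane part $\vareps^{-1}\peps^M$, I would split it into its cell-mean (piecewise constant on the $\vareps$-cells) and the corresponding cell-wise mean-zero fluctuation: the fluctuation is lifted cell-wise using a Bogovskii operator on $Z_f$ with vanishing trace on $\Gamma \cup S^\pm$, and then extended by zero to both $\oems$ (admissible thanks to \eqref{ZscapSpm}) and to the bulk; the piecewise-constant cell-mean is lifted through an auxiliary cell function $\eta \in H^1(Z)^3$ with $\eta|_\Gamma = 0$ and prescribed constant divergence on $Z_f$, rescaled cell by cell, and then smoothly glued into $\oeps^\pm$ through an extension of its trace on $\sepm$.

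Testing \eqref{eq:Var_Micro_veps} with this $\phieps$ yields $\|\widetilde{p}_\vareps(t)\|_{L^2(\oef)}^2 = \int_{\oef} \widetilde{p}_\vareps\,\nabla\cdot\phieps\,dx$, and the right-hand side equals, via the variational identity, the sum of the inertia, viscous, forcing and elastic contributions tested against $\phieps$. The elastic term vanishes thanks to $\phieps|_{\oems} = 0$; the membrane viscous term $\vareps\int_{\oemf} D(\veps^M) : D(\phieps)$ is controlled by the a priori bound $\sqrt{\vareps}\,\|D(\veps^M)\|_{L^2}\le C$ from Lemma \ref{lem:apriori_velocity_displacement} together with the matching $\sqrt{\vareps}$-scaling of $D(\phieps)$ in the membrane; and all remaining inertia, viscous and forcing terms are estimated by Cauchy-Schwarz combined with Lemma \ref{lem:apriori_velocity_displacement}. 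Integrating in time and dividing by $\|\widetilde{p}_\vareps\|_{L^2((0,T)\times\oef)}$ yields the claim. The main obstacle will be the uniform $\vareps$-control of the membrane test function, in particular the lifting of the cell-mean part of $\vareps^{-1}\peps^M$ and its matching on $\sepm$ with the bulk extension; this mirrors the construction used in the rigid-membrane analogue \cite{gahn2025effective}, and the additional elastic term here causes no extra difficulty since it is killed by requiring $\phieps = 0$ on $\oems$.
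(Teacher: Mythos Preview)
Your proposal is correct and follows essentially the same route as the paper. The paper's one-line proof simply invokes Lemma~\ref{lem:Bogovskii} (the Bogovskii result for bulk and thin layer, imported from \cite{gahn2022derivation}) together with the \textit{a priori} estimates of Lemma~\ref{lem:apriori_velocity_displacement}; your detailed construction---a bulk Bogovskii supported away from $\sepm$, plus a cell-wise mean/fluctuation splitting in the membrane with the mean lifted by an auxiliary cell function and extended into the bulk---is precisely the content of Lemma~\ref{lem:Bogovskii}(ii), and your observation that $\phieps=0$ on $\oems$ kills the elastic term is exactly why the argument carries over unchanged from the rigid case in \cite{gahn2025effective}.
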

\begin{proof}
This is a direct consequence of Lemma \ref{lem:Bogovskii} and the \textit{a priori} estimates from Lemma \ref{lem:apriori_velocity_displacement}.
\end{proof}

\section{Compactness results}
\label{sec:compactness_results}

In this section we give the (two-scale) compactness results for the microscopic solutions $(\veps,\peps,\ueps)$. Here, for the sequence $\ueps$ we have to distinguish between the cases $\gamma = 1$ and $\gamma =3$. In the latter, due to the different scalings for the horizontal and the vertical displacements, as well as the scaling of order $\vareps^{\frac32}$ for the symmetric gradient, we obtain in the limit a Kirchhoff-Love displacement, see also \cite{gahn2022derivation}. In the case $\gamma = 1$ all components of the displacement and the symmetric gradient are of the same order. For this, we need the following two-scale compactness result (in which we ignore for a simpler notation the time-variable): 
\begin{lemma}\label{lem:two_scale_general_displ_membrane}
Let $\weps\in H^1(\oems,\partial_D \oems )^3$ with 
\begin{align*}
\frac{1}{\sqrt{\vareps}} \|\weps\|_{L^2(\oems)} + \frac{1}{\sqrt{\vareps}} \|D(\weps)\|_{L^2(\oems)} \le C.
\end{align*}
Then, there exists $w_0 \in H^1(\Sigma)^2 \times L^2(\Sigma)$ and $w_1 \in L^2(\Sigma,H_{\#}^1(Z_s)/\R)^3$ such that up to a subsequence
\begin{align*}
\chi_{\oems} w_{\vareps} \rats \chi_{Z_s} w_0,\qquad  \chi_{\oems} D(w_{\vareps}) \rats \chi_{Z_s} \left( D_{\x} (w_0) + D_y(w_1) \right).
\end{align*}
\end{lemma}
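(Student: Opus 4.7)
My plan is to apply the standard two-scale compactness for thin perforated layers directly to $\chi_{\oems} w_\varepsilon$ and $\chi_{\oems} D(w_\varepsilon)$, and then to extract the structure of the weak limits by combining a scaled Korn inequality for the thin perforated layer with oscillating-test-function arguments. The hypothesis gives uniform bounds for $w_\varepsilon$ and $D(w_\varepsilon)$ in the thin-layer normalization, so the thin-layer two-scale compactness recalled in Appendix B yields, along a common subsequence, $\chi_{\oems} w_\varepsilon \rats \chi_{Z_s} w_0$ for some $w_0\in L^2(\Sigma\times Z_s)^3$ and $\chi_{\oems} D(w_\varepsilon) \rats \chi_{Z_s}\xi_0$ for some $\xi_0\in L^2(\Sigma\times Z_s)^{3\times 3}_{\mathrm{sym}}$.

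To pin down the $y$-dependence of $w_0$ I use that $\varepsilon D(w_\varepsilon)$ has $L^2(\oems)$-norm of order $\varepsilon^{3/2}$, so its two-scale limit is zero. Testing this identity against oscillating symmetric-matrix-valued functions $\phi(\bar x, x/\varepsilon)$ with $\phi\in C_c^\infty(\Sigma;C_\#^\infty(\overline{Z_s})_{\mathrm{sym}}^{3\times 3})$ compactly supported away from $\Gamma$, and moving derivatives onto $\phi$ via integration by parts, the leading $\varepsilon^{-1}$-contribution coming from $\nabla_y\phi$ pairs with $w_\varepsilon$ and, after passing to the limit with the thin-layer two-scale normalization, yields $\int_\Sigma\int_{Z_s} w_0\cdot\mathrm{div}_y\phi\,dy\,d\bar x = 0$. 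Integration by parts in $y$ then gives $D_y(w_0)=0$ in $Z_s$, so $w_0(\bar x,\cdot)$ is a rigid motion in $y$ for a.e.\ $\bar x$, i.e.\ $w_0(\bar x,y)=a(\bar x)+R(\bar x)y$ with $R(\bar x)$ skew-symmetric.

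The main obstacle is to rule out the rotation part $R(\bar x)y$ and to establish the asymmetric regularity $w_0\in H^1(\Sigma)^2\times L^2(\Sigma)$. I would use the scaled Korn inequality of Lemma \ref{KornInequalityPerforatedLayer} on the thin perforated layer with lateral Dirichlet boundary, which controls the $\bar x$-derivatives of the horizontal components $w_\varepsilon^1, w_\varepsilon^2$ uniformly at order $\sqrt{\varepsilon}$ in terms of $\|D(w_\varepsilon)\|_{L^2}$, thereby giving the $H^1(\Sigma)$-regularity of the first two components of $w_0$ after two-scale passage. The skew-rotation components of $R(\bar x)$ would introduce linear (non-$Y$-periodic) profiles in the horizontal $y$-variables that are incompatible with both this $H^1$-compactness in the horizontal directions and the Dirichlet condition on $\partial\Sigma\times(-\varepsilon,\varepsilon)$, hence $R\equiv 0$ and $w_0=w_0(\bar x)$. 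The third component $w_0^3$ is only controlled in $L^2(\Sigma)$ because the scaled Korn inequality degrades by a factor of $\varepsilon^{-1}$ in the vertical direction, so no better regularity can be extracted from the hypothesis.

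Finally, for the corrector $w_1$, set $\eta_0:=\xi_0 - D_{\bar x}(w_0)$. A variant of the oscillating-test-function argument above, now using test functions $\phi$ with $\mathrm{div}_y\phi = 0$, yields $\int_{Z_s}\eta_0(\bar x,y):\phi(y)\,dy = 0$ for a.e.\ $\bar x$ and all such $\phi$. By a de Rham-type argument for the symmetric gradient on $Z_s$ in the $Y$-periodic class (a Helmholtz-type decomposition modulo rigid motions on the reference cell), $\eta_0=D_y(w_1)$ for some $w_1\in L^2(\Sigma;H_\#^1(Z_s)/\mathbb{R})^3$, completing the claimed decomposition.
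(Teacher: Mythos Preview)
Your approach is a direct oscillating-test-function construction, whereas the paper takes a much shorter route: it sets $u_\varepsilon := \varepsilon w_\varepsilon$, so that $\varepsilon^{-3/2}\|D(u_\varepsilon)\|_{L^2(\oems)} \le C$, and applies the Kirchhoff--Love compactness Lemma~\ref{lem:two_scale_Kirchhoff_Love} directly to $u_\varepsilon$. Comparing the two-scale limits of $u_\varepsilon$ with those of $\varepsilon w_\varepsilon$ then gives $u_0^3 = 0$, $w_0^\alpha = u_1^\alpha \in H_0^1(\Sigma)$ for $\alpha = 1,2$, and $w_1 := u_2$, so the $H^1$-regularity of the horizontal components and the existence of the corrector come for free from the cited lemma, without any de Rham-type argument.

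Your direct argument has a genuine gap in the elimination of the rotation $R(\bar x)$. You assert that $R(\bar x)y$ is ``non-$Y$-periodic'' and hence excluded, but at that stage you have only $w_0 \in L^2(\Sigma \times Z_s)^3$ from basic two-scale compactness of $\chi_{\oems}w_\varepsilon$; you have not established that $w_0$ is $Y$-periodic in the horizontal $y$-variables, and neither the lateral Dirichlet condition on $\partial\Sigma \times (-\varepsilon,\varepsilon)$ nor the tangential $H^1$-bounds supplies it. What is actually needed is the full-gradient bound $\sqrt{\varepsilon}\|\nabla w_\varepsilon\|_{L^2(\oems)} \le C$ (which follows from the Korn inequality of Lemma~\ref{KornInequalityPerforatedLayer}) together with Lemma~\ref{LemmaBasicTSCompactness}, which places $w_0$ in $L^2(\Sigma, H^1_\#(Z_s))$; a $Y$-periodic rigid motion is then constant, and $R\equiv 0$ follows. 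This is precisely what the paper does before introducing $u_\varepsilon$. Your remaining steps --- upgrading $(w_0^1, w_0^2)$ to $H^1(\Sigma)$ from the tangential gradient bounds, and the de Rham-type identification $\xi_0 - D_{\bar x}(w_0) = D_y(w_1)$ --- are also only sketched; the latter is delicate because $w_0^3 \in L^2(\Sigma)$ prevents a straightforward integration by parts in $\bar x$, and the closedness of the range of the periodic symmetric gradient on $Z_s$ with free boundary on $\Gamma$ has to be justified explicitly. The paper's reduction to Lemma~\ref{lem:two_scale_Kirchhoff_Love} sidesteps all of this.
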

\begin{proof}
From the Korn inequality in Lemma \ref{KornInequalityPerforatedLayer} we get
\begin{align*}
    \frac{1}{\sqrt{\vareps}} \|\weps\|_{L^2(\oems)} + \sqrt{\vareps} \|\nabla \weps \|_{L^2(\oems)} \le C.
\end{align*}
Hence, there exists $w_0 \in L^2(\Sigma, H_{\#}^1(Z_s)/\R)^3$ such that up to a subsequence (see Lemma \ref{LemmaBasicTSCompactness} in the appendix) 
\begin{align*}
  \chi_{\oems} \weps \rats \chi_{Z_s} w_0,  \qquad \chi_{\oems}\vareps \nabla \weps \rats \chi_{Z_s} \nabla_y w_0. 
\end{align*}
The estimate for $D(\weps)$ immediately implies $D_y(w_0)=0$ and therefore $w_0$ is a rigid displacement. The  periodic boundary condition (with respect to the first two components) implies $w_0(\x,y) = w_0(\x)\in L^2(\Sigma)^3$. Now,  we define $\ueps:= \vareps \weps $, which fulfills
\begin{align*}
    \frac{1}{\vareps^{\frac32}} \|D(\ueps)\|_{L^2(\oeps^M)} \le C.
\end{align*}
An application of Lemma \ref{lem:two_scale_Kirchhoff_Love} from the appendix (which is also valid in the time-independent case) implies the existence of $u_0^3 \in H^2_0(\Sigma)$, $\hat{u}_1 = (u_1^1,u_1^2)\in H_0^1(\Sigma)^2 $, and $u_2 \in L^2(\Sigma,H_{\#}^1(Z_s)/\R)^3$ such that up to a subsequence (for $\alpha=1,2$)
\begin{align*}
    \chi_{\oems}\ueps^3 &\rats \chi_{Z_s}u_0^3,
\\
\chi_{\oems}\frac{\ueps^{\alpha}}{\epsilon} &\rats \chi_{Z_s}\big(u_1^{\alpha} - y_3 \partial_{\alpha} u_0^3\big),
\\
\frac{1}{\epsilon} \chi_{\oems} D(\ueps) &\rats  \chi_{Z_s} \left(D_{\x}(\hat{u}_1) - y_3 \nabla_{\x}^2 u_0^3 + D_y(u_2) \right).
\end{align*}
Comparing these convergences with the two-scale convergences of $\weps$ above and using $\ueps = \vareps \weps$, we immediately obtain $u_0^3 = 0$ and $w_0^{\alpha} = u_1^{\alpha} \in H^1_0(\Sigma)$ for $\alpha = 1,2$. With $w_1:= u_2$ we obtain the desired result.
\end{proof}

Now, as a direct consequence we obtain for $\gamma =1$ the following compactness result for the displacement and its time-derivative:
\begin{proposition}\label{prop:compactness_disp_gamma1}[Compactness displacement $\gamma = 1$]
Let $\gamma = 1$ and let $(\veps,\ueps,\peps)$ be the microscopic solution of $\eqref{MicroscopicModel}$. Then, there exist
\begin{align*}
    u_0 &\in H^1((0,T),H^1_0(\Sigma)^2 \times L^2(\Sigma))
    \\
    u_1 &\in H^1((0,T),L^2(\Sigma,H_{\#}^1(Z_s)/\R))^3,
\end{align*}
such that up to a subsequence it holds that (for $\alpha = 0,1$)
\begin{align*}
    \chi_{\oems} \partial_t^{\alpha}\ueps &\rats \chi_{Z_s} \partial_t^{\alpha} u_0, 
    \\
    \chi_{\oems} D( \partial_t^{\alpha}\ueps) &\rats \chi_{Z_s}\left( D_{\x} (\partial_t^{\alpha} \hat{u}_0) + D_y( \partial_t^{\alpha} u_1)\right)
\end{align*}
with $\hat{u}_0:= (u_0^1,u_0^2)$.
\end{proposition}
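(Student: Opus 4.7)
The plan is to apply Lemma \ref{lem:two_scale_general_displ_membrane} separately to $\ueps$ and to $\partial_t \ueps$, regarding time as a parameter, and then to identify the two-scale limit of $\partial_t \ueps$ with the time derivative of the two-scale limit of $\ueps$ via a standard duality argument.

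First, I invoke the \textit{a priori} estimates from Lemma \ref{lem:apriori_velocity_displacement} specialized to $\gamma = 1$. These give, for $\alpha \in \{0,1\}$,
\begin{align*}
\frac{1}{\sqrt{\vareps}} \|\partial_t^\alpha \ueps\|_{L^2((0,T) \times \oems)} + \frac{1}{\sqrt{\vareps}} \|D(\partial_t^\alpha \ueps)\|_{L^2((0,T) \times \oems)} \le C,
\end{align*}
so that each of the families $\{\partial_t^\alpha \ueps(t,\cdot)\}_\vareps$ satisfies, uniformly in time, the hypotheses of Lemma \ref{lem:two_scale_general_displ_membrane}. Applying the two-scale compactness Lemma \ref{LemmaBasicTSCompactness} with $t$ as a parameter and repeating the rescaling argument $\ueps = \vareps \weps$ from the proof of Lemma \ref{lem:two_scale_general_displ_membrane}, I extract a common subsequence along which, for each $\alpha \in \{0,1\}$, there exist
\begin{align*}
u_0^{(\alpha)} \in L^2\bigl((0,T), H^1_0(\Sigma)^2 \times L^2(\Sigma)\bigr), \qquad u_1^{(\alpha)} \in L^2\bigl((0,T) \times \Sigma, H^1_\#(Z_s)/\R\bigr)^3,
\end{align*}
such that $\chi_{\oems} \partial_t^\alpha \ueps \rats \chi_{Z_s} u_0^{(\alpha)}$ and $\chi_{\oems} D(\partial_t^\alpha \ueps) \rats \chi_{Z_s}\bigl(D_\x(\hat u_0^{(\alpha)}) + D_y(u_1^{(\alpha)})\bigr)$.

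Next, I identify $u_0^{(1)} = \partial_t u_0^{(0)}$, which then yields the required $H^1$-regularity in time of $u_0 := u_0^{(0)}$. The argument is standard: test the two-scale convergences with $\phi(t)\Phi(\x)\Psi(y)$ for $\phi \in C_c^\infty((0,T))$, $\Phi \in C_c^\infty(\Sigma)^3$, $\Psi \in C_\#^\infty(\overline{Z_s})$, integrate by parts in time in the integral involving $\partial_t \ueps$, and compare with the two-scale limit of the corresponding integral for $\alpha = 0$. An analogous argument with matrix-valued test fields $\phi(t)\Phi(\x) D_y \Psi(y)$, combined with the uniqueness of the symmetric gradient decomposition in $L^2(\Sigma \times Z_s)$, yields $D_y u_1^{(1)} = D_y \partial_t u_1^{(0)}$, hence $u_1^{(1)} = \partial_t u_1^{(0)}$ in the quotient $H^1_\#(Z_s)/\R$. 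Setting $u_1 := u_1^{(0)}$ concludes the identification.

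The step I expect to require the most care is the treatment of the microscopic corrector $u_1$ in time. Since $u_1$ is determined only modulo an additive function of $(t,\x)$, the statement $u_1 \in H^1((0,T), L^2(\Sigma, H^1_\#(Z_s)/\R))^3$ must be interpreted in the quotient space, and the identification $u_1^{(1)} = \partial_t u_1^{(0)}$ relies on a measurable selection of representatives, most conveniently the mean-zero-on-$Z_s$ choice for each $(t,\x)$. Once this selection is made consistently, the duality identification of weak time derivatives goes through and the remaining details are routine.
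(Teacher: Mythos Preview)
Your proof is correct and follows the same route as the paper: apply Lemma \ref{lem:two_scale_general_displ_membrane} together with the \textit{a priori} estimates of Lemma \ref{lem:apriori_velocity_displacement}. The paper's own proof is a one-liner that merely remarks the lemma ``can be modified directly for the time-dependent case''; you have spelled out this modification (separate application to $\ueps$ and $\partial_t \ueps$ followed by the standard identification of the weak time derivative via duality), which is exactly the right way to flesh it out.
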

\begin{proof}
This is a direct consequence of Lemma \ref{lem:two_scale_general_displ_membrane} (which can be modified directly for the time-dependent case) and the \textit{a priori} estimates for the displacement in Lemma \ref{lem:apriori_velocity_displacement}.
\end{proof}
For the case $\gamma = 3$ we can directly use the bound for the symmetric gradient $D(\ueps)$ and obtain from Lemma \ref{lem:two_scale_Kirchhoff_Love}:
\begin{proposition}\label{prop:compactness_disp_gamma3}[Compactness displacement $\gamma = 3$]
Let $\gamma = 3$ and let $(\veps,\ueps,\peps)$ be the microscopic solution of $\eqref{MicroscopicModel}$. Then, there exist 
\begin{align*}
    u_0^3 &\in H^1((0,T),H^2_0(\Sigma)),
    \\
    \hat{u}_1 &\in H^1((0,T),H^1_0(\Sigma))^2,
    \\
    u_2 &\in H^1((0,T),L^2(\Sigma,H_{\#}^1(Z_s)/\R))^3,
\end{align*}
such that up to a subsequence it holds that 
\begin{align*}
    \chi_{\oems}\ueps^3 &\rats \chi_{Z_s} u_0^3,
\\
\chi_{\oems}\frac{\ueps^{\alpha}}{\epsilon} &\rats \chi_{Z_s}\big(u_1^{\alpha} - y_3 \partial_{\alpha} u_0^3\big),
\\
\frac{1}{\epsilon} \chi_{\oems} D(\ueps) &\rats  \chi_{Z_s} \left(D_{\x}(\hat{u}_1) - y_3 \nabla_{\x}^2 u_0^3 + D_y(u_2) \right),
\end{align*}
where $\hat{u}_1 = (u_1^1, u_1^2)$ and $\alpha=1,2$.
The same result is valid if  we consider $\partial_t \ueps$ and the time-derivatives of the limit functions.
\end{proposition}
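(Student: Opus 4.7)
The plan is to apply the Kirchhoff-Love two-scale compactness result (Lemma \ref{lem:two_scale_Kirchhoff_Love}, cited from \cite{GahnJaegerTwoScaleTools}) in a time-parametrized fashion, since the scaling hypotheses of that lemma match exactly what the a priori estimates of Lemma \ref{lem:apriori_velocity_displacement} supply in the case $\gamma=3$. Specifically, for $\gamma=3$ we have
\[
\tfrac{1}{\vareps^{3/2}} \|D(\ueps)\|_{W^{1,\infty}((0,T),L^2(\oems))} \le C,
\]
together with the horizontal refinement $\tfrac{1}{\vareps^{3/2}} \|\ueps^{\alpha}\|_{W^{1,\infty}((0,T),L^2(\oems))} \le C$ for $\alpha=1,2$ from the corollary, and the vertical bound $\tfrac{1}{\vareps^{1/2}} \|\ueps^3\|_{H^1((0,T),L^2(\oems))} \le C$ from Lemma \ref{lem:apriori_velocity_displacement}. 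These are precisely the hypotheses required for Kirchhoff-Love type two-scale limits.

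First, I would freeze time and apply Lemma \ref{lem:two_scale_Kirchhoff_Love} to $\ueps(t,\cdot)$: this produces limits $u_0^3 \in H^2_0(\Sigma)$, $\hat{u}_1 \in H^1_0(\Sigma)^2$ and a corrector $u_2 \in L^2(\Sigma, H^1_{\#}(Z_s)/\R)^3$ yielding the stated two-scale convergences
\[
\chi_{\oems}\ueps^3 \rats \chi_{Z_s} u_0^3, \qquad \chi_{\oems}\tfrac{\ueps^{\alpha}}{\vareps} \rats \chi_{Z_s}\bigl(u_1^{\alpha} - y_3 \partial_{\alpha} u_0^3\bigr), \qquad \tfrac{1}{\vareps}\chi_{\oems}D(\ueps) \rats \chi_{Z_s}\bigl(D_{\x}(\hat{u}_1) - y_3 \nabla_{\x}^2 u_0^3 + D_y(u_2)\bigr).
\]
Because all these bounds are in $W^{1,\infty}$ (respectively $H^1$) in time, standard arguments for two-scale convergence of Bochner-valued sequences (test with $\varphi(t)\psi(x,x/\vareps)$ and integrate in time, then apply the time-independent Kirchhoff-Love lemma pointwise a.e.) upgrade the limits to the regularity $u_0^3 \in H^1((0,T),H^2_0(\Sigma))$, $\hat{u}_1 \in H^1((0,T),H^1_0(\Sigma))^2$, $u_2 \in H^1((0,T),L^2(\Sigma,H^1_{\#}(Z_s)/\R))^3$. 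In particular the Dirichlet boundary conditions on $\partial \Sigma$ are preserved in the limit since the lateral trace $\ueps = 0$ on $\partial_D \oems$ passes to the two-scale limit.

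To handle the time-derivative claim, I would simply repeat the same procedure with $\partial_t \ueps$ in place of $\ueps$: the a priori estimates in Lemma \ref{lem:apriori_velocity_displacement} actually bound $D(\partial_t \ueps)$ in $L^{\infty}((0,T),L^2(\oems))$ with the scaling $\vareps^{3/2}$, and the corresponding horizontal bound for $\partial_t \ueps^{\alpha}$ is also of order $\vareps^{3/2}$. Applying Lemma \ref{lem:two_scale_Kirchhoff_Love} to $\partial_t \ueps$ yields Kirchhoff-Love limits $\partial_t u_0^3$, $\partial_t \hat{u}_1$, and $\partial_t u_2$. A short uniqueness/consistency argument (pass to the two-scale limit in the identity $\int_0^T \partial_t \ueps \varphi' = -\int_0^T \ueps \varphi$ against smooth test functions in time) identifies these limits with the time derivatives of the previously obtained limits, completing the proposition.

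The only subtle point, which I do not expect to be a real obstacle but would verify carefully, is the identification that the Kirchhoff-Love limit extracted from $\partial_t \ueps$ coincides with the distributional time derivative of the limit extracted from $\ueps$; this follows from linearity of the two-scale convergence and the commutation of the extraction of limits with testing against smooth time-dependent functions. Everything else is a direct invocation of the quoted compactness lemma together with the $\vareps$-uniform bounds established in Section \ref{sec:apriori}.
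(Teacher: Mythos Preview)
Your proposal is correct and follows essentially the same approach as the paper: the paper's proof consists of a single sentence invoking Lemma~\ref{lem:two_scale_Kirchhoff_Love} together with the \textit{a priori} estimates of Lemma~\ref{lem:apriori_velocity_displacement}. Note that Lemma~\ref{lem:two_scale_Kirchhoff_Love} is already stated for time-dependent sequences $\weps \in L^2((0,T),H^1(\oems,\partial_D\oems))^3$, so you do not need to ``freeze time'' and argue pointwise---you can apply it directly to $\ueps$ and to $\partial_t\ueps$; your consistency argument identifying the two limits is then exactly what is implicit in the paper's one-line proof.
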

\begin{proof}
Follows from Lemma \ref{lem:two_scale_Kirchhoff_Love} in the appendix and the \textit{a priori} estimates in Lemma \ref{lem:apriori_velocity_displacement}.
\end{proof}

Next, we formulate the convergence results for the bulk velocity and pressure.
\begin{proposition}\label{prop:compactness_bulk}[Compactness bulk velocity and pressure]
Let $\gamma \in \{1,3\}$ and let $(\veps,\ueps,\peps)$ be the microscopic solution of $\eqref{MicroscopicModel}$. Then, there exist
\begin{align*}
    v_0^{\pm} &\in L^2((0,T),H^1(\Omega^{\pm})^3 \cap H^1((0,T),L^2(\Omega^{\pm}))^3,
    \\
    p_0^{\pm} &\in L^2((0,T)\times \Omega^{\pm}),
\end{align*}
such that up to a subsequence
\begin{align*}
    \chi_{\oeps^{\pm}} \veps^{\pm} &\rightarrow v_0^{\pm} &\mbox{ in }& L^2((0,T)\times \Omega^{\pm})^3,
    \\
    \chi_{\oeps^{\pm}} \nabla \veps^{\pm} &\rightharpoonup \nabla v_0^{\pm} &\mbox{ weakly in }& L^2((0,T) \times \oeps^{\pm})^{3\times 3},
    \\
    \veps^{\pm}|_{S_{\vareps}^{\pm}} &\rightarrow v_0^{\pm}|_{\Sigma} &\mbox{ in }& L^2((0,T)\times \Sigma)^3,
    \\
    \chi_{\oeps^{\pm}} \partial_t \veps^{\pm} &\rightharpoonup \partial_t v_0^{\pm} &\mbox{ weakly in }& L^2((0,T)\times \Omega^{\pm})^3,
    \\
    \chi_{\oeps^{\pm}}\peps^{\pm} &\rightharpoonup p_0^{\pm} &\mbox{ weakly in }& L^2((0,T)\times \Omega^{\pm}).
\end{align*}
\end{proposition}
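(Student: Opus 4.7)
The plan is to extend the bulk velocities from the $\vareps$-dependent domains $\oeps^\pm$ to the fixed limit domains $\Omega^\pm$, and then apply standard weak compactness together with an Aubin-Lions-type argument. Since $\oeps^+=\Sigma\times(\vareps,H)$ differs from $\Omega^+=\Sigma\times(0,H)$ only by the thin strip $\Sigma\times(0,\vareps)$, a reflection across $\Sigma\times\{\vareps\}$ produces an extension operator $E_\vareps^\pm\colon H^1(\oeps^\pm,\partial_D\oeps^\pm)\to H^1(\Omega^\pm,\partial_D\Omega^\pm)$ whose operator norm is bounded uniformly in $\vareps$ and which commutes with $\partial_t$. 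Applying $E_\vareps^\pm$ to $\veps^\pm$ and invoking the \textit{a priori} bounds of Lemma \ref{lem:apriori_velocity_displacement}, I obtain uniform bounds for $E_\vareps^\pm\veps^\pm$ in $L^\infty((0,T),L^2(\Omega^\pm))\cap L^2((0,T),H^1(\Omega^\pm))$ and for $\partial_t E_\vareps^\pm\veps^\pm$ in $L^\infty((0,T),L^2(\Omega^\pm))$. Standard weak and weak-$\ast$ compactness then extracts a subsequence and limits $v_0^\pm$ of the stated regularity, while the Aubin-Lions-Simon theorem, using the compact embedding $H^1(\Omega^\pm)\hookrightarrow\hookrightarrow L^2(\Omega^\pm)$ and the uniform time-derivative bound, yields strong convergence $E_\vareps^\pm\veps^\pm\to v_0^\pm$ in $L^2((0,T)\times\Omega^\pm)^3$.

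To pass from $E_\vareps^\pm\veps^\pm$ back to $\chi_{\oeps^\pm}\veps^\pm$, I would show that the two differ only on the thin strip $\Omega^\pm\setminus\oeps^\pm$, whose contribution vanishes in $L^2$. Writing $w(\x,x_3)=w(\x,\pm\vareps)+\int_{\pm\vareps}^{x_3}\partial_3 w(\x,s)\,ds$ for $w=E_\vareps^\pm\veps^\pm$, applying Cauchy--Schwarz, and combining with a uniform trace bound and the uniform $L^2((0,T),H^1(\Omega^\pm))$-bound for $E_\vareps^\pm\veps^\pm$, one obtains $\|E_\vareps^\pm\veps^\pm\|_{L^2((0,T)\times(\Omega^\pm\setminus\oeps^\pm))}\le C\sqrt{\vareps}$, hence $\chi_{\oeps^\pm}\veps^\pm\to v_0^\pm$ strongly in $L^2((0,T)\times\Omega^\pm)^3$. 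The weak convergences of $\chi_{\oeps^\pm}\nabla\veps^\pm$ to $\nabla v_0^\pm$ and of $\chi_{\oeps^\pm}\partial_t\veps^\pm$ to $\partial_t v_0^\pm$ are then read off from the weak limits of $\nabla E_\vareps^\pm\veps^\pm$ and $\partial_t E_\vareps^\pm\veps^\pm$, after checking analogously that the strip contributions are of order $\sqrt{\vareps}$.

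For the trace convergence on $S_\vareps^\pm$, I would combine two ingredients. First, the identity $\veps^\pm|_{S_\vareps^\pm}=(E_\vareps^\pm\veps^\pm)|_{S_\vareps^\pm}$ together with $\|\veps^\pm|_{S_\vareps^\pm}-(E_\vareps^\pm\veps^\pm)|_{\Sigma}\|_{L^2((0,T)\times\Sigma)}\le \sqrt{\vareps}\,\|\partial_3 E_\vareps^\pm\veps^\pm\|_{L^2((0,T)\times\Sigma\times(0,\pm\vareps))}\to 0$ reduces the problem to the trace at $\Sigma$. Second, $(E_\vareps^\pm\veps^\pm)|_\Sigma\to v_0^\pm|_\Sigma$ strongly in $L^2((0,T)\times\Sigma)^3$ by an Aubin-Lions-Simon argument applied to the trace, exploiting the continuous trace $H^1(\Omega^\pm)\to H^{1/2}(\Sigma)$, the compact embedding $H^{1/2}(\Sigma)\hookrightarrow L^2(\Sigma)$, and the uniform time-derivative bound. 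The weak $L^2$-convergence of the pressures is immediate from the bound in Lemma \ref{lem:apriori_pressure}. The main technical point is to set up $E_\vareps^\pm$ so that the Dirichlet boundary condition on $\partial_D\Omega^\pm$ is preserved and the $\vareps$-scalings in the thin strip are transparent; the reflection construction handles both requirements cleanly.
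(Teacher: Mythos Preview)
Your proposal is correct and follows essentially the same approach as the paper: extend $\veps^{\pm}$ to the fixed domain $\Omega^{\pm}$, apply Aubin--Lions to obtain the strong $L^2$ and trace limits, read off the weak limits of gradients and time derivatives, and control the difference between $S_\vareps^{\pm}$- and $\Sigma$-traces by the fundamental theorem of calculus in the normal direction, yielding the $\sqrt{\vareps}$ decay. The paper merely invokes ``standard extension theorems'' rather than spelling out the reflection construction and strip estimates, but the strategy is identical.
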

\begin{proof}
A proof for a similar result can be found in \cite{NeussJaeger_EffectiveTransmission}. However, for the sake of completeness we shortly sketch another proof of this result. Due to standard extension theorems for Sobolev functions we find an extension $\tveps^{\pm} \in L^2((0,T),H^1(\Omega^{\pm}))^3 \cap H^1((0,T),L^2(\Omega^{\pm}))^3$, which fulfills the same \textit{a priori} estimates as $\veps^{\pm}$ in Lemma \ref{lem:apriori_velocity_displacement}. Hence, using the Aubin-Lions Lemma, we can find limit functions $v_0^{\pm} \in L^2((0,T),H^1(\Omega^{\pm}))^3 \cap H^1((0,T),L^2(\Omega^{\pm}))^3$, such that up to a subsequence
\begin{align*}
 \tveps^{\pm} &\rightarrow v_0^{\pm} &\mbox{ in }& L^2((0,T)\times \Omega^{\pm})^3,
    \\
    \nabla \tveps^{\pm} &\rightharpoonup \nabla v_0^{\pm} &\mbox{ weakly in }& L^2((0,T) \times \oeps^{\pm})^{3\times 3},
    \\
    \tveps^{\pm}|_{\Sigma}^{\pm} &\rightarrow v_0^{\pm}|_{\Sigma} &\mbox{ in }& L^2((0,T)\times \Sigma)^3,
    \\
     \partial_t \tveps^{\pm} &\rightharpoonup \partial_t v_0^{\pm} &\mbox{ weakly in }& L^2((0,T)\times \Omega^{\pm})^3.
\end{align*}
This immediately implies all the desired convergences of $\veps^{\pm}$ except the convergence of the trace. With the mean value theorem we obtain
\begin{align*}
   \|\veps^{\pm}|_{S_{\vareps}^{\pm}} - \tveps^{\pm}|_{\Sigma} \|_{L^2((0,T)\times \Sigma)}  \le C \sqrt{\vareps} \|\partial_n \tveps^{\pm}\|_{L^2((0,T)\times \Omega^{\pm})} \le C\sqrt{\vareps}, 
\end{align*}
which implies the strong convergence of the trace $\veps^{\pm}|_{S_{\vareps}^{\pm}}$ to $v_0^{\pm}|_{\Sigma}$. The convergence of the pressure is obvious.
\end{proof}
Now, we formulate the compactness results for $\veps^M$ and $\peps^M$ together with  coupling conditions between the limit $v_0^M$ and $v_0^{\pm}$ respectively $\partial_t u_0$.

\begin{proposition}\label{prop:compactness_fluid_layer}
    Let $\gamma \in \{1,3\}$ and let $(\veps,\ueps,\peps)$ be the microscopic solution of $\eqref{MicroscopicModel}$. Then, there exists $v_0^M \in L^2((0,T)\times \Sigma, H_{\#}^1(Z_f))^3$ with $\nabla_y \cdot v_0^M = 0$ such that
    \begin{align*}
    \chi_{\oeps^{M,f}}\veps^M \rats \chi_{Z_f} v_0^M, \qquad \vareps \chi_{\oeps^{M,f}}\nabla \veps^M \rats \chi_{Z_f} \nabla_y v_0^M, \qquad
    \chi_{\oeps^{M,f}}\peps^M \rats 0.
    \end{align*}
Further, it holds that
\begin{align*}
    v_0^M &= v_0^{\pm} &\mbox{ on }& (0,T)\times \Sigma \times S^{\pm},
    \\
    v_0^M &= \partial_t u_0 &\mbox{ on }& (0,T)\times \Sigma \times \Gamma,
\end{align*}
with $u_0$ from Proposition \ref{prop:compactness_disp_gamma1} for $\gamma =1$ and $u_0 = (0,0,u_0^3)$ with $u_0^3$ from Proposition \ref{prop:compactness_disp_gamma3} for $\gamma =3$. In particular $v_0^M$ is constant on $S^{\pm}$ with respect to $y$.
\end{proposition}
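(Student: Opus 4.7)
The plan is to combine standard two-scale compactness for thin perforated layers with the trace continuity conditions at the microscopic interfaces $\sepm$ and $\geps$. From Lemma \ref{lem:apriori_velocity_displacement} we have
\[
\|\veps^M\|_{L^2((0,T)\times\oemf)} \le C\sqrt{\vareps}, \qquad \|\vareps\nabla\veps^M\|_{L^2((0,T)\times\oemf)} \le C\sqrt{\vareps},
\]
which are exactly the critical scalings needed for two-scale convergence in a layer of thickness $\vareps$ to produce an $O(1)$ two-scale limit. Applying the basic two-scale compactness result for thin perforated layers (Lemma \ref{LemmaBasicTSCompactness} in the appendix) yields, along a subsequence, a limit $v_0^M \in L^2((0,T)\times\Sigma, H^1_{\#}(Z_f))^3$ with the first two convergences of the statement. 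From Lemma \ref{lem:apriori_pressure} we have $\|\peps^M\|_{L^2}\le C\vareps$, one order below the critical scaling $\sqrt{\vareps}$, which immediately gives $\chi_{\oemf}\peps^M \rats 0$. The constraint $\nabla_y\cdot v_0^M = 0$ is obtained by passing to the two-scale limit in $\nabla\cdot\veps^M = 0$: testing against $\vareps\,\psi(t,\x)\,\phi(x/\vareps)$ with $\phi \in C_{\#}^{\infty}(\overline{Z_f})$ and integrating by parts yields, in the limit, $\int \chi_{Z_f}\, v_0^M\cdot\nabla_y\phi\, dy\, d\x\, dt = 0$.

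For the trace identification on $\Sigma\times S^\pm$ I would use the microscopic continuity $\veps^M = \veps^\pm$ on $\sepm$. On one side, Proposition \ref{prop:compactness_bulk} gives the strong convergence $\veps^\pm|_{\sepm}\to v_0^\pm|_\Sigma$ in $L^2((0,T)\times\Sigma)^3$; on the other side, the two-scale trace convergence of $\veps^M$ on $\sepm$ produces the value of $v_0^M$ at $y_3 = \pm 1$. Matching both limits against test functions of the form $\psi(t,\x)\,\phi(x/\vareps)$ forces $v_0^M|_{\Sigma\times S^\pm} = v_0^\pm|_\Sigma$ almost everywhere, and since the right-hand side is $\bar y$-independent, this simultaneously proves that $v_0^M$ is constant in $\bar y$ on $S^\pm$.

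The identification $v_0^M = \partial_t u_0$ on $\Sigma\times\Gamma$ is obtained analogously from the microscopic condition $\veps^M = \partial_t\ueps$ on $\geps$, by passing to the surface two-scale limit on the $\vareps$-oscillating interface (using the framework of \cite{bhattacharya2022homogenization}). For $\gamma=1$, Proposition \ref{prop:compactness_disp_gamma1} gives $\chi_{\oems}\partial_t\ueps \rats \chi_{Z_s}\partial_t u_0$ with $\partial_t u_0$ independent of $y$, so the induced surface trace on $\Gamma$ is $\partial_t u_0$ itself. For $\gamma=3$, the horizontal components $\partial_t\ueps^\alpha$ are of order $\vareps^{3/2}$ in $L^2(\oems)$ by the Corollary following Lemma \ref{lem:apriori_velocity_displacement}, so their surface two-scale limit on $\Gamma$ vanishes, while Proposition \ref{prop:compactness_disp_gamma3} identifies the vertical component as $\partial_t u_0^3$; this yields $v_0^M|_\Gamma = (0,0,\partial_t u_0^3) = \partial_t u_0$ with $u_0 := (0,0,u_0^3)$.

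The principal technical obstacle is the rigorous handling of the surface two-scale traces on the $\vareps$-oscillating interface $\geps$ inside the thin layer, and in particular, in the case $\gamma=3$, matching the surface limit of $\veps^M$ with the interior two-scale limits of $\ueps$ whose components have different $\vareps$-scalings. This requires combining surface two-scale convergence adapted to thin perforated layers with appropriately scaled trace inequalities; once this is in place, the identifications above follow cleanly from the continuity conditions in \eqref{ContinuityVelocity} and in the microscopic elasticity problem.
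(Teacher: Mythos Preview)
Your proposal is correct and follows essentially the same approach as the paper's proof, which is itself only a brief sketch: the paper invokes the \textit{a priori} estimates from Lemmas~\ref{lem:apriori_velocity_displacement} and~\ref{lem:apriori_pressure} together with Lemma~\ref{LemmaBasicTSCompactness} for the compactness, and for the boundary conditions on $S^{\pm}$ and $\Gamma$ simply says that they follow from the microscopic continuity $\veps^M = \veps^{\pm}$ on $S_{\vareps}^{\pm}$ and $\veps^M = \partial_t \ueps$ on $\geps$ combined with the convergence results for $\veps^{\pm}$ and $\partial_t \ueps$, referring to \cite{gahn2022derivation} for the details. You have effectively reconstructed those details, including the correct treatment of the two cases $\gamma=1,3$ on $\Gamma$.
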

\begin{proof}
The compactness results for $\veps^M$ and $\peps^M$ are direct consequences of the \textit{a priori} estimates from Lemma \ref{lem:apriori_velocity_displacement} and \ref{lem:apriori_pressure} together with Lemma \ref{LemmaBasicTSCompactness} in the appendix. The conditions on $S^{\pm}$ and $\Gamma$ follow from $\veps^M = \veps^{\pm}$ on $S_{\vareps}^{\pm}$ and $\veps^M = \partial_t \ueps$ on $\geps$, and the convergence results for $\veps^{\pm}$ and $\partial_t \ueps$. We refer to \cite{gahn2022derivation} for more details.
\end{proof}

\begin{remark}
We emphasize that here we have no information about $\nabla_{\x} \cdot \int_{Z_f} v_0^M dy$, which usually gives a Darcy law.
\end{remark}
As a consequence of the divergence condition on $v_0^M$ we obtain the continuity of the normal component of the velocity $v_0^{\pm}$ across $\Sigma$:
\begin{corollary}
The limit function $v_0^{\pm}$ from Proposition \ref{prop:compactness_bulk} fulfills
\begin{align*}
    [v_0^+]_3 = [v_0^-]_3\qquad \mbox{on } (0,T)\times \Sigma.
\end{align*}
\end{corollary}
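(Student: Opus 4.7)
The plan is to exploit the divergence-free condition $\nabla_y \cdot v_0^M = 0$ in $Z_f$ together with the explicit boundary values of $v_0^M$ furnished by Proposition \ref{prop:compactness_fluid_layer}. For almost every $(t,\bar{x}) \in (0,T) \times \Sigma$, I would integrate this equation over $Z_f$ and apply the divergence theorem in $y$ to obtain
\[
0 = \int_{Z_f} \nabla_y \cdot v_0^M \, dy = \int_{\partial Z_f} v_0^M \cdot \nu_f \, d\sigma_y,
\]
where $\nu_f$ denotes the outer unit normal to $Z_f$. The boundary $\partial Z_f$ decomposes into $S^+$, $S^-$, $\Gamma$, and a lateral portion contained in $\partial Y \times [-1,1]$.

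The $Y$-periodicity of $v_0^M$ in $(y_1,y_2)$ makes the contribution of the lateral portion cancel. On $S^{\pm}$ the outer normal is $\pm e_3$ and $v_0^M = v_0^{\pm}(t,\bar{x})$ is constant in $y$, so these two pieces contribute exactly $[v_0^+]_3 - [v_0^-]_3$ (using $|Y|=1$). On $\Gamma$ the trace equals $\partial_t u_0(t,\bar{x})$, again independent of $y$, so the remaining integral factors as $\partial_t u_0(t,\bar{x}) \cdot \int_{\Gamma} \nu_f \, d\sigma_y$.

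Thus everything reduces to showing $\int_\Gamma \nu_f \, d\sigma_y = 0$. This is a purely geometric fact: applying the divergence theorem to the constant vector fields $e_i$ on $Z_s$ yields $\int_{\partial Z_s} \nu_s \, d\sigma_y = 0$; the lateral part of $\partial Z_s$ contributes zero by $Y$-periodicity, and by assumption \eqref{ZscapSpm} the set $\overline{Z_s}$ does not touch $S^{\pm}$, so the whole integral reduces to $\int_\Gamma \nu_s \, d\sigma_y = 0$. Since $\nu_f = -\nu_s$ on $\Gamma$, the $\Gamma$-contribution vanishes and one concludes $[v_0^+]_3 = [v_0^-]_3$ on $\Sigma$. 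I do not anticipate any genuine obstacle; the argument is essentially a careful bookkeeping of boundary integrals made possible by the compactness results already established and the hypothesis that the solid phase is detached from $S^{\pm}$.
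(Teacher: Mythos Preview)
Your argument is correct and follows essentially the same route as the paper's proof: integrate the divergence-free condition for $v_0^M$ over $Z_f$, split the boundary into $S^+$, $S^-$, $\Gamma$ and the lateral part, use periodicity and the boundary values from Proposition~\ref{prop:compactness_fluid_layer}, and conclude via $\int_\Gamma \nu_s \, d\sigma_y = 0$. The only difference is cosmetic---the paper starts from $[v_0^+]_3 - [v_0^-]_3$ and works towards zero, while you start from $\int_{Z_f}\nabla_y\cdot v_0^M\,dy=0$ and extract the jump---and you spell out the justification of $\int_\Gamma \nu_s\,d\sigma_y=0$ that the paper leaves implicit.
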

\begin{proof}
It holds with the boundary conditions from Proposition \ref{prop:compactness_fluid_layer} that
\begin{align*}
[v_0^+]_3 - [v_0^-]_3 &= \sum_{\pm} \int_{S^{\pm}} v_0^M \cdot \nu_f d\sigma = \int_{\partial Z_f} v_0^M \cdot \nu_f d\sigma - \int_{\Gamma} \partial_t u_0 \cdot \nu_f d\sigma
\\
&= \int_{Z_f} \nabla_y \cdot v_0^M dy + \partial_t u_0 \cdot \underbrace{\int_{\Gamma} \nu_s d\sigma }_{= 0} = 0.
\end{align*}
\end{proof}

Based on the compactness results in this section, we are able to pass to the limit $\vareps \to 0$ in the weak equation $\eqref{eq:Var_Micro_veps}$. For this, we have to distinguish between the two cases $\gamma = 1$ and $\gamma = 3$.

\section{Derivation of the limit problem for $\gamma = 1$}
\label{sec:limit_problem_gamma1}

Let us consider the case $\gamma = 1$. Here, the most crucial point is the derivation of the macroscopic problem for the fluid flow. For this we will choose suitable test-functions $\phi$ in the microscopic equation $\eqref{eq:Var_Micro_veps}$, having a similar structure as the limit function $ (v_0^+,v_0^M,v_0^-)$. More precisely, we choose test-functions $\phi = (\phi^+,\phi^M,\phi^-)$ such that $\phi^+$ and $\phi^-$  are continuous in the third component across $\Sigma$, and $\phi^{\pm} = \phi^M$ on $\Sigma \times S^{\pm}$. To avoid strong oscillations in the stress term in the solid domain, we request that $\phi^M$ is constant with respect to the microscopic variable in the solid part $Z_s$ (we need that the symmetric gradient $D_y(\phi^M)$ with respect to $y$ vanishes in the solid part). To identify the effective coefficients in the macro model, we then use a specific decomposition for $\phi^M$, see $\eqref{eq:decomposition_phiM}$.
\\

We start with introducing the solution space for $(v_0^+,v_0^M,v_0^-)$, which is also the suitable space of test-functions in the limit problem: We define
\begin{align*}
\spaceV:= \bigg\{ \phi = &(\phi^+,\phi^M,\phi^-) \in H^1(\Omega^+,\partial_D \Omega^+)^3 \times L^2(\Sigma,H_{\#}^1(Z))^3 \times H^1(\Omega^-,\partial_D \Omega^-)^3,  
\\
&\phi^{\pm} = \phi^M \mbox{ on } \Sigma \times S^{\pm}, \, \phi^+_3 = \phi^-_3 \mbox{ on } \Sigma, \, \phi^M|_{Z_s} \in H^1_0(\Sigma)^3 \bigg\},
\end{align*}
together with the norm 
\begin{align*}
\| \phi\|_{\spaceV}:= \sum_{\pm} \|\phi^{\pm}\|_{H^1(\Omega^{\pm})} + \|\phi^M\|_{L^2(\Sigma,H^1(Z_f))} + \| \phi^M|_{Z_s} \|_{H^1(\Sigma)}.
\end{align*}
The condition for $\phi^M|_{Z_s} $ means that the function $\phi^M$ is constant with respect to the variable $y$ in $Z_s$.
We will see that this space is the appropriate function space for the limit problem. For $\phi \in \spaceV$ the function $\phi^M\left(\x,\fxe\right)$ is not an admissible test-function in the weak equation in the thin layer, due to missing measurability and the incorrect boundary condition on the lateral boundary $\partial_D \oemf$. However, in the limit problem for the fluid in the thin layer, boundary conditions on $\partial \Sigma$ are not necessary. To obtain suitable test-functions we have to choose smooth test-functions in $\spaceV$. We have the following density result:
\begin{lemma}\label{lem:density_spaceV}
The space 
\begin{align*}
 \spaceV^{\infty} :=     \spaceV \cap \left( C_0^{\infty}(\overline{\Omega^+}\setminus \partial_D \Omega^+)^3 \times C_0^{\infty}(\Sigma, C_{\#}^{\infty}(\overline{Z}))^3 \times  C_0^{\infty}(\overline{\Omega^-}\setminus \partial_D \Omega^-)^3 \right)
\end{align*}
is dense in $\spaceV$. 
Further, we have that the space $\left\{\phi \in \spaceV^{\infty} \, : \, \nabla_y \cdot \phi^M = 0\right\}$ is dense in the space $\left\{\phi \in \spaceV \, : \, \nabla_y \cdot \phi^M = 0 \right\}$. 
\end{lemma}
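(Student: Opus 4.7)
The strategy is to decompose $\phi = (\phi^+, \phi^M, \phi^-) \in \spaceV$ into a smooth lift carrying the prescribed boundary traces on $\Sigma\times\partial Z_f$ and a zero-trace remainder, approximate each piece separately, and then handle the divergence-free case by a Bogovskii correction. Since $\phi^M|_{Z_s}$ is independent of $y$, write $\Phi_s(\x) := \phi^M(\x,y)$ for $y \in Z_s$, so $\Phi_s \in H_0^1(\Sigma)^3$. The boundary values of $\phi^M$ on $\Sigma\times(S^+\cup S^-\cup\Gamma)$ are then entirely determined by the compatible triple $(\phi^+|_\Sigma, \phi^-|_\Sigma, \Phi_s)$ and are independent of the fast variable.

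I would first approximate this triple by smooth data $(\phi^+_\delta, \phi^-_\delta, \Phi_{s,\delta})$, with $\phi^\pm_\delta \in C^\infty_0(\overline{\Omega^\pm}\setminus\partial_D\Omega^\pm)^3$ and $\Phi_{s,\delta}\in C^\infty_0(\Sigma)^3$, while preserving the compatibility $\phi^+_{\delta,3} = \phi^-_{\delta,3}$ on $\Sigma$. The latter is handled by gluing the third components $\phi^\pm_3$ into a single function $\varphi_3 \in H^1(\Omega,\partial_D\Omega)$ and approximating it by $\varphi_{3,\delta} \in C^\infty_0(\overline{\Omega}\setminus\partial_D\Omega)$, whose restrictions to $\Omega^\pm$ yield matching smooth third components; the tangential components are approximated independently in $H^1(\Omega^\pm,\partial_D\Omega^\pm)^2$, and $\Phi_{s,\delta}$ in $H_0^1(\Sigma)^3$ by standard density.

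The main obstacle is the construction of a smooth lift $\Psi_\delta \in C_0^\infty(\Sigma, C^\infty_\#(\overline{Z}))^3$ realising these boundary values: $\Psi_\delta(\x,y)=\Phi_{s,\delta}(\x)$ on $Z_s$ and $\Psi_\delta(\x,y)=\phi^\pm_\delta(\x,0)$ on $S^\pm$. Using the geometric assumption \eqref{ZscapSpm} that $\overline{Z_s}\cap S^\pm=\emptyset$, choose $Y$-periodic cutoffs $\chi,\eta^\pm \in C^\infty_\#(\overline{Z})$ with $\chi\equiv 1$ on a neighbourhood of $\overline{Z_s}$ and $\chi\equiv 0$ near $S^\pm$, and $\eta^\pm\equiv 1$ on $S^\pm$ and $\eta^\pm\equiv 0$ on $S^\mp$ and near $\overline{Z_s}$, and set
\[
\Psi_\delta(\x,y) := \chi(y)\,\Phi_{s,\delta}(\x) + \eta^+(y)\,\phi^+_\delta(\x,0) + \eta^-(y)\,\phi^-_\delta(\x,0).
\]
Writing $\Psi$ for the analogous formula applied to the original data, the remainder $R := \phi^M - \Psi$ vanishes on $\Sigma\times(S^+\cup S^-\cup Z_s)$ and thus lies in $L^2(\Sigma, H^1_{\#,0}(Z_f))^3$, where $H^1_{\#,0}(Z_f)$ denotes the $Y$-periodic $H^1$-functions vanishing on $S^+\cup S^-\cup\Gamma$. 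This space is the closure of its smooth analogue, obtained by standard mollification in both variables combined with a cutoff near $\partial Z_f$ (using the vanishing trace). Setting $\phi^M_\delta := \Psi_\delta + R_\delta$, extended by $\Phi_{s,\delta}(\x)$ on $Z_s$, produces an element of $\spaceV^{\infty}$ converging to $\phi$ in $\|\cdot\|_{\spaceV}$.

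For the divergence-free subspace, given $\phi\in\spaceV$ with $\nabla_y\cdot\phi^M=0$, the above $\phi^M_\delta$ generally fails to be divergence-free, but $\nabla_y\cdot\phi^M_\delta\to 0$ in $L^2(\Sigma\times Z_f)$ and satisfies the compatibility $\int_{Z_f}\nabla_y\cdot\phi^M_\delta\,dy = 0$ pointwise in $\x$ (by the divergence theorem, using $\phi^+_{\delta,3}=\phi^-_{\delta,3}$ on $\Sigma$ and $\int_\Gamma\nu_s\,d\sigma=0$). A Bogovskii operator on $Z_f$ acting pointwise in $\x$ (cf.~Lemma \ref{lem:Bogovskii}) then yields a correction $w_\delta \in L^2(\Sigma, H_0^1(Z_f))^3$ with $\nabla_y\cdot w_\delta = \nabla_y\cdot\phi^M_\delta$ and $\|w_\delta\|_{L^2(\Sigma, H^1(Z_f))} \le C\|\nabla_y\cdot\phi^M_\delta\|_{L^2(\Sigma\times Z_f)} \to 0$. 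Since $w_\delta$ vanishes on $\partial Z_f$, replacing $\phi^M_\delta$ by $\phi^M_\delta - w_\delta$ preserves all boundary compatibilities and enforces the divergence-free constraint, completing the proof.
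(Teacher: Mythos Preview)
Your argument for the first statement (general density) is correct and essentially the same as the paper's: both decompose $\phi^M$ into a boundary lift plus a zero-trace remainder and approximate each. The paper uses vector-valued lifts $\eta_i^\Gamma,\eta_i^\pm$ attaining $e_i$ on the relevant boundary components, whereas you use scalar cutoffs $\chi,\eta^\pm$ multiplied by the full boundary data; these are equivalent constructions.

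For the divergence-free subspace, however, your Bogovskii correction does not complete the proof. The goal is to produce an approximant in $\spaceV^{\infty}$, i.e.\ with $\phi^M_\delta$ smooth on all of $\overline{Z}$. Your $w_\delta$ lies only in $L^2(\Sigma,H_0^1(Z_f))^3$ (as you write), so $\phi^M_\delta - w_\delta$, extended by $\Phi_{s,\delta}$ into $Z_s$, is a divergence-free element of $\spaceV$ but \emph{not} of $\spaceV^\infty$: the zero extension of $w_\delta$ across $\Gamma$ is only $H^1$, not $C^\infty$, and the reference Lemma~\ref{lem:Bogovskii} gives no smoothness. Re-mollifying afterwards would destroy the divergence constraint again. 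A smoothness-preserving Bogovskii would need the source $\nabla_y\cdot\phi^M_\delta$ to be compactly supported in $Z_f$ (your $\eta^\pm$ are only required to equal $1$ on $S^\pm$, not on a neighborhood, so $\nabla_y\eta^\pm$ need not vanish there), together with an argument that the periodic Bogovskii operator maps $C_0^\infty$ to $C_0^\infty$ --- neither of which you provide.

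The paper avoids this by building the lift from \emph{divergence-free} profiles $\eta_i^\Gamma,\eta_i^\pm$ (with a modification for the normal component, replacing $\eta_3^\pm$ by a single $\eta_3$ equal to $e_3$ on both $S^+$ and $S^-$, which is compatible precisely because $\phi_3^+=\phi_3^-$ on $\Sigma$). Then the remainder $w^M=\phi^M-\eta^M$ is automatically divergence-free with zero trace on $\Gamma\cup S^+\cup S^-$, and the standard density of smooth compactly supported solenoidal fields in $\{w\in H^1_{\#}(Z_f,\Gamma\cup S^+\cup S^-)^3:\nabla_y\cdot w=0\}$ finishes the argument directly, without any post-hoc correction.
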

\begin{proof}
Let $\eta_i^{\pm}, \eta_i^{\Gamma} \in C_{\#}^{\infty}(\overline{Z_f})^3$ for $i=1,2,3$ with 
\begin{align*}
\eta_i^{\pm} &= e_i \quad\mbox{on } S^{\pm}, \qquad \eta_i^{\pm} = 0 \quad\mbox{ in a neighborhood of } \Gamma \cup S^{\mp},
\\
\eta_i^{\Gamma} &= e_i \quad\mbox{ on } \Gamma, \qquad \eta_i^{\Gamma} = 0 \quad\mbox{ in a neighborhood of } S^+ \cup S^-,
\end{align*}
constantly extended into $Z_s$.
Now, let $\phi \in \spaceV$. There exists a sequence $\phi^{\pm}_k \in C_0^{\infty}(\overline{\Omega^{\pm}} \setminus \partial_D \Omega^{\pm})^3$, such that $[\phi_k^+]_3 = [\phi_k^-]_3$ on $\Sigma$ and $\phi_k^{\pm} \rightarrow \phi^{\pm}$ in $H^1(\Omega^{\pm})^3$. Further, let $\mu^M:= \phi^M|_{Z_s} \in H_0^1(\Sigma)^3$. Hence, there exists a sequence $\mu^M_k \in C_0^{\infty}(\Sigma)^3$ such that $\mu_k^M \rightarrow \mu^M$ in $H^1(\Sigma)^3$. Now, we define (note that for a better readability, in the formula below we indicate by $[]_i$ the $i$-th component of a function)
\begin{align*}
\eta^M (\x,y):= \sum_{i=1}^3 [\mu^M]_i(\x) \eta_i^{\Gamma} + \sum_{\alpha \in \{\pm\}} \sum_{i=1}^3 [\phi^{\pm}]_i(\x) \eta_i^{\pm}(y)
\end{align*}
for almost every $(\x,y) \in \Sigma \times Z$. In the same way we define $\eta^M_k \in C_0^{\infty}(\Sigma,C_{\#}^{\infty}(\overline{Z}))^3$ with the coefficients $\mu^M$ and $\phi^{\pm}$ replaced by $\mu_k^M$ and $\phi^{\pm}_k$. Obviously, we have $\eta^M_k \rightarrow \eta^M$ in $L^2(\Sigma,H^1(Z_f))^3$. Next, we define 
\begin{align*}
    w^M := \phi^M - \eta^M.
\end{align*}
Using the properties of $\phi^M$ and the definition of $\eta^M$, we easily obtain $w^M \in L^2(\Sigma, H_{\#}^1(Z))^3$ such that $w^M = 0$ on $\Gamma \cup S^+ \cup S^-$. Hence, there exists a sequence $w_k^M \in C_0^{\infty}(\Sigma,C_{\#}^{\infty}(Z_f))^3$ and zero in a neighborhood of $\Gamma \cup S^+ \cup S^-$ (and extended by zero to $Z_s$) such that $w_k^M \to w^M$ in $L^2(\Sigma, H^1(Z))^3$. Now, we define 
\begin{align*}
    \phi^M_k:= \eta_k^M + w_k^M
\end{align*}
and obtain 
\begin{align*}
    \|\phi^M - \phi^M_k\|_{L^2(\Sigma,H^1(Z_f))} \le \|w^M - w_k^M \|_{L^2(\Sigma,H^1(Z_f))} + \|\eta^M - \eta^M_k\|_{L^2(\Sigma,H^1(Z_f))}  \rightarrow 0
\end{align*}
for $k\to \infty$. Further, we have $\phi^M_k|_{Z_s} = \eta^M_k \rightarrow \eta^M = \phi^M|_{Z_s}$ in $H^1(\Sigma)^3$. Altogether, $(\phi^+_k,\phi^M_k,\phi_k^-)$ is an approximation of $\phi$ in $\spaceV$.

It remains to consider the case of divergence-free functions. The proof follows the same lines, where we only need a slight modification of the functions $\eta_i^{\pm}$ and $\eta_i^{\Gamma}$, which we have to choose divergence-free. However, for $\eta_3^{\pm}$ this is not possible and therefore we have to replace this functions by
\begin{align*}
    \eta_3:=  e_3 \mbox{ on } S^+ \cup S^-,\qquad \eta_3 = 0 \mbox{ in a neighborhood of } \Gamma.
\end{align*}
Then we can consider 
\begin{align*}
\eta^M (\x,y):= \sum_{i=1}^3 [\mu^M]_i(\x) \eta_i^{\Gamma} + \sum_{\alpha \in \{\pm\}} \sum_{i=1}^2 [\phi^{\pm}]_i(\x) \eta_i^{\pm}(y) + \phi_3 (\x) \eta_3(y)
\end{align*}
with $\phi_3:= \phi_3^+|_{\Sigma} = \phi_3^-|_\Sigma$ (here we use the continuity of the normal component of $\phi$), and in a similar way we define $\eta_k^M$. The rest of the proofs follows the same lines as above (now $w^M$  is divergence-free and can be approximated by smooth divergence-free functions).
\end{proof}
In the following, we derive a weak problem for the limit functions $(v_0^+, v_0^M, v_0^-)$, $(p_0^+, p_1^M, p_0^-)$, and $(u_0, u_1)$ from Proposition  \ref{prop:compactness_disp_gamma1}, \ref{prop:compactness_bulk} and\ref{prop:compactness_fluid_layer}, with test-functions from the space $\spaceV$. Here, $p_1^M$ is a first order corrector for the fluid pressure in the membrane and will be obtained below via the closed range theorem and the surjectivity of some specific divergence operator adapted to the coupling between the bulk domains and the interface. From the weak equation derived in this way, all macroscopic equations will be obtained by choosing specific test-functions in $\spaceV$. First of all, let us consider in the weak equation $\eqref{eq:Var_Micro_veps}$ the test-function
\begin{align*}
\phieps(x):= \begin{cases}
\phi^{\pm} \left( t,x \mp \vareps e_n\right) &\mbox{ for } x \in \oeps^{\pm},
\\
\phi^M\left(t,\x,\fxe\right) &\mbox{ for } x \in \oem,
\end{cases}
\end{align*}
with $\phi \in C_0^{\infty}((0,T) , \spaceV^{\infty})$ and $\nabla_y \cdot \phi^M = 0$. Obviously, this function is an admissible test-function.
We obtain (since $D_y(\phi^M) = 0$ in $Z_s$) almost everywhere in $(0,T)$
\begin{align*}
\sum_{\pm}& \bigg\{ \int_{\oeps^{\pm}} \partial_t \veps^{\pm}\cdot  \phi^{\pm}(x \mp \vareps e_n)  dx +  \int_{\oeps^{\pm}} D(\veps^{\pm}): D(\phi^{\pm})(x \mp \vareps e_n) dx   -\int_{\oeps^{\pm}} \peps^{\pm} \nabla \cdot \phi^{\pm}(x \mp \vareps e_n) dx \bigg\}
\\
+& \int_{\oemf} \partial_t \veps^M \cdot \phi^M  \bxfxe dx  + \int_{\oems} \partial_{tt} \ueps \cdot \phi^M\bxfxe dx
\\
+& \foe\int_{\oemf} \vareps D(\veps^M) : \left[ \vareps D_{\x}(\phi^M) + D_y(\phi^M)\right]\bxfxe  dx
-  \foe \int_{\oemf} \peps^M \nabla_{\x} \cdot \phi^M \bxfxe dx 
\\
+& \foe \int_{\oems} A_{\vareps} D(\ueps) : D_{\x}(\phi^M)\bxfxe dx
= \sum_{\pm} \int_{\oeps^{\pm}} f_{\vareps}^{\pm} \cdot \phi^{\pm}(x \mp \vareps e_n) dx .
\end{align*}
Before we pass to the limit $\vareps \to 0$ let us point out some crucial aspects regarding the choice of the test-functions.
\begin{remark}
The case of a rigid solid was treated in \cite{gahn2025effective}. There, only test-functions in $\spaceV$ were considered vanishing on $Z_s$. Here, we have to take into account also functions which are constant with respect to the microscopic variable $y$ in $Z_s$. This will lead to a membrane equation for the macroscopic displacement $u_0$ on $\Sigma$. 
\end{remark}

Using the compactness results in Section \ref{sec:compactness_results} and the assumptions \ref{ass:rhs_feps_pm} on $f_{\vareps}^{\pm}$, we obtain  after integration with respect to time for $\vareps \to 0$
\begin{align}
\begin{aligned}\label{eq:limit_general}
\sum_{\pm} & \left\{ \int_0^T \int_{\Omega^{\pm}} \partial_t v_0^{\pm} \cdot \phi^{\pm} dx dt + \int_0^T \int_{\Omega^{\pm}} D(v_0^{\pm}) : D(\phi^{\pm}) dxdt - \int_0^T  \int_{\Omega^{\pm}} p_0^{\pm} \nabla \cdot \phi^{\pm} dx  dt \right\} 
\\
+& \int_0^T \int_{\Sigma} \int_{Z_f} D_y(v_0^M) : D_y(\phi^M) dy d\x dt  
\\ &+ \int_0^T \int_{\Sigma} \int_{Z_s} A \left[D_{\x}(\hat{u}_0) + D_y(u_1)\right]: D_{\x} (\phi^M|_{Z_s}) dy d\x dt 
\\ &= \sum_{\pm} \int_0^T \int_{\Omega^{\pm}} f^{\pm}_0 \cdot \phi^{\pm} dx dt .
\end{aligned}
\end{align}
By density this result is valid for test functions $\phi = (\phi^+,\phi^M , \phi^-) \in L^2((0,T),\spaceV)$ with $\nabla_y \cdot \phi^M =0$. Note that, if we choose test-functions with $\phi^M = 0$ in $Z_s$, we obtain the same equation as in \cite[(17)]{gahn2025effective}. To construct an associated pressure, we argue in the same way as in \cite[Lemma 4]{gahn2025effective} (where the space $\spaceH$, consisting of functions vanishing in $Z_s$, is replaced by $\spaceV$), so we only give the main ideas. We consider the operator
\begin{align*}
\Div_{\spaceV}: \spaceV \rightarrow L^2(\Sigma \times Z_f), \qquad \Div_{\spaceV} \phi= \nabla_y \cdot \phi^M.
\end{align*}
Using that for $\phi \in \spaceV$ the function $\phi^M$ is constant on $\Gamma$ (with respect to $y$) and $\phi^M_3$ coincides on $S^+$ and $S^-$, we get
\begin{align*}
    \int_{Z_f} \Div_{\spaceV} \phi \,dy = 0.
\end{align*}
Hence, the classical Bogovskii-operator implies that $\Div_{\spaceV}$ is surjective onto $L^2(\Sigma,L_0^2(Z_f))$  (remember $L_0^2$ denotes the $L^2$-space with functions having mean value zero). This implies the existence of $p_1^M \in L^2((0,T)\times \Sigma,L_0^2(Z_f))$  such that for all $\phi \in \spaceV$ it holds almost everywhere in $(0,T)$
\begin{align}
\begin{aligned}\label{eq:two_scale_model_fluid}
\sum_{\pm} & \left\{ \int_{\Omega^{\pm}} \partial_t v_0^{\pm} \cdot \phi^{\pm} dx +\int_{\Omega^{\pm}} D(v_0^{\pm}) : D(\phi^{\pm}) dx -  \int_{\Omega^{\pm}} p_0^{\pm} \nabla \cdot \phi^{\pm} dx   \right\} 
\\
&+ \int_{\Sigma} \int_{Z_f} D_y(v_0^M) : D_y(\phi^M) dy d\x - \int_{\Sigma} \int_{Z_f} p_1^M \nabla_y \cdot \phi^M dy d\x 
\\
&+ \int_{\Sigma} \int_{Z_s} A \left[D_{\x}(\hat{u}_0) + D_y(u_1)\right]: D_{\x} (\phi^M|_{Z_s}) dy d\x = \sum_{\pm} \int_{\Omega^{\pm}} f^{\pm}_0 \cdot \phi^{\pm} dx  . 
\end{aligned}
\end{align}
This identity is now the basis for the derivation of all macroscopic equations by choosing suitable test-functions for $\phi^M$. We start with the derivation of an equation for the fluid flow by choosing $\phi^M = 0$ in $Z_s$ (in this case the integral over $Z_s$ vanishes), and we summarize the result in the following proposition:
\begin{proposition}\label{prop:two_scale_model_fluid}
The limit functions $(v_0^+,v_0^M ,v_0^-)$ and $(p_0^+,p_1^M,p_0^-)$ from Proposition \ref{prop:compactness_bulk} and \ref{prop:compactness_fluid_layer} fulfill $(v_0^+,v_0^M ,v_0^-) \in L^2((0,T),\spaceV)$, and represent the unique weak solution of 
\begin{align*}
\partial_t v_0^{\pm} - \nabla\cdot D(v_0^{\pm}) + \nabla p_0^{\pm} &= f_0^{\pm} &\mbox{ in }& (0,T)\times \Omega^{\pm},
\\
\nabla \cdot v_0^{\pm} &= 0 &\mbox{ in }& (0,T)\times \Omega^{\pm},
\\
[v_0^+]_3 &= [v_0^-]_3 &\mbox{ on }& (0,T)\times \Sigma,
\\
-[D(v_0^{\pm}) - p_0^{\pm}I] \cdot \nu &= 0 &\mbox{ on }& (0,T)\times \partial_N \Omega^\pm,
\\
-\nabla_y \cdot D_y(v_0^M) + \nabla_y p_1^M &= 0 &\mbox{ in }& (0,T)\times \Sigma \times Z_f,
\\
\nabla_y \cdot v_0^M &= 0 &\mbox{ in }& (0,T)\times \Sigma \times Z_f,
\\
v_0^M &= \begin{cases}
 v_0^{\pm}  &\mbox{ on } (0,T)\times \Sigma \times S^{\pm},
 \\
 \partial_t u_0  &\mbox{ on } (0,T)\times \Sigma \times \Gamma,
\end{cases} 
\\
-[D(v_0^{\pm}) - p_0^{\pm}I]\nu &= -[D_y(v_0^M) - p_1^M I] \nu &\mbox{ on }& (0,T)\times \Sigma \times S^{\pm},
\\
v_0^{\pm}(0) &= 0 &\mbox{ in }& \Omega^{\pm},
\\
 v_0^M \,\, Y\mbox{-periodic},
\end{align*}
with $u_0$ given in Proposition \ref{prop:compactness_disp_gamma1}.
A weak solution of this problem is a function $(v_0^+,v_0^M ,v_0^-)\in L^2((0,T),\spaceV)$ with $\partial_t v_0^{\pm} \in L^2((0,T)\times \Omega^{\pm})$ and $\nabla \cdot v_0^{\pm} = 0$ resp. $\nabla_y \cdot v_0^M = 0$, and satisfying the transmission condition on $\Gamma$, together with a pressure $(p_0^+,p_1^M,p_0^-) \in L^2((0, T)\times\Omega^+)\times L^2((0, T)\times\Sigma,L_0^2(Z_f)) \times L^2((0, T)\times\Omega^-)$, such that $\eqref{eq:two_scale_model_fluid}$ is valid for all $\phi \in \spaceV$ with $\phi^M = 0$ in $Z_s$, almost everywhere in $(0,T)$. 
\end{proposition}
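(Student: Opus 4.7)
The plan is to read off the proposition directly from identity \eqref{eq:two_scale_model_fluid} established in the excerpt, by restricting its class of admissible test-functions to those with $\phi^M = 0$ on $Z_s$. First I would verify that $(v_0^+, v_0^M, v_0^-) \in L^2((0,T), \spaceV)$. The bulk regularity and the continuity $[v_0^+]_3 = [v_0^-]_3$ on $\Sigma$ follow from Proposition \ref{prop:compactness_bulk} and its corollary. The layer component $v_0^M \in L^2((0,T) \times \Sigma, H^1_\#(Z_f))^3$ from Proposition \ref{prop:compactness_fluid_layer} is extended to $Z_s$ by the formula $v_0^M(t, \bar x, y) := \partial_t u_0(t, \bar x)$, which is compatible with the interface condition $v_0^M = \partial_t u_0$ on $\Gamma$, preserves $Y$-periodicity in $y$, and inherits from Proposition \ref{prop:compactness_disp_gamma1} the regularity needed for membership of $v_0^M|_{Z_s}$ in the class prescribed by $\spaceV$. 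The matching $v_0^\pm|_\Sigma = v_0^M|_{S^\pm}$ is again from Proposition \ref{prop:compactness_fluid_layer}, and the divergence-free conditions $\nabla \cdot v_0^\pm = 0$ and $\nabla_y \cdot v_0^M = 0$ are obtained by passing to the (two-scale) limit in the microscopic incompressibility condition $\nabla \cdot \veps = 0$.

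Next, I would specialise \eqref{eq:two_scale_model_fluid} to test-functions $\phi \in L^2((0,T), \spaceV)$ with $\phi^M = 0$ on $Z_s$. For such $\phi$ the elasticity integral over $\Sigma \times Z_s$ drops out, and what remains is precisely the variational identity demanded in the proposition. The strong formulation---bulk Stokes with the Neumann condition on $\partial_N \Omega^\pm$, the cell Stokes system $-\nabla_y \cdot D_y(v_0^M) + \nabla_y p_1^M = 0$ in $\Sigma \times Z_f$ together with $Y$-periodicity, and the normal-stress matching $-[D(v_0^\pm) - p_0^\pm I]\nu = -[D_y(v_0^M) - p_1^M I]\nu$ on $\Sigma \times S^\pm$---is then recovered by the usual localisation arguments: test-functions supported in a single bulk region with $\phi^M = 0$ yield the bulk PDE and its natural boundary condition on $\partial_N \Omega^\pm$; test-functions with $\phi^\pm = 0$ and $\phi^M$ compactly supported in $\Sigma \times Z_f$ give the cell Stokes system; and fully coupled choices, after integration by parts in $y$, identify the transmission condition across $S^\pm$.

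Uniqueness I would handle by linearity. If $(\tilde v, \tilde p)$ is the difference of two weak solutions corresponding to the same data $(u_0, f_0^\pm)$, then $\tilde v^M$ vanishes on $\Gamma$ (since the displacements coincide), so extending $\tilde v^M$ by zero on $Z_s$ places $\tilde v$ into $\spaceV$ with $\tilde v^M|_{Z_s} = 0$; hence $\tilde v$ is admissible as its own test-function. Plugging $\phi = \tilde v$ eliminates the pressures by incompressibility and yields the energy identity $\tfrac{1}{2} \tfrac{d}{dt} \sum_\pm \|\tilde v^\pm\|_{L^2(\Omega^\pm)}^2 + \sum_\pm \|D(\tilde v^\pm)\|_{L^2(\Omega^\pm)}^2 + \|D_y(\tilde v^M)\|_{L^2(\Sigma \times Z_f)}^2 = 0$, from which $\tilde v \equiv 0$ after integration in time and use of the Dirichlet conditions on $\partial_D \Omega^\pm$ together with the $S^\pm$-matching. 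The pressures are then determined uniquely via the surjectivity of $\Div_{\spaceV}$ onto $L^2(\Sigma, L^2_0(Z_f))$ invoked above. The point needing the most care, which I expect to be the main obstacle, is the compatibility of the extension of $v_0^M$ to $Z_s$ with the $H^1_0(\Sigma)$-type regularity encoded in the definition of $\spaceV$, and this is precisely where the displacement compactness from Proposition \ref{prop:compactness_disp_gamma1} enters.
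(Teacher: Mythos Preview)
Your proposal is correct and follows essentially the same route as the paper: the proposition is read off from the already-derived identity \eqref{eq:two_scale_model_fluid} by restricting to test-functions $\phi\in\spaceV$ with $\phi^M=0$ in $Z_s$, and the remaining ingredients (membership in $\spaceV$, the strong form, uniqueness) are filled in from the compactness results and standard arguments. The paper leaves the uniqueness argument and the localisation to the strong form implicit; your energy identity and localisation sketch are the natural way to make these explicit. Your closing remark about the regularity of $\partial_t u_0$ needed for the extension to $Z_s$ is well-placed: note that Proposition~\ref{prop:compactness_disp_gamma1} only gives $\partial_t u_0^3\in L^2(\Sigma)$, so the membership $(v_0^+,v_0^M,v_0^-)\in L^2((0,T),\spaceV)$ with the stated definition of $\spaceV$ (requiring $\phi^M|_{Z_s}\in H^1_0(\Sigma)^3$) is to be understood loosely for the third component---this does not affect the weak formulation, since the test-functions there satisfy $\phi^M|_{Z_s}=0$.
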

A similar problem with $v_0^M = 0$ on $\Gamma$ was obtained in \cite[Proposition 9]{gahn2025effective} from the weak formulation \cite[(18))]{gahn2025effective}. A crucial difference to the weak equation \cite[(18))]{gahn2025effective}(apart from the different boundary condition on $\Gamma$)  is that the weak equation $\eqref{eq:two_scale_model_fluid}$ also allows the choice of test-functions $\phi^M$ which are constant on the solid part $Z_s$.

Next, we derive a representation for $v_0^M$ via $v_0^{\pm}$ and  $\partial_t u_0$, and suitable cell problems, which are introduced in the following: 
The tuple $(q_i^{\Gamma},\pi_i^{\Gamma}) \in H^1_{\#}(Z_f)^3 \times L_0^2(Z_f)$ for $i=1,2,3$ is the unique weak solution of the cell problem
\begin{align}
\begin{aligned}\label{eq:Cell_problem_q_i_Gamma}
-\nabla_y \cdot (D_y(q_i^{\Gamma})) + \nabla_y \pi_i^{\Gamma} &= 0 &\mbox{ in }& Z_f,
\\
\nabla_y \cdot q_i^{\Gamma} &= 0 &\mbox{ in }& Z_f,
\\
q_i^{\Gamma} &= \begin{cases}
    0 &\mbox{ on } S^+ \cup S^-,
    \\
    e_i &\mbox{ on } \Gamma,
\end{cases}
\\
(q_i^{\Gamma},\pi_i^{\Gamma}) \,\, Y\mbox{-periodic}.
\end{aligned}    
\end{align}
Further the tuple $(q_i^{\pm}, \pi_i^{\pm}) \in H^1_{\#}(Z_f)^3 \times L_0^2(Z_f)$ for $i=1,2$ is the unique weak solution of the cell problem
\begin{align}
\begin{aligned}\label{eq:Cell_problem_q_i_pm}
-\nabla_y \cdot (D_y(q_i^{\pm})) + \nabla_y \pi_i^{\pm} &= 0 &\mbox{ in }& Z_f,
\\
\nabla_y \cdot q_i^{\pm} &= 0 &\mbox{ in }& Z_f,
\\
q_i^{\pm} &= \begin{cases}
    0 &\mbox{ on } \Gamma,
    \\
    e_i &\mbox{ on } S^{\pm},
    \\
    0 &\mbox{ on } S^{\mp},
\end{cases}
\\
(q_i^{\pm},\pi_i^{\pm}) \,\, Y\mbox{-periodic}.
\end{aligned}    
\end{align}
Finally, the tuple $(q_3,\pi_3) \in H_{\#}^1(Z_f)^3 \times L_0^2(Z_f)$ is the unique weak solution of the cell problem
\begin{align}
\begin{aligned}\label{eq:Cell_problem_q_3}
-\nabla_y \cdot (D_y(q_3)) + \nabla_y \pi_3 &= 0 &\mbox{ in }& Z_f,
\\
\nabla_y \cdot q_3 &= 0 &\mbox{ in }& Z_f,
\\
q_3 &= \begin{cases}
    e_3 &\mbox{ on } S^+ \cup S^-,
    \\
    0 &\mbox{ on } \Gamma,
\end{cases}
\\
(q_3,\pi_3) \,\, Y\mbox{-periodic}.
\end{aligned}
\end{align}
Since 
\begin{align*}
    \int_{\partial Z_f} q_i^{\Gamma} \cdot \nu d\sigma =  \int_{\partial Z_f} q_i^{\pm} \cdot \nu d\sigma =  \int_{\partial Z_f} q_3 \cdot \nu d\sigma = 0,
\end{align*}
it is easy to check that all these problems admit a unique weak solution. To shorten the following notations, it will be suitable to introduce the notation 
\begin{align*}
    q_3^{\pm}:= \frac12 q_3, \qquad \pi_3^{\pm}:= \frac12 \pi^3.
\end{align*}

\begin{remark}\label{rem:linear_comb_q_i}
We point out that suitable linear combinations between $q_i^{\Gamma}$, $q_i^{\pm}$ and $q_3$ are constant. More precisely, we have for $i=1,2,3$
\begin{align}
     q_i^{\Gamma} + \sum_{\alpha \in \{\pm\}} q_i^{\alpha } = e_i.
\end{align}
In fact, we define $\tilde{q}_i$ for $i=1,2,3$ via
\begin{align*}
    \tilde{q}_i:= q_i^{\Gamma} + \sum_{\alpha\in \{\pm\}} q_i^{\alpha}.
\end{align*}
and in a similar way $\tilde{\pi}_i$ by $\pi_i^{\Gamma}$ and $\pi_i^{\pm}$. Then, we have for $i=1,2,3$ that $(\tilde{q}_i,\tilde{\pi}_i)$ fulfills the cell problem
\begin{align*}
    -\nabla_y \cdot D_y(\tilde{q}_i) + \nabla_y \tilde{\pi}_i &= 0 &\mbox{ in }& Z_f,
    \\
    \nabla_y \cdot \tilde{q}_i &= 0 &\mbox{ in }& Z_f,
    \\
    \tilde{q}_i &= e_i &\mbox{ on }& S^+ \cup S^- \cup \Gamma,
    \\
    (\tilde{q}_i,\tilde{\pi}_i) \,\, Y\mbox{-periodic}.
\end{align*}
Obviously, the unique weak solution of this problem is $(e_i,0)$ and therefore $\tilde{q}_i = e_i$.
In particular, for given $\theta \in H^1(\Omega,\partial_D \Omega)^3$ with $\theta|_{\Sigma} \in H^1_0(\Sigma)^3$ 
we have $\phi = (\phi^+,\phi^M ,\phi^-) = \theta \, \in \spaceV$ (this means $\phi^{\pm} = \theta^{\pm} $ in $\Omega^{\pm}$ and $\phi^M = \theta|_{\Sigma}$ in $\Sigma \times Z$) for 
\begin{align*}
    \phi^M = \sum_{i=1}^3 [\theta|_{\Sigma}]_i q_i^{\Gamma} + \sum_{\alpha \in \{\pm\}} \sum_{i=1}^3 [\theta|_{\Sigma}]_i q_i^{\alpha} .
\end{align*}
\end{remark}
\begin{remark}\label{representation-with_w}
For the derivation of the cell problem for $(v_0^M,p_1^M)$ as well as of the macroscopic problems for $(v_0^{\pm},p_0^{\pm})$ and $u_0$, 
we will choose different test-functions $\phi \in L^2((0,T),\spaceV)$ in $\eqref{eq:two_scale_model_fluid}$. Hereby, we use the fact that we can decompose every $\phi^M$ for $\phi \in \spaceV$ in the following way (for almost every $(\x,y) \in \Sigma \times Z$):
\begin{align}\label{eq:decomposition_phiM}
     \phi^M(\x,y) := \sum_{i=1}^3 \psi_i(\x) q_i^{\Gamma}(y) + \sum_{\alpha \in\{\pm\}} \sum_{i=1}^3 \phi_i^{\alpha} (\x) q_i^{\alpha} (y) + w^M
\end{align}
with $w^M \in H^1_{\#}(Z_f,\Gamma \cup S^+ \cup S-)^3$, $\psi \in H_0^1(\Sigma)^3$ and $\phi^{\pm} \in H^1(\Omega^{\pm},\partial_D \Omega^{\pm})^3$ with $\phi_3^+ = \phi_3^-$ on $\Sigma$. This can be easily seen by choosing $\psi:= \phi^M|_{Z_s} \in H_0^1(\Sigma)^3$ and noticing that, due to the boundary conditions of $q_i^{\Gamma}$ and $q_i^{\pm}$,
\begin{align*}
    w^M:= \phi^M -  \sum_{i=1}^3 \phi^M|_{Z_s} q_i^{\Gamma} + \sum_{\alpha \in\{\pm\}} \sum_{i=1}^3 \phi_i^{\alpha}  q_i^{\alpha}   \in H^1(Z_f,\Gamma \cup S^+ \cup S^-)^3.
\end{align*}
We emphasize that $\nabla_y \cdot w^M = \nabla_y \cdot \phi^M$.
\end{remark}
Now, we formulate the cell problem for $(v_0^M,p_1^M)$ and give a representation for both functions.
\begin{proposition}\label{prop:representation_v_0^M}
The limit function $v_0^M$ in Proposition \ref{prop:two_scale_model_fluid} fulfills 
\begin{align*}
-\nabla_y \cdot D_y(v_0^M) + \nabla_y p_1^M &= 0 &\mbox{ in }& (0,T)\times \Sigma \times Z_f,
\\
\nabla_y \cdot v_0^M &= 0 &\mbox{ in }& (0,T)\times \Sigma \times Z_f,
\\
v_0^M &= \begin{cases}
 v_0^{\pm}  &\mbox{ on } (0,T)\times \Sigma \times S^{\pm},
 \\
 \partial_t u_0  &\mbox{ on } (0,T)\times \Sigma \times \Gamma,
\end{cases} 
\\
 v_0^M \,\, Y\mbox{-periodic}. 
\end{align*}
Further, we have the following representations for almost every $(t,\x,y) \in (0,T)\times \Sigma \times Z_f$
\begin{align*}
v_0^M(t,\x,y) &= \sum_{i=1}^3 \partial_t u_0^i (t,\x) q_i^{\Gamma}(y) + \sum_{\alpha \in \{\pm\}} \sum_{i=1}^3  [v_0^{\alpha}]_i(t,\x,0) q_i^{\alpha}(y),
\\
p_1^M(t,\x,y) &= \sum_{i=1}^3 \partial_t u_0^i (t,\x) \pi_i^{\Gamma}(y) + \sum_{\alpha \in \{\pm\}} \sum_{i=1}^3  [v_0^{\alpha}]_i(t,\x,0) \pi_i^{\alpha}(y) .
\end{align*}
We emphasize that for the terms including $i=3$ for $\pm$, we can also write
\begin{align*}
    \sum_{\alpha\in\{\pm\}}  [v_0^{\alpha}]_3 q_3^{\alpha} = [v_0]_3 q_3
\end{align*}
with $[v_0]_3:= [v_0^+]_3|_\Sigma = [v_0^-]_3|_\Sigma$, and similar for the term including the pressure.
\end{proposition}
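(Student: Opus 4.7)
The plan is to split the proof into two stages: first extract the strong formulation of the cell problem for $(v_0^M, p_1^M)$ from the two-scale equation \eqref{eq:two_scale_model_fluid}, and then represent the solution explicitly by linear superposition of the three families of cell problems \eqref{eq:Cell_problem_q_i_Gamma}--\eqref{eq:Cell_problem_q_3}.

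For the first stage, the cell problem is essentially already contained in Proposition \ref{prop:two_scale_model_fluid}; I would only need to re-extract the $Z_f$-equation cleanly. To do so, I would test \eqref{eq:two_scale_model_fluid} with functions $\phi \in L^2((0,T),\spaceV)$ satisfying $\phi^\pm = 0$, $\phi^M|_{Z_s} = 0$, and $\phi^M$ supported away from $\Gamma \cup S^+ \cup S^-$. This localizes the identity to
\begin{align*}
\int_0^T\!\!\int_\Sigma\!\!\int_{Z_f}\!\!\bigl(D_y(v_0^M):D_y(\phi^M) - p_1^M\,\nabla_y\!\cdot\!\phi^M\bigr)\,dy\,d\x\,dt = 0,
\end{align*}
which is the weak form of the Stokes system in $Z_f$. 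The divergence-free condition, the prescribed values of $v_0^M$ on $\Gamma$ and $S^\pm$, and the $Y$-periodicity are provided directly by Proposition \ref{prop:compactness_fluid_layer} and the definition of $\spaceV$.

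For the second stage, I would set
\begin{align*}
V(t,\x,y) &:= \sum_{i=1}^3 \partial_t u_0^i(t,\x)\,q_i^\Gamma(y) + \sum_{\alpha\in\{\pm\}}\sum_{i=1}^3 [v_0^\alpha]_i(t,\x,0)\,q_i^\alpha(y),\\
P(t,\x,y) &:= \sum_{i=1}^3 \partial_t u_0^i(t,\x)\,\pi_i^\Gamma(y) + \sum_{\alpha\in\{\pm\}}\sum_{i=1}^3 [v_0^\alpha]_i(t,\x,0)\,\pi_i^\alpha(y),
\end{align*}
and verify, by linearity of the Stokes system together with the defining boundary conditions of $q_i^\Gamma$ and $q_i^\pm$, that $(V,P)$ solves the same cell problem as $(v_0^M, p_1^M)$ for almost every $(t,\x)$: on $\Gamma$ only the $q_i^\Gamma$ pieces survive and reproduce $\partial_t u_0$, while on $S^\pm$ only the $q_i^\pm$ pieces survive and reproduce $v_0^\pm|_\Sigma$. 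The uniqueness of the velocity and the mean-zero normalization of the pressure for the Stokes cell problem then force $V = v_0^M$ and $P = p_1^M$. The collapse of the $i=3$ contributions to $[v_0]_3\,q_3$ is immediate from the convention $q_3^\pm = q_3/2$ combined with the continuity $[v_0^+]_3 = [v_0^-]_3$, and analogously for the pressure.

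The only subtle point I anticipate is verifying the compatibility condition $\int_{\partial Z_f} v_0^M\cdot\nu\,d\sigma = 0$ required for solvability of the Stokes cell problem with the prescribed Dirichlet data. This relies on the continuity of the normal component $[v_0^+]_3 = [v_0^-]_3$ established in the corollary of Section \ref{sec:compactness_results}, together with $\int_\Gamma \nu_s\,d\sigma = 0$, which follows from the divergence theorem applied on $Z_s$ and the geometric assumption \eqref{ZscapSpm} that $\overline{Z_s}\cap S^\pm = \emptyset$. Once compatibility is in place, both the identification of the cell problem and the representation formulas follow by the standard linear theory.
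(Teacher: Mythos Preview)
Your proposal is correct and follows essentially the same approach as the paper: the paper's proof also tests \eqref{eq:two_scale_model_fluid} with $(\phi^+,\phi^M,\phi^-) = (0,w^M,0)$ where $w^M$ vanishes on $\Gamma \cup S^+ \cup S^-$ (via the decomposition \eqref{eq:decomposition_phiM}), and then invokes uniqueness of the Stokes cell problem to obtain the representation. Your version is somewhat more explicit---in particular your discussion of the compatibility condition is a useful addition---but the underlying argument is the same.
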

\begin{proof}
The equation for $(v_0^M,p_1^M)$ follows directly from $\eqref{eq:two_scale_model_fluid}$ by choosing $(\phi^+,\phi^M,\phi^-) = (0,\phi^M,0)$ with $\phi^M = w^M$ in the representation \eqref{eq:decomposition_phiM}. By uniqueness, we obtain the desired representation.
\end{proof}

\begin{remark}\label{rem:representation_v_0^M}
Proposition \ref{prop:representation_v_0^M} is also valid for $\gamma = 3$, if we put $u_0 = (0,0,u_0^3)$ with $u_0^3$ from Proposition \ref{prop:compactness_disp_gamma3}.
\end{remark}

\subsubsection*{Derivation of the macroscopic equation $\eqref{def:Macro_Stokes_model_strong}$ for $(v_0^{\pm},p_0^{\pm})$:}
Let $\phi^{\pm} \in H^1(\Omega^{\pm},\partial_D \Omega^{\pm})^3$ with $\phi^+_3 = \phi^-_3 $ on $\Sigma$. We test equation $\eqref{eq:two_scale_model_fluid}$ with $\phi = (\phi^+,\phi^M,\phi^-)$, where for $\phi^M$ we consider a decomposition $\eqref{eq:decomposition_phiM}$ of the form
\begin{align*}
\phi^M = \sum_{\alpha \in \{\pm\}} \sum_{i=1}^3 \phi_i^{\alpha} q_i^{\alpha} .
\end{align*}
In particular, $\phi^M$ vanishes in the solid part $Z_s$.
We get 
\begin{align}
\begin{aligned}\label{eq:aux_derivation_Stokes_macro}
\sum_{\pm} & \left\{ \int_{\Omega^{\pm}} \partial_t v_0^{\pm} \cdot \phi^{\pm} dx +\int_{\Omega^{\pm}} D(v_0^{\pm}) : D(\phi^{\pm}) dx -  \int_{\Omega^{\pm}} p_0^{\pm} \nabla \cdot \phi^{\pm} dx   \right\} 
\\
&+ \int_{\Sigma} \int_{Z_f} D_y(v_0^M) : D_y(\phi^M) dy d\x  = \sum_{\pm} \int_{\Omega^{\pm}} f^{\pm}_0 \cdot \phi^{\pm} dx  . 
\end{aligned}
\end{align}
Now, we use the representation of $\phi^M$ and $v_0^M$ from Proposition \ref{prop:representation_v_0^M} to rewrite the integral including $v_0^M$. Similar calculations can be found in \cite[Section 6.1]{gahn2025effective}, so we don't give all the details.

We obtain for almost every $(t,\x) \in (0,T)\times \Sigma$
\begin{align}
\begin{aligned}\label{eq:aux_derivation_macro_fluid_Bpm}
    \int_{Z_f}& D_y(v_0^M) : D_y(\phi^M) = \sum_{\alpha \in \{\pm\}} \sum_{i,j=1}^3 \partial_t u_0^i \phi_j^{\alpha} \int_{Z_f} D_y(q_i^{\Gamma}) : D_y(q_j^{\alpha} ) dy
\\
&+ \sum_{\alpha,\beta\in\{\pm\}} \sum_{i,j=1}^3 \phi^{\beta}_j [v_0^{\alpha}]_i \int_{Z_f} D_y(q_i^{\alpha}) : D_y(q_j^{\beta})dy 
\\
&= \sum_{\alpha \in \{\pm\}} L^{\alpha}\partial_t u_0 \cdot \phi^{\alpha} + \sum_{\alpha,\beta \in \{\pm\}} B^{\alpha,\beta}v_0^{\alpha} \cdot \phi^{\beta},
\end{aligned}
\end{align}
with the effective coefficients $L^{\alpha}, B^{\alpha,\beta} \in \R^{3\times 3}$ for $\alpha , \beta \in \{\pm\}$ defined for $i,j=1,2,3$ by
\begin{align}
    \label{Def:B}
    B^{\alpha,\beta}_{ji} &:= \int_{Z_f} D_y(q_i^{\alpha}) : D_y(q_j^{\beta})dy ,
    \\
    \label{Def:L}
    L_{ji}^{\alpha} &:= \int_{Z_f}D_y(q_i^{\Gamma}) : D_y(q_j^{\alpha})dy.
\end{align}
Altogether, we obtain the following equation for $(v_0^{\pm},p_0^{\pm})$ which holds almost everywhere in $(0,T)$:
\begin{align}
\begin{aligned}\label{eq:var_macro_fluid_with_B}
\sum_{\pm} & \left\{ \int_{\Omega^{\pm}} \partial_t v_0^{\pm} \cdot \phi^{\pm} dx +\int_{\Omega^{\pm}} D(v_0^{\pm}) : D(\phi^{\pm}) dx -  \int_{\Omega^{\pm}} p_0^{\pm} \nabla \cdot \phi^{\pm} dx   \right\} 
\\
&+ \int_{\Sigma} \sum_{\pm} L^{\pm} \partial_t u_0  \cdot \phi^{\pm} + \sum_{\alpha,\beta\in \{\pm\}} B^{\alpha, \beta} v_0^{\alpha} \cdot \phi^{\beta} \, d\x  = \sum_{\pm} \int_{\Omega^{\pm}} f^{\pm}_0 \cdot \phi^{\pm} dx  
\end{aligned}
\end{align}
for all $(\phi^+,\phi^-) \in H^1(\Omega^+,\partial_D \Omega^+)^3 \times H^1(\Omega^-,\partial_D \Omega^-)^3$ with $\phi^+_3 = \phi^-_3$ on $\Sigma$. Let us decompose the term on $\Sigma$ in its tangential and normal part. For a vector field $\psi$ we define its normal part by $\psi_{\nu}:= (\psi \cdot \nu) \nu$ and its tangential part by $\psi_t := \psi - \psi_{\nu}$. Hence, we obtain ($\phi^{\pm} \cdot \nu^{\pm} = \mp \phi_3$ for $\phi_3:= \phi^+_3|_\Sigma = \phi^-_3|_\Sigma$)
\begin{align*}
\int_{\Sigma} \sum_{\pm} L^{\pm}\partial_t & u_0 \cdot \phi^{\pm}+ \sum_{\alpha,\beta\in \{\pm\}} B^{\alpha, \beta} v_0^{\alpha} \cdot \phi^{\beta} d\x 
\\
=& \int_{\Sigma} \sum_{\beta \in \{\pm\}}\left[ L^{\beta} \partial_t u_0 + \sum_{\alpha \in\{\pm\}} B^{\alpha,\beta} v_0^{\alpha} \right] \cdot \nu^- \phi_3 d\x
\\
&+ \int_{\Sigma} \sum_{\beta \in \{\pm\}} \left[ L^{\beta} \partial_t u_0  + \sum_{\alpha\in \{\pm\}} B^{\alpha,\beta} v_0^{\alpha} \right] \cdot \phi_t^{\beta} d\x.
\end{align*}
Hence, $\eqref{eq:var_macro_fluid_with_B}$ is the weak formulation of problem $\eqref{def:Macro_Stokes_model_strong}$ with the boundary conditions  $\eqref{Macro_Model_Fluid_Stress_normal}$ and $\eqref{Macro_Model_Fluid_Stress_tangential}$
\begin{align*}
-\llbracket (D (v_0) - p_0 I ) \nu\cdot \nu\rrbracket &=
 \sum_{\beta \in \{\pm\}}\left[ L^{\beta} \partial_t u_0 + \sum_{\alpha \in\{\pm\}} B^{\alpha,\beta} v_0^{\alpha} \right] \cdot \nu^- 
&\mbox{ on }& (0,T)\times \Sigma,
\\
- [(D(v_0^{\pm}) - p_0^{\pm}I)\nu^{\pm}]_t &= \left[ L^{\beta} \partial_t u_0  + \sum_{\alpha\in \{\pm\}} B^{\alpha,\beta} v_0^{\alpha} \right]_t &\mbox{ on }& (0,T)\times \Sigma,
\end{align*}
Hence it remains to rewrite the right-hand side in terms of the quantities $K^{\pm} \in \R^{3\times 3}$ and $M^{\pm} \in \R^{3\times 3} $ (this notation was also used in \cite{gahn2025effective}) defined by 
\begin{align}\label{Def:K}
    K_{ij}^{\alpha}:= \begin{cases} 
    B_{ij}^{\alpha,\alpha}  &\mbox{ for } i,j=1,2,
    \\
    2 B_{ij}^{\alpha,\alpha} &\mbox{ for } (i=1,2, \mbox{ and } j=3) \mbox{ or } (i=3,  \mbox{ and } j=1,2),
    \\
   2 B_{ii}^{\alpha,\alpha}  &\mbox{ for } i=3,
    \end{cases}
\end{align}
and 
\begin{align}\label{Def:M}
    M_{ij}^{\alpha} :=\begin{cases} B_{ij}^{\alpha , - \alpha} &\mbox{ for } i,j=1,2,
    \\
    0 &\mbox{ for } i=3 \mbox{ or } j=3.
    \end{cases}
\end{align}
By an elemental calculation we get 
\begin{align}
\begin{aligned}\label{eq:Relation_LB_LKM}
 \int_{\Sigma} \sum_{\alpha \in \{\pm\}} & L^{\alpha}\partial_t u_0  \cdot \phi^{\alpha} + \sum_{\alpha,\beta \in \{\pm\}} B^{\alpha,\beta}v_0^{\alpha} \cdot \phi^{\beta} \, d\x
 \\
 &= \int_{\Sigma}\sum_{\pm} L^{\pm} \partial_t u_0  \cdot  \phi^{\pm} + \sum_{\pm} K^{\pm} v_0^{\pm} \cdot \phi^{\pm} + M^+ v_0^+ \cdot \phi^- + M^- v_0^- \cdot \phi^+ \, d\x.
\end{aligned}
\end{align}
The right-hand side is the representation from \cite{gahn2025effective} with the additional term including $L^{\pm} \partial_t u_0$ arising from the elastic solid. 
Plugging in this identity into $\eqref{eq:var_macro_fluid_with_B}$, we obtain the weak formulation of the macro-model $\eqref{def:Macro_Stokes_model_strong}$ for the fluid flow.

It remains to show that the jump of the normal component of the normal stress vanishes across $\Sigma$, see formula $\eqref{RHS_gamma=1}$. To  show this result, we first have to derive the macroscopic limit problem for the displacement $u_0$.

\subsubsection*{Derivation of the macroscopic equation $\eqref{eq:macro_model_displacement_gamma1}$ for the displacement $u_0$:}
We start with a representation for the corrector $u_1$ from Proposition \ref{prop:compactness_disp_gamma1}, for which we use the following cell problem: Let $\chi_{ij} \in H_{\#}^1(Z_s)^3/\R^3$ for $i,j=1,2,3$ be the unique weak solution of 
\begin{align}
\begin{aligned}\label{eq:CellProblem_chi_ij}
-\nabla_y \cdot (A (D_y(\chi_{ij}) + M_{ij})) &= 0 &\mbox{ in }& Z_s,
\\
-A(D_y(\chi_{ij}) + M_{ij} ) \nu &= 0 &\mbox{ on }& \Gamma,
\\
\chi_{ij} \mbox{ is } Y\mbox{-periodic, } & \int_{Z_f} \chi_{ij} dy = 0,
\end{aligned}
\end{align}
with $M_{ij} \in \R^{3\times 3}$ defined by 
\begin{align*}
    M_{ij}:= \frac12 (e_i \otimes e_j + e_j \otimes e_i).
\end{align*}
Due to the Korn inequality and the fact that every rigid displacement which is $Y$-periodic is constant, this problem has a unique weak solution. Now, we are able to give the representation for $u_1$:
\begin{proposition}\label{prop:representation_u1}
Let $\hat{u}_0= (u_0^1,u_0^2)$ and $u_1$ be the limit functions from Proposition \ref{prop:compactness_disp_gamma1}. Then it holds for almost every $(t,\x,y) \in (0,T)\times \Sigma \times Z_s$ that
\begin{align*}
u_1(t,\x,y) = \sum_{i,j=1}^2 D_{\x}(\hat{u}_0)_{ij} (t,\x) \chi_{ij}(y),
\end{align*}
with the cell solutions $\chi_{ij}$ defined by $\eqref{eq:CellProblem_chi_ij}$
\end{proposition}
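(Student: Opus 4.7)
The strategy is to derive the weak form of the cell problem satisfied by $u_1$ by choosing suitable $\vareps$-oscillating test functions in the microscopic equation \eqref{eq:Var_Micro_veps}, and then identify the explicit representation by linearity and uniqueness of the cell problem. Concretely, I would go back to the microscopic weak formulation and test with
$$
\phi_\vareps(t,x) := \begin{cases} \vareps\, \rho(t)\,\psi(\x)\,\tilde\chi\!\left(\fxe\right) & x \in \oem,\\ 0 & x \in \oeps^\pm, \end{cases}
$$
where $\rho \in C_c^\infty(0,T)$, $\psi \in C_c^\infty(\Sigma)$, and $\tilde\chi \in C_\#^\infty(\overline{Z})^3$ is an extension to all of $Z$ of an arbitrary $\chi \in C_\#^\infty(\overline{Z_s})^3$, chosen to vanish in a neighborhood of $S^+ \cup S^-$. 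The existence of such an extension is ensured by the geometric assumption \eqref{ZscapSpm}. By construction $\phi_\vareps$ is continuous across $S_\vareps^\pm$, vanishes on $\partial_D\Omega$, hence lies in $H^1(\Omega,\partial_D\Omega)^3$, and a direct computation gives $D(\phi_\vareps) = \rho\psi\, D_y\tilde\chi\!\left(\fxe\right) + O(\vareps)$ pointwise.

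Next I would verify that all terms in \eqref{eq:Var_Micro_veps} other than the elasticity one vanish as $\vareps\to 0$. The contributions on $\oeps^\pm$ are identically zero. For the membrane terms, $\|\phi_\vareps\|_{L^2(\oem)} = O(\vareps^{3/2})$ and $\|D(\phi_\vareps)\|_{L^2(\oem)} = O(\sqrt{\vareps})$, combined with the \emph{a priori} estimates of Lemmas~\ref{lem:apriori_velocity_displacement}--\ref{lem:apriori_pressure}, give
$$
\int_{\oemf}\!\partial_t \veps^M\!\cdot\phi_\vareps = O(\vareps^{3/2}),\quad \vareps\!\int_{\oemf}\!\! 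D(\veps^M)\!:\!D(\phi_\vareps) = O(\vareps), \quad \foe\!\int_{\oemf}\! \peps^M\,\nabla\!\cdot\!\phi_\vareps = O(\sqrt{\vareps}),
$$
and $\int_{\oems}\partial_{tt}\ueps\cdot\phi_\vareps = O(\vareps^{3/2})$, all tending to zero. Only the elastic term survives. Substituting the expansion of $D(\phi_\vareps)$ and invoking the two-scale convergence $\chi_{\oems}D(\ueps)\rats \chi_{Z_s}(D_{\x}(\hat u_0)+D_y(u_1))$ from Proposition~\ref{prop:compactness_disp_gamma1}, the limit reads
$$
\int_0^T\!\int_\Sigma \rho(t)\psi(\x)\!\int_{Z_s}\! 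A(y)\bigl[D_{\x}(\hat u_0)+D_y(u_1)\bigr]\!:\!D_y\tilde\chi(y)\,dy\,d\x\,dt = 0.
$$
Arbitrariness of $\rho,\psi$ together with density of the restrictions $\{\tilde\chi|_{Z_s}\}$ in $H_\#^1(Z_s)^3$ then yields, for almost every $(t,\x)$ and every $\chi\in H_\#^1(Z_s)^3$,
$$
\int_{Z_s}\! A(y)\bigl[D_{\x}(\hat u_0)(t,\x)+D_y(u_1)(t,\x,y)\bigr]:D_y(\chi)(y)\,dy = 0.
$$

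To conclude, I decompose the symmetric matrix $D_{\x}(\hat u_0) = \sum_{i,j=1}^2 D_{\x}(\hat u_0)_{ij}\,M_{ij}$ in the basis $M_{ij}$ from \eqref{eq:CellProblem_chi_ij}, and define $\tilde u_1 := \sum_{i,j=1}^2 D_{\x}(\hat u_0)_{ij}\,\chi_{ij}$. Summing the cell problems \eqref{eq:CellProblem_chi_ij} weighted by $D_{\x}(\hat u_0)_{ij}$ shows that $\tilde u_1$ satisfies exactly the same weak cell equation as $u_1$, lies in $L^2(\Sigma,H_\#^1(Z_s)/\R)^3$, and has vanishing mean over $Z_s$. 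Since the cell problem admits a unique solution in this quotient space (by the Korn inequality on $Z_s$ together with the fact that every $Y$-periodic rigid displacement is constant), we conclude $u_1 = \tilde u_1$, which is the claimed representation. The main obstacle is the sharp asymptotic bookkeeping of the membrane-scale contributions together with the construction of the admissible extension $\tilde\chi$; the geometric hypothesis $\overline{Z_s}\cap S^\pm = \emptyset$ is precisely what allows $\tilde\chi$ to be modified near $S^\pm$ without altering its action on $Z_s$.
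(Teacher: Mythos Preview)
Your proof is correct and follows essentially the same strategy as the paper: test \eqref{eq:Var_Micro_veps} with $\vareps\,\phi\!\btxfxe$ vanishing near $S^\pm$ (hence extendable by zero to the bulks), show that only the elastic term survives in the limit, identify the resulting cell problem, and conclude by uniqueness. Your treatment is in fact more explicit than the paper's --- you give precise orders for each fluid contribution and for the solid inertia, you spell out the construction of the extension $\tilde\chi$ via the geometric assumption $\overline{Z_s}\cap S^\pm=\emptyset$, and you handle the pressure term by the direct bound $\tfrac{1}{\vareps}\|\peps^M\|_{L^2}\|\nabla\!\cdot\phi_\vareps\|_{L^2}=O(\sqrt{\vareps})$, whereas the paper simply invokes $\peps^M\rats 0$.
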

\begin{proof}
As a test-function in the weak microscopic equation $\eqref{eq:Var_Micro_veps}$ we choose for $(t,x)  \in (0,T) \times  \oemf$ the function $\phieps(t,x):= \vareps \phi\btxfxe$ with $\phi \in C_0^{\infty}((0,T)\times \Sigma, C_{\#}^{\infty}(\overline{Z}))^3$ such that $\phi=0$ on $S^{\pm}$ and extend this function by zero to the bulk domains $\oeps^{\pm}$. From the \textit{a priori} estimates in Lemma \ref{lem:apriori_velocity_displacement} we obtain that all terms including the fluid velocity $\veps^M$ are of order $\vareps$ (the bulk terms vanish, due to the choice of $\phi$). Since $\peps^M \rats 0$ (see Proposition \ref{prop:compactness_fluid_layer}), we get for $\vareps \to 0$
\begin{align*}
\int_0^T \int_{\Sigma} \int_{Z_s} A (D_{\x}(\hat{u}_0) + D_y(u_1)) : D_y(\phi) dy d\x dt = 0.
\end{align*}
By density this equation is valid for all $\phi \in L^2((0,T)\times \Sigma, H_{\#}^1(Z_s))^3$. Since this problem has a unique solution, we obtain the desired result.
\end{proof}

Next, we derive the macroscopic equation for $u_0$. For this, we choose in $\eqref{eq:two_scale_model_fluid}$ test-functions of the form $\phi = (0,\phi^M,0)$ with (see the decomposition in $\eqref{eq:decomposition_phiM}$ with $\phi^{\pm} = 0$ and $w^M= 0$)
\begin{align*}
    \phi^M(\x,y) = \sum_{i=1}^3 \psi_i(\x) q_i^{\Gamma}(y)
\end{align*}
with $\psi \in H_0^1(\Sigma)^3$. Using the representation of $v_0^M$ from Proposition \ref{prop:representation_v_0^M}, we get  (using $D_y(\phi^M) = 0$ in $\Sigma \times Z_s$ and $D_{\x}(\phi^M) = D_{\x}(\psi)$ in $\Sigma \times Z_s$)
\begin{align}
\begin{aligned}\label{eq:aux_derivation_macro_displacement}
0 =& \int_{\Sigma} \int_{Z_f} D_y(v_0^M) : D_y(\phi^M) dy d\x + \int_{\Sigma} \int_{Z_s} A [D_{\x}(\hat{u}_0) + D_y(u_1)] : D_{\x} (\psi) dy d\x
\\
=& \sum_{i,j=1}^3 \int_{\Sigma} \partial_t u_0^i \psi_j \int_{Z_f} D_y(q_i^{\Gamma}) : D_y(q_j^{\Gamma} ) dy d\x + \sum_{\alpha \in \{\pm\}} \sum_{i,j=1}^3 \int_{\Sigma} [v_0^{\alpha}]_i \psi_j \int_{Z_f} D_y(q_i^{\alpha}): D_y(q_j^{\Gamma}) dy d\x
\\
&+ \int_{\Sigma} \int_{Z_s} A [D_{\x}(\hat{u}_0) + D_y(u_1)] : D_{\x} (\psi) dy d\x.
\end{aligned}
\end{align}
To rewrite the integral over $Z_s$ we introduce the effective elasticity tensor $A^{\ast} \in \R^{2\times 2 \times 2 \times 2}$ defined by ($i,j,l,k = 1,2$) 
\begin{align}
\begin{aligned}\label{def:effective_elasticity_tensor}
A^{\ast}_{ijkl}:&= \int_{Z_s} [A(M_{kl} + D_y(\chi_{kl}))]_{ij} dy = \int_{Z_s} [A(M_{kl} + D_y(\chi_{kl}))]:M_{ij}dy 
\\
&=\int_{Z_s} [A(M_{kl} + D_y(\chi_{kl}))]:[M_{ij} + D_y(\chi_{ij})]dy 
\end{aligned}
\end{align}
with the cell solutions $\chi_{kl}$ defined via $\eqref{eq:CellProblem_chi_ij}$. This definition is also valid for all $i,j,k,l=1,2,3$. However, if one index is $3$, then we have $A^{\ast}_{ijkl} = 0$, what follows from $-M_{3l} = D_y(-y_3 e_l)$ for $l=1,2,3$. Using the representation of $u_1$ from Proposition \ref{prop:representation_u1}, we obtain after an elemental calculation with $\hat{\psi}:= (\psi_1,\psi_2)$ (here we use that $A^{\ast}_{ijkl}=0$ if one index is $3$)
\begin{align*}
\int_{Z_s} A [D_{\x}(\hat{u}_0) + D_y(u_1)] : D_{\x} (\psi) dy  = A^{\ast}D_{\x}(\hat{u}_0):D_{\x}(\hat{\psi})
\end{align*}
almost everywhere in $\Sigma$. Now, we define the effective tensor $L^{\Gamma} \in \R^{3\times 3}$ for $i,j=1,2,3$ by
\begin{align}\label{Def:LGamma}
L^{\Gamma}_{ij}:= \int_{Z_s} D_y(q_i^{\Gamma}):D_y(q_j^{\Gamma})dy.
\end{align}
Obviously $L^{\Gamma} $ is symmetric and it is easy to check that it is also positive definite.  Altogether, we obtain from $\eqref{eq:aux_derivation_macro_displacement}$ that
\begin{align*}
    0 = \int_{\Sigma} L^{\Gamma} \partial_t u_0 \cdot \psi d\x + \sum_{\pm} \int_{\Sigma} L^{\pm} \psi \cdot v_0^{\pm}d\x + \int_{\Sigma} A^{\ast} D_{\x}(\hat{u}_0) : D_{\x}(\hat{\psi}) d\x
\end{align*}
for all $\psi \in H^1_0(\Sigma)^3$. By density this equation is also valid for all $\psi \in H_0^1(\Sigma)^2 \times L^2(\Sigma)$, since it includes no derivatives for $\psi_3$. This is the weak formulation $\eqref{eq:var_macro_model_displ_gamma1}$ of the macroscopic equation $\eqref{eq:macro_model_displacement_gamma1}.$ The initial condition $u_0(0) = 0$ follows directly from the compactness results in Proposition \ref{prop:compactness_disp_gamma1} and $\ueps(0) = 0$.  

\begin{remark}
We emphasize that the stress term in the equation above is only depending on the first and second component of the test-function $\psi$. 
\end{remark}

\begin{corollary}\label{cor:LGamma_u_0_Lpm_v_0}
Almost everywhere on $(0,T)\times \Sigma$ it holds that
\begin{align*}
    L^{\Gamma} \partial_t u_0 \cdot e_3 = - \sum_{\pm} L^{\pm}e_3 \cdot v_0^{\pm} .
\end{align*}
\end{corollary}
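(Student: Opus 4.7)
The plan is to extract the identity directly from the weak formulation \eqref{eq:var_macro_model_displ_gamma1} of the macroscopic displacement equation by a clever choice of test function that isolates the third component. The key structural fact I will exploit is the one noted in Remark \ref{rem:macro_model_displ_gamma1} and in the discussion after \eqref{def:effective_elasticity_tensor}: the effective elasticity tensor $A^{\ast}$ has the property that $A^{\ast}_{ijkl}=0$ whenever any of the indices equals $3$, a consequence of $-M_{3l} = D_y(-y_3 e_l)$. This means the second-order elliptic term in \eqref{eq:var_macro_model_displ_gamma1} depends only on the in-plane components $\hat{\psi}=(\psi_1,\psi_2)$ and is completely blind to $\psi_3$.

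First, I would pick a test function of the form $\psi = \psi_3\, e_3$ with $\psi_3 \in L^2(\Sigma)$ arbitrary; this is admissible in the space $H_0^1(\Sigma)^2 \times L^2(\Sigma)$ since no trace or gradient is required on the third component. Then $\hat{\psi} = 0$ and hence $D_{\x}(\hat{\psi}) = 0$, so the elasticity contribution $\int_{\Sigma} A^{\ast} D_{\x}(\hat{u}_0) : D_{\x}(\hat{\psi}) d\x$ vanishes identically. What remains of \eqref{eq:var_macro_model_displ_gamma1} is
\begin{equation*}
\int_{\Sigma} \psi_3 \left( L^{\Gamma} \partial_t u_0 \cdot e_3 + \sum_{\pm} L^{\pm} e_3 \cdot v_0^{\pm} \right) d\x \;=\; 0,
\end{equation*}
where I used $(L^{\Gamma}\partial_t u_0)\cdot(\psi_3 e_3)=\psi_3\,L^{\Gamma}\partial_t u_0\cdot e_3$ and $L^{\pm}(\psi_3 e_3)\cdot v_0^{\pm}=\psi_3\, L^{\pm}e_3\cdot v_0^{\pm}$.

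Finally, since $\psi_3$ ranges over all of $L^2(\Sigma)$ and the identity holds for a.e.\ $t\in(0,T)$, the fundamental lemma of the calculus of variations yields the pointwise equality $L^{\Gamma}\partial_t u_0 \cdot e_3 = -\sum_{\pm} L^{\pm} e_3 \cdot v_0^{\pm}$ almost everywhere on $(0,T)\times \Sigma$. There is essentially no obstacle here; the proof is a direct corollary of the variational equation once the structural property $A^{\ast}_{ijkl}=0$ for any index equal to $3$ is invoked. The only point worth double-checking is that test functions with vanishing in-plane part are indeed admissible, which is guaranteed by the weak form having only $L^2$-regularity for the third component as remarked after \eqref{eq:var_macro_model_displ_gamma1}.
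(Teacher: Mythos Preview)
Your proof is correct and follows essentially the same approach as the paper: test \eqref{eq:var_macro_model_displ_gamma1} with a function of the form $\psi=\psi_3 e_3$ so that $\hat\psi=0$ and the elasticity term drops out, then conclude by the arbitrariness of $\psi_3$. The only cosmetic difference is that the paper takes $\psi_3\in C_0^\infty(\Sigma)$ while you take $\psi_3\in L^2(\Sigma)$ directly; both are admissible and lead to the same pointwise conclusion.
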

\begin{proof}
We test the macroscopic equation $\eqref{eq:var_macro_model_displ_gamma1}$ with  $\psi = e_3 \phi$ with $\phi \in C_0^{\infty}(\Sigma)$. 
\end{proof}

It remains to show $\eqref{RHS_gamma=1}$. For this, we use the following Lemma giving a relation between the effective coefficients $L^{\Gamma}$, $L^{\pm}$, and $B^{\alpha, \beta}$. 

\begin{lemma}\label{lem:relation_effective_coefficients_L}
It holds for $\alpha \in \{\pm\}$ 
\begin{align*}
    L^+ + L^- +  L^{\Gamma} &= 0,
    \\
     L^{\alpha} + \sum_{\beta \in \{\pm\}} \left[B^{\alpha ,\beta}\right]^T &= 0,
    \\
    (K^{\pm} + L^{\pm})e_3 &= 0.
\end{align*}
\end{lemma}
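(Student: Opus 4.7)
All three identities rest on a single algebraic fact from Remark \ref{rem:linear_comb_q_i}: the sum $q_i^{\Gamma} + q_i^+ + q_i^-$ equals the constant vector $e_i$ for $i=1,2,3$, so its symmetric gradient vanishes pointwise in $Z_f$. The plan is therefore to unfold each matrix identity entry-wise, exploit the symmetry of the Frobenius inner product to group the three matrices into one integrand of the form $D_y(q_j^{\bullet}):D_y(q_i^{\Gamma}+q_i^++q_i^-)$, and conclude.

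For the first identity I would read off $(L^+ + L^- + L^{\Gamma})_{ji}$ from \eqref{Def:L} and \eqref{Def:LGamma} (with the integration domain in \eqref{Def:LGamma} being $Z_f$, consistent with the derivation around \eqref{eq:aux_derivation_macro_displacement}), factor out $D_y(q_i^{\Gamma})$ on the right of the colon, and apply Remark \ref{rem:linear_comb_q_i}. The second identity is analogous: using symmetry of $A:B$ and the definition \eqref{Def:B} of $B^{\alpha,\beta}$, the $(j,i)$ entry of $L^{\alpha} + \sum_{\beta}[B^{\alpha,\beta}]^T$ becomes
\begin{align*}
\int_{Z_f} D_y(q_j^{\alpha}) : D_y\Bigl(q_i^{\Gamma} + \sum_{\beta\in\{\pm\}} q_i^{\beta}\Bigr)\, dy,
\end{align*}
which vanishes by the same argument.

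The third identity requires a brief additional step because of the factor-of-two scaling in the definition \eqref{Def:K} of $K^{\pm}$ together with the normalization $q_3^{\pm} = \tfrac{1}{2} q_3$. A short computation converts $(K^{\pm}e_3)_j = 2B^{\pm,\pm}_{j3}$ into $\int_{Z_f} D_y(q_3):D_y(q_j^{\pm})\, dy$ for every $j=1,2,3$, while $(L^{\pm}e_3)_j = \int_{Z_f} D_y(q_3^{\Gamma}) : D_y(q_j^{\pm})\, dy$. Adding the two and observing that $q_3^+ + q_3^- = q_3$ (so that Remark \ref{rem:linear_comb_q_i} yields $q_3^{\Gamma} + q_3 = e_3$), gives the desired vanishing. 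The main obstacle throughout is purely bookkeeping, namely keeping the row/column indexing and the different normalization constants (the factor $2$ in the definition of $K^{\pm}$, the halving $q_3^{\pm} = \tfrac{1}{2} q_3$) consistent; there is no genuine analytic difficulty, only careful index tracking.
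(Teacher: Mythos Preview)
Your proposal is correct and follows essentially the same approach as the paper's proof: both reduce each identity to the pointwise vanishing of $D_y(q_i^{\Gamma}+q_i^+ + q_i^-)=D_y(e_i)$ from Remark \ref{rem:linear_comb_q_i}, with the paper phrasing the first identity as $L^{\Gamma}_{ij} = -(L^+_{ji}+L^-_{ji})$ plus symmetry of $L^{\Gamma}$ and skipping the details of the third identity entirely. Your careful bookkeeping for the third identity (the factor $2$ in $K^{\pm}$ against the halving $q_3^{\pm}=\tfrac12 q_3$, yielding $q_3^{\Gamma}+q_3=e_3$) is exactly what the paper means by ``similar arguments'', and your observation that the integration domain in \eqref{Def:LGamma} should read $Z_f$ is correct.
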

\begin{proof}
This result is an easy consequence of the relation $e_i = q_i^{\Gamma} + q_i^+ + q_i^-$ from Remark \ref{rem:linear_comb_q_i}  for $i=1,2,3$. In fact, we have for $i,j=1,2,3$ 
\begin{align*}
L^{\Gamma}_{ij} = \int_{Z_f} D_y(q_i^{\Gamma}) : D_y(q_j^{\Gamma})dy =  - \int_{Z_f} D_y(q_i^{\Gamma}) : D_y( q_j^+ + q_j^-) dy = - (L_{ji}^+ + L_{ji}^-).
\end{align*}
Using the symmetry of $L^{\Gamma}$ we get $L^+ + L^- +  L^{\Gamma} = 0$.
Similar
\begin{align*}
B_{ij}^{\alpha ,\alpha} = \int_{Z_f} D_y(q_i^{\alpha}) : D_y(q_j^{\alpha}) dy = \int_{Z_f} D_y(q_i^{\alpha}) : D_y(- q_j^{\Gamma} - q_j^{-\alpha}) dy = - L_{ij}^{\alpha} - B_{ji}^{\alpha, - \alpha}.
\end{align*}
The identity for $K^{\pm}$ and $L^{\pm}$ follows by similar arguments and we skip it.
\end{proof}
Hence, we obtain with Lemma \ref{lem:relation_effective_coefficients_L}  and Corollary \ref{cor:LGamma_u_0_Lpm_v_0} together with $\eqref{eq:Relation_LB_LKM}$ almost everywhere on $(0,T)\times \Sigma$ 
\begin{align*}
L^+ \partial_t u_0 \cdot \nu^+ - L^- \partial_t &u_0 \cdot \nu^- +  K^+v_0^+ \cdot \nu^+ -K^-v_0^- \cdot \nu^- 
\\
&= \sum_{\alpha\in \{\pm\}} L^{\alpha} \partial_t u_0 \cdot e_3 + \sum_{\alpha,\beta \in  \{\pm\}} B^{\alpha,\beta} v_0^{\alpha} \cdot e_3
\\
&= - L^{\Gamma} \partial_t u_0 \cdot e_3  + \sum_{\alpha,\beta \in  \{\pm\}} B^{\alpha,\beta} v_0^{\alpha} \cdot e_3
\\
&= \sum_{\alpha \in \{\pm\}} \left[ \left\{L^{\alpha} + \sum_{\beta \in\{\pm\}} \left[B^{\alpha,\beta}\right]^T  \right\} e_3 \right] \cdot v_0^{\alpha} = 0.
\end{align*}

Finally, we prove the statement from Remark \ref{rem:macro_model_displ_gamma1}, (ii) regarding the jump of the normal stress of the fluid across $\Sigma$, namely: It holds in the weak sense that 
\begin{align}\label{eq:interface_condition_normal_stress}
     -\nabla_{\x} \cdot (A^{\ast} D_{\x}(\hat{u}_0)) = -\llbracket (-D(v_0) + p_0 I)\nu \rrbracket \qquad\mbox{on } (0,T)\times \Sigma.
\end{align}
In fact, choosing $\psi := \theta \in H^1(\Omega,\partial_D \Omega)^3$ with $\theta|_{\Sigma} \in H^1_0(\Sigma)^3$ in $\eqref{eq:var_macro_model_displ_gamma1}$ we get with Lemma \ref{lem:relation_effective_coefficients_L}
\begin{align*}
 \int_{\Sigma} A^{\ast} D_{\x}(\hat{u}_0) : D_{\x}(\hat{\theta}) d\x &= - \int_{\Sigma} L^{\Gamma} \partial_t u_0 \cdot \theta + \sum_{\pm} L^{\pm} \theta\cdot v_0^{\pm} d\x 
 \\
 &= \int_{\Sigma} \sum_{\alpha \in \{\pm\}} L^{\alpha } \partial_t u_0 \cdot \theta + \sum_{\alpha,\beta \in\{\pm\}} [B^{\alpha,\beta}]^T \theta \cdot v_0^{\alpha} d\x
 \\
 &= \int_{\Sigma} \sum_{\alpha \in \{\pm\}} L^{\alpha } \partial_t u_0 \cdot \theta + \sum_{\alpha,\beta \in\{\pm\}} B^{\alpha,\beta} v_0^{\alpha} \cdot \theta d\x.
\end{align*}
Now, choosing $\phi^{\pm} = \theta $ in $\Omega^{\pm}$ in the variational equation $\eqref{eq:var_macro_fluid_with_B}$ for $(v_0^{\pm},p_0^{\pm})$, we get $\eqref{eq:interface_condition_normal_stress}$. Since the third component of the tensor $A^{\ast} D_{\x}(\hat{u}_0)$ is zero,  the result in \eqref{eq:interface_condition_normal_stress} is in accordance with the condition $\eqref{RHS_gamma=1}$ for the  jump of the normal component of the normal stress for the fluid across $\Sigma$.

To obtain uniqueness for the macroscopic problem we use the following (semi)-positivity result.
\begin{lemma}\label{lem:Boundary_positive_semidefinit}
For every $\xi^{\Gamma},\xi^{\pm} \in \R^n$ it holds that
\begin{align*}
  L^{\Gamma} \xi^{\Gamma} \cdot \xi^{\Gamma} + 2  \sum_{\alpha \in \{\pm\}} L^{\alpha} \xi^{\Gamma} \cdot \xi^{\alpha} + \sum_{\alpha,\beta \in \{\pm\}} B^{\alpha,\beta} \xi^{\alpha}\cdot \xi^{\beta} \geq 0.
\end{align*}
\end{lemma}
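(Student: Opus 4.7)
My plan is to realize the quadratic form on the left-hand side as $\int_{Z_f} |D_y(w)|^2\, dy$ for a judiciously chosen vector field $w$ built as a linear combination of the cell solutions $q_i^{\Gamma}$ and $q_i^{\pm}$. The coefficients $L^{\Gamma}$, $L^{\pm}$, $B^{\alpha,\beta}$ are all defined as $L^2(Z_f)$-inner products of symmetric gradients of these cell solutions (recall that the $Z_s$ in \eqref{Def:LGamma} should read $Z_f$, as is clear from the proof of Lemma \ref{lem:relation_effective_coefficients_L}), so the expression in the lemma is precisely the expansion of the squared $L^2$-norm of a linear combination and is therefore non-negative by inspection.

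Concretely, given $\xi^{\Gamma}, \xi^{+}, \xi^{-} \in \R^3$, I would set
\begin{equation*}
w := \sum_{i=1}^3 \xi^{\Gamma}_i\, q_i^{\Gamma} + \sum_{\alpha \in \{\pm\}} \sum_{i=1}^3 \xi^{\alpha}_i\, q_i^{\alpha} \qquad \text{in } Z_f,
\end{equation*}
so that $w \in H^1_{\#}(Z_f)^3$. Then by linearity
\begin{equation*}
D_y(w) = \sum_i \xi^{\Gamma}_i D_y(q_i^{\Gamma}) + \sum_{\alpha,i} \xi^{\alpha}_i D_y(q_i^{\alpha}),
\end{equation*}
and expanding $\int_{Z_f} D_y(w):D_y(w)\, dy$ yields exactly three groups of terms: the $\Gamma$-$\Gamma$ diagonal block, which by the (corrected) definition \eqref{Def:LGamma} and the symmetry of $L^{\Gamma}$ equals $L^{\Gamma} \xi^{\Gamma}\cdot \xi^{\Gamma}$; the mixed $\Gamma$-$\alpha$ cross terms, which by \eqref{Def:L} collapse to $2 \sum_{\alpha} L^{\alpha} \xi^{\Gamma}\cdot \xi^{\alpha}$ after pairing the symmetric contributions; and the $\alpha$-$\beta$ block, which by \eqref{Def:B} equals $\sum_{\alpha,\beta} B^{\alpha,\beta} \xi^{\alpha}\cdot \xi^{\beta}$.

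The only point requiring a moment's care is the bookkeeping between the indexing conventions (e.g.\ $L^{\alpha}_{ji} = \int D_y(q_i^{\Gamma}):D_y(q_j^{\alpha})$ with transposed indices), but this is a routine check using symmetry of the scalar product $D_y(q_i):D_y(q_j) = D_y(q_j):D_y(q_i)$. I do not anticipate any real obstacle: the lemma is a repackaging of the Cauchy–Schwarz-type inequality $\int_{Z_f}|D_y(w)|^2\, dy \geq 0$, and the representation formula for $v_0^M$ in Proposition \ref{prop:representation_v_0^M} is precisely the motivation for considering this specific combination.
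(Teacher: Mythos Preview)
Your proposal is correct and is essentially identical to the paper's proof: the paper defines the same linear combination (calling it $\xi^M$ instead of $w$) and observes that $\|D_y(\xi^M)\|_{L^2(Z_f)}^2$ expands to exactly the quadratic form in the lemma. Your remark about the typo in \eqref{Def:LGamma} (the integral should be over $Z_f$, not $Z_s$) is also correct.
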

\begin{proof}
We define 
\begin{align*}
\xi^M:= \sum_{i=1}^3 \xi^{\Gamma}_i q_i^{\Gamma} + \sum_{\alpha \in\{\pm\}} \sum_{i=1}^3 \xi^{\alpha}_i q_i^{\alpha}.
\end{align*}
A similar calculation as in $\eqref{eq:aux_derivation_macro_fluid_Bpm}$ shows
\begin{align*}
0 \le \|D_y (\xi^M)\|^2_{L^2(Z_f)} =  L^{\Gamma} \xi^{\Gamma} \cdot \xi^{\Gamma} + 2  \sum_{\alpha \in \{\pm\}} L^{\alpha} \xi^{\Gamma} \cdot \xi^{\alpha} + \sum_{\alpha,\beta \in \{\pm\}} B^{\alpha,\beta} \xi^{\alpha}\cdot \xi^{\beta},
\end{align*}
which gives the desired result.
\end{proof}
\begin{remark}
In Lemma \ref{lem:Boundary_positive_semidefinit} we cannot expect to obtain positivity (compare also \cite[Lemma 5]{gahn2025effective}). In fact, choosing $\xi^{\Gamma} = \xi^+ = \xi^-$ we obtain that $\xi^M$ in the proof above is constant and therefore
\begin{align*}
    L^{\Gamma} \xi^{\Gamma} \cdot \xi^{\Gamma} + 2  \sum_{\alpha \in \{\pm\}} L^{\alpha} \xi^{\Gamma} \cdot \xi^{\alpha} + \sum_{\alpha,\beta \in \{\pm\}} B^{\alpha,\beta} \xi^{\alpha}\cdot \xi^{\beta} = 0.
\end{align*}
\end{remark}

\begin{corollary}
The solution  $(v_0,p_0,u_0)$ of the macroscopic problem  $\eqref{def:Macro_Stokes_model_strong}  \, + \, \eqref{eq:macro_model_displacement_gamma1}$ is unique.
\end{corollary}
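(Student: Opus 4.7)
By linearity, it suffices to show that if $(v_0, p_0, u_0)$ satisfies the system \eqref{def:Macro_Stokes_model_strong} + \eqref{eq:macro_model_displacement_gamma1} with $f_0^{\pm} = 0$ and vanishing initial data, then the solution is identically zero. The plan is to derive an energy identity by testing each weak equation with a suitably matched test function and combining the results so that the indefinite cross-coupling terms on $\Sigma$ are absorbed into the positive semidefinite quadratic form of Lemma~\ref{lem:Boundary_positive_semidefinit}.

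Concretely, I would test the fluid weak formulation \eqref{eq:var_macro_fluid_with_B} with $\phi^{\pm} = v_0^{\pm}$ (which is admissible since $[v_0^+]_3 = [v_0^-]_3$ on $\Sigma$ and $v_0^{\pm} \in L^2((0,T), H^1(\Omega^{\pm}, \partial_D \Omega^{\pm}))^3$), and the displacement equation \eqref{eq:var_macro_model_displ_gamma1} with $\psi = \partial_t u_0 \in L^2((0,T), H_0^1(\Sigma)^2 \times L^2(\Sigma))$. This yields
\begin{align*}
\sum_{\pm}\tfrac12 \tfrac{d}{dt}\|v_0^{\pm}\|_{L^2(\Omega^{\pm})}^2 + \sum_{\pm}\|D(v_0^{\pm})\|_{L^2(\Omega^{\pm})}^2 + \int_{\Sigma} \sum_{\pm} L^{\pm} \partial_t u_0 \cdot v_0^{\pm} + \sum_{\alpha,\beta} B^{\alpha,\beta} v_0^{\alpha}\cdot v_0^{\beta}\, d\x &= 0,\\
\int_{\Sigma} L^{\Gamma} \partial_t u_0 \cdot \partial_t u_0\, d\x + \tfrac12 \tfrac{d}{dt} \int_{\Sigma} A^{\ast} D_{\x}(\hat{u}_0) : D_{\x}(\hat{u}_0)\, d\x + \int_{\Sigma} \sum_{\pm} L^{\pm} \partial_t u_0 \cdot v_0^{\pm}\, d\x &= 0.
\end{align*}
Summing these two identities, the coupling contribution on $\Sigma$ becomes exactly
\[
\int_{\Sigma}\Big( L^{\Gamma}\partial_t u_0 \cdot \partial_t u_0 + 2\sum_{\pm} L^{\pm}\partial_t u_0 \cdot v_0^{\pm} + \sum_{\alpha,\beta} B^{\alpha,\beta} v_0^{\alpha}\cdot v_0^{\beta}\Big)\, d\x,
\]
which is non-negative by Lemma~\ref{lem:Boundary_positive_semidefinit} (applied pointwise in $\x$ with $\xi^{\Gamma} = \partial_t u_0(\x)$ and $\xi^{\pm} = v_0^{\pm}(\x)$).

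Integrating in time from $0$ to $t$ and using the zero initial conditions $v_0^{\pm}(0) = 0$ and $u_0(0) = 0$, all remaining terms are non-negative, which forces
\[
v_0^{\pm}(t) = 0 \quad \text{in } \Omega^{\pm}, \qquad D_{\x}(\hat{u}_0(t)) = 0 \quad \text{on } \Sigma,
\]
for almost every $t \in (0,T)$ (here I use Korn's inequality together with the coercivity of $A^{\ast}$ and the homogeneous Dirichlet condition $\hat{u}_0 = 0$ on $\partial\Sigma$ to conclude $\hat{u}_0 = 0$). For the remaining component $u_0^3$, I would invoke Corollary~\ref{cor:LGamma_u_0_Lpm_v_0}: since $v_0^{\pm}|_{\Sigma} = 0$, it yields $L^{\Gamma}_{33}\partial_t u_0^3 = 0$ a.e.\ on $(0,T)\times \Sigma$, and the positive definiteness of $L^{\Gamma}$ forces $L^{\Gamma}_{33} > 0$; combined with $u_0^3(0) = 0$, this gives $u_0^3 \equiv 0$. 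Finally, once the velocity vanishes, the pressures $p_0^{\pm}$ are recovered as Lagrange multipliers from the divergence constraint: choosing, for each $\pm$ separately, test functions $\phi^{\pm}\in H^1(\Omega^{\pm},\partial_D\Omega^{\pm})^3$ with $\phi^{\pm}_3 = 0$ on $\Sigma$ (and the complementary side set to zero) and using the surjectivity of the divergence onto its natural range (Bogovskii-type argument in $\Omega^{\pm}$), one concludes $p_0^{\pm} = 0$.

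The main obstacle is the coupling on $\Sigma$: neither the fluid energy identity nor the displacement energy identity is dissipative on its own, because the cross-coupling term $\int_{\Sigma} \sum_{\pm} L^{\pm}\partial_t u_0 \cdot v_0^{\pm}$ is of indefinite sign. The essential trick is to sum the two identities (rather than subtract) so that this cross term appears with a factor $2$ and closes up, together with $L^{\Gamma}\partial_t u_0 \cdot \partial_t u_0$ and $\sum B^{\alpha,\beta} v_0^{\alpha}\cdot v_0^{\beta}$, into the positive semidefinite quadratic form of Lemma~\ref{lem:Boundary_positive_semidefinit}. A secondary technical point is justifying that $\partial_t u_0$ is an admissible test function in \eqref{eq:var_macro_model_displ_gamma1}, which follows from the regularity $u_0 \in H^1((0,T), H_0^1(\Sigma)^2 \times L^2(\Sigma))$ and a density argument.
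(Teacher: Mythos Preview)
Your proposal is correct and follows essentially the same strategy as the paper: test the fluid equation with $v_0^{\pm}$ and the displacement equation with $\partial_t u_0$, add the two identities, and invoke Lemma~\ref{lem:Boundary_positive_semidefinit} to handle the combined interface term. The paper only sketches this and leaves the details as ``standard arguments''; you have filled them in carefully, including the recovery of $u_0^3=0$ via $L^{\Gamma}_{33}>0$ and of $p_0^{\pm}=0$ via a Bogovskii argument.
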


\begin{proof}
This follows by standard arguments using Lemma \ref{lem:Boundary_positive_semidefinit}. Therefore, we just sketch the main idea. Let $(v^i_0,p^i_0,u^i_0), i=1,2$ be two solutions of the macroscopic problem $\eqref{def:Macro_Stokes_model_strong}  \, + \, \eqref{eq:var_macro_fluid_with_B}$ (where \eqref{eq:var_macro_fluid_with_B} is an equivalent formulation of \eqref{eq:macro_model_displacement_gamma1}).  We test the equations for the differences
$(\delta v_0, \delta p_0) = (v^1_0 - v^2_0,p_0^1 -p_0^2)$, respectively  $\delta u_0 = u_0^1 - u_0^2$ by $\delta v_0$ respectively $\partial_t(\delta u_0)$, add the obtained equations and use Lemma \ref{lem:Boundary_positive_semidefinit} to estimate the terms on $\Sigma$.

\end{proof}

\section{Derivation of the limit problem for $\gamma = 3$}
\label{sec:limit_problem_gamma3}

In this section, we deal with the case $\gamma = 3$. The compactness results for $\veps = (\veps^+,\veps^M,\veps^-)$ and $\peps = (\peps^+,\peps^M,\peps^-)$ remain valid. For the displacement $\ueps$ we obtain a Kirchhoff-Love displacement in the limit, see Proposition \ref{prop:compactness_disp_gamma3}, leading to a plate equation. The calculations to obtain the macroscopic fluid model for $(v_0^{\pm},p_0^{\pm})$ are very similar to the case $\gamma = 1$. The only difference is that we have to consider test-functions vanishing in $\oems$ to avoid a blow-up of the elastic stress term. Hence, following the arguments in Section \ref{sec:limit_problem_gamma1}, we obtain
\begin{align*}
\sum_{\pm} & \left\{ \int_{\Omega^{\pm}} \partial_t v_0^{\pm} \cdot \phi^{\pm} dx +\int_{\Omega^{\pm}} D(v_0^{\pm}) : D(\phi^{\pm}) dx -  \int_{\Omega^{\pm}} p_0^{\pm} \nabla \cdot \phi^{\pm} dx   \right\} 
\\
&+ \int_{\Sigma} \int_{Z_f} D_y(v_0^M) : D_y(\phi^M) dy d\x - \int_{\Sigma} \int_{Z_f} p_1^M \nabla_y \cdot \phi^M dy d\x 
 = \sum_{\pm} \int_{\Omega^{\pm}} f^{\pm}_0 \cdot \phi^{\pm} dx ,
\end{align*}
now (compared to the case $\gamma = 1$) valid for all test-functions from the space 
\begin{align*}
    \spaceH := \left\{ \phi \in \spaceV \, : \, \phi^M|_{Z_s} = 0 \right\}.
\end{align*}
We emphasize that this space was used in \cite{gahn2025effective} for the treatment of a rigid solid (with periodic boundary conditions on the lateral boundary $\partial_D \Omega$).  In particular, $(v_0^M,p_1^M)$ solve the cell problem from Proposition \ref{prop:representation_v_0^M} with $u_0 = u_0^3 e_3$, and we have 
\begin{align*}
v_0^M(t,\x,y) &=  \partial_t u_0^3 (t,\x) q_3^{\Gamma}(y) + \sum_{\alpha \in \{\pm\}} \sum_{i=1}^3  [v_0^{\alpha}]_i(t,\x,0) q_i^{\alpha}(y),
\\
p_1^M(t,\x,y) &=  \partial_t u_0^3 (t,\x) \pi_3^{\Gamma}(y) + \sum_{\alpha \in \{\pm\}} \sum_{i=1}^3  [v_0^{\alpha}]_i(t,\x,0) \pi_i^{\alpha}(y) .
\end{align*}
Further, $\eqref{eq:aux_derivation_Stokes_macro}$ and $\eqref{eq:aux_derivation_macro_fluid_Bpm}$ are still valid (with $u_0^1 = u_0^2 = 0$), and we obtain (by the same arguments as in Section \ref{sec:limit_problem_gamma1}) the interface conditions $\eqref{Macro_Model_Fluid_Stress_normal_gamma3}$ and $\eqref{Macro_Model_Fluid_Stress_tangential_gamma3}$.  We emphasize that to obtain the jump condition \eqref{Macro_Model_Fluid_Stress_normal_gamma3} the first term on the right-hand side in $\eqref{eq:Relation_LB_LKM}$ can be written by using $u_0 = u_0^3 e_3$ and Lemma \ref{lem:relation_effective_coefficients_L} (with $\phi_3 = \phi_3^+|_\Sigma = \phi_3^-|_\Sigma $)
\begin{align*}
    \sum_{\pm} L^{\pm} \partial_t u_0 \cdot \phi^{\pm} = -L_{33}^{\Gamma} \phi_3 \partial_t u_0^3 + \sum_{i=1}^2 \sum_{\pm} L^{\pm} \partial_t u_0 \cdot e_i \phi^{\pm}_i.
\end{align*}
\\

\subsubsection*{Derivation of the macroscopic equation $\eqref{eq:macro_model_displacement_gamma3}$ for the displacement $u_0^3 $ and $\hat{u}_1$:}
It remains to derive the limit equation for the displacement. Here, the idea is the same as in \cite{gahn2022derivation}, so we only give the main ideas and skip the details. 
For a representation of $u_2$ from Proposition \ref{prop:compactness_disp_gamma3}, we need, beside $\eqref{eq:CellProblem_chi_ij}$, the following cell problem: We define $\chi_{ij}^B \in H^1_{\#}(Z_s)^3$ for $i,j=1,2,3$ as the unique weak solutions of the cell problems
\begin{align}
\begin{aligned}\label{eq:cell_problem_chi_ij_B}
-\nabla_y \cdot \left( A(D_y(\chi_{ij}^B) - y_3 M_{ij})\right) &= 0 &\mbox{ in }& Z_s,
\\
-A(D_y(\chi_{ij}^B) - y_3 M_{ij})\nu &= 0 &\mbox{ on }& \Gamma,
\\
\chi_{ij}^B \mbox{ is } Y\mbox{-periodic, } & \int_{Z^f} \chi_{ij}^B dy = 0.
\end{aligned}
\end{align}
Existence and uniqueness follows again as a consequence of the Korn inequality. Now, we give the representation of $u_2$:
\begin{proposition}\label{prop:representation_u2}
Let $u_0^3$, $\hat{u}_1$, and $u_2$ be the limit functions from Proposition \ref{prop:compactness_disp_gamma3}. Then it holds for almost every $(t,\x,y) \in (0,T)\times \Sigma \times Z_s$ that
\begin{align*}
    u_2(t,\x,y) = \sum_{i,j=1}^2 \left[D_{\x}(\hat{u}_1)_{ij}(t,\x) \chi_{ij}(y) + \partial_{ij} u_0^3 (t,\x) \chi_{ij}^B(y)\right],
\end{align*}
with the cell solutions $\chi_{ij}$ and $\chi_{ij}^B$ for the cell problems $\eqref{eq:CellProblem_chi_ij}$ and $\eqref{eq:cell_problem_chi_ij_B}$, respectively.
\end{proposition}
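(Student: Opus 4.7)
The proof follows the same strategy as Proposition \ref{prop:representation_u1}, but with a test function scaling adapted to the $\gamma = 3$ regime. My plan is as follows.

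First, I would test the weak microscopic equation \eqref{eq:Var_Micro_veps} with $\phieps(t,x) := \varepsilon^2 \phi\btxfxe$ on $(0,T)\times \oem$, extended by zero to the bulk domains $\oeps^\pm$, where $\phi \in C_0^\infty((0,T)\times \Sigma, C_\#^\infty(\overline{Z}))^3$ vanishes on $S^\pm$. The exponent $\varepsilon^2$ is chosen so that the elastic stress term $\varepsilon^{-3}\int_{\oems} A_\varepsilon D(\ueps) : D(\phieps)\,dx$, whose leading contribution comes from $D(\phieps) \simeq \varepsilon D_y(\phi)$ and is therefore of order $\varepsilon^{-2}\int_{\oems} D(\ueps) : D_y(\phi)\,dx$, attains a finite nontrivial limit via the two-scale convergence $\chi_{\oems} D(\ueps)/\varepsilon \rats \chi_{Z_s}\bigl[D_{\x}(\hat{u}_1) - y_3 \nabla_{\x}^2 u_0^3 + D_y(u_2)\bigr]$ from Proposition \ref{prop:compactness_disp_gamma3}.

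Second, with the aid of the \textit{a priori} bounds in Lemmas \ref{lem:apriori_velocity_displacement} and \ref{lem:apriori_pressure}, I would verify that with this choice of $\phieps$ all remaining contributions in \eqref{eq:Var_Micro_veps} are $o(1)$ as $\varepsilon \to 0$: the bulk forcing terms vanish by support, the inertial term $\int_{\oems} \partial_{tt}\ueps\cdot \phieps\,dx$ is $O(\varepsilon^{5/2})$ because $|\oems|^{1/2} \sim \sqrt{\varepsilon}$, and the flow contributions in $\oemf$ (including $\vareps\int_{\oemf} D(\veps^M):D(\phieps)\,dx$ and the pressure term $\vareps^{-1}\int_{\oemf} \peps^M \nabla\cdot \phieps\,dx$) vanish using $\sqrt{\varepsilon}\,\|\nabla \veps^M\|_{L^2}\le C$ and, crucially, $\|\peps^M\|_{L^2}\le C\varepsilon$ from Lemma \ref{lem:apriori_pressure} together with $\chi_{\oemf}\peps^M \rats 0$. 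Integration in time and passage to the limit then yield
\begin{align*}
\int_0^T \int_\Sigma \int_{Z_s} A\bigl[D_{\x}(\hat{u}_1) - y_3 \nabla_{\x}^2 u_0^3 + D_y(u_2)\bigr] : D_y(\phi)\,dy\,d\x\,dt = 0.
\end{align*}

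Third, by a density argument this identity extends to arbitrary $\phi \in L^2((0,T)\times \Sigma, H_\#^1(Z_s))^3$, so for almost every $(t,\x)$ the corrector $u_2(t,\x,\cdot)$ is the unique element of $H_\#^1(Z_s)^3/\R^3$ solving the cell problem with source $D_{\x}(\hat{u}_1)-y_3 \nabla_{\x}^2 u_0^3$. Decomposing this source as $\sum_{i,j=1}^2 [D_{\x}(\hat{u}_1)_{ij}\, M_{ij} + \partial_{ij} u_0^3 (-y_3 M_{ij})]$ and invoking the linearity of the cell problem together with the defining problems \eqref{eq:CellProblem_chi_ij} and \eqref{eq:cell_problem_chi_ij_B} yields the stated representation
\begin{align*}
u_2(t,\x,y) = \sum_{i,j=1}^2 \bigl[D_{\x}(\hat{u}_1)_{ij}(t,\x)\, \chi_{ij}(y) + \partial_{ij} u_0^3(t,\x)\, \chi_{ij}^B(y)\bigr].
\end{align*}
The main obstacle is the scaling analysis in the second step: one has to track carefully the powers of $\varepsilon$ arising from the thin-layer normalization in the two-scale convergence, the $\sqrt{\varepsilon}$ factor from $|\oems|^{1/2}$, and the specific orders of $\partial_{tt}\ueps$, $\veps^M$, $D(\veps^M)$ and $\peps^M$, in order to conclude that only the elastic stress term survives in the limit.
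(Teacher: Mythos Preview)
Your proposal is correct and follows essentially the same approach as the paper: choosing $\phieps = \vareps^2 \phi\btxfxe$ in $\oem$ (vanishing on $S^{\pm}$ and in the bulk), showing that all fluid and inertial terms are $o(1)$ so that only the elastic stress survives, passing to the two-scale limit to obtain the cell problem for $u_2$, and then invoking linearity and uniqueness to get the representation via $\chi_{ij}$ and $\chi_{ij}^B$. The paper merely sketches this argument and refers to \cite{gahn2022derivation} for the details you have spelled out.
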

\begin{proof}
See \cite[Proposition 6.1]{gahn2022derivation} for details and we only sketch the idea. As a test-function in the weak microscopic equation $\eqref{eq:Var_Micro_veps}$ we choose $\phieps=0$ in $\oeps^{\pm}$ and $\phieps(t,x)= \vareps^2 \phi\btxfxe$ for $(t,x)\in (0,T)\times \oeps^M$ with $\phi \in C^{\infty}_0((0,T)\times \Sigma,C_{\#}^{\infty}(\overline{Z}))^3$ with $\phi = 0$ on $S^{\pm}$. For $\vareps \to 0$ we obtain together with a density argument that almost everywhere in $(0,T)\times \Sigma$
\begin{align*}
    \int_{Z_s} A\left[D_{\x}(\hat{u}_1) - y_3 \nabla_{\x}^2 u_0^3  + D_y(u_2) \right] : D_y(\phi^M) dy = 0
\end{align*}
for all $\phi^M \in H^1_{\#}(Z_s)^3$. This implies the desired result.
\end{proof}

\begin{remark}
The proof of Proposition \ref{prop:representation_u2} shows that $u_2$ is the unique weak solution of the cell problem
\begin{align*}
- \nabla_y \cdot \left( A(D_{\x}(\hat{u}_1) - y_3 \nabla_{\x}^2 u_0^3 + D_y(u_2))\right) &= 0 &\mbox{ in }& (0,T)\times \Sigma \times Z_s,
\\
- A(D_{\x}(\hat{u}_1) - y_3 \nabla_{\x}^2 u_0^3 + D_y(u_2))\nu &= 0 &\mbox{ on }& (0,T)\times \Sigma \times \Gamma,
\\
u_2 \mbox{ is } Y\mbox{-periodic, }& \int_{Z_s} u_2 dy = 0.
\end{align*}
\end{remark}
Now, we derive the macroscopic equation for $u_0^3$ and $\hat{u}_1$. This can be done by choosing the same test-functions in $\eqref{eq:Var_Micro_veps}$ as in \cite[Section 6]{gahn2022derivation}. However, here we choose a slightly different approach more related to the choice of test-functions in Section \ref{sec:limit_problem_gamma1}. Thus, consider $\phieps^{\pm} = 0$ in $\oeps^{\pm}$ and  for $x \in \oeps^M$ let 
\begin{align*}
\phi^M_{\vareps}(x):= \psi_3(\x) q_3^{\Gamma}\left(\frac{x}{\vareps}\right) + \vareps \sum_{i=1}^2 \left( U_i(\x) - \frac{x_3}{\vareps} \partial_i \psi_3(\x) \right) q_i^{\Gamma}\left(\frac{x}{\vareps}\right),
\end{align*}
where, $\psi_3 \in H_0^2(\Sigma)$ and $\hat{U}:=(U_1,U_2) \in H_0^1(\Sigma)^2$. We have
\begin{align*}
    D(\phieps^M)(x) = \vareps \left(D_{\x}(\hat{U})(\x) - \frac{x_3}{\vareps} \nabla_{\x}^2 \psi_3(\x)\right) \qquad \mbox{ in } \oems.
\end{align*}
To compute $D(\phieps^M)$ in $\oemf$, we first use the formula $\nabla (av) = \nabla a \otimes v + a \nabla v$, for a scalar function $a$ and a vector field $v$, to obtain:
\begin{align*}
\nabla \phieps^M(x) =& \nabla_{\x} \psi_3(\x) \otimes q_3^{\Gamma}\left(\fxe\right) + \foe \psi_3(\x) \nabla_y q_3^{\Gamma}\left(\fxe\right) 
\\
&+ \vareps \sum_{i=1}^2 \left(\nabla_{\x} U_i(\x) - \frac{x_3}{\vareps} \nabla_{\x} \partial_i \psi_3(\x) - \foe \partial_i \psi_3 (\x) e_3 \right) \otimes q_i^{\Gamma}\left(\fxe\right)
\\
&+ \sum_{i=1}^2 \left(U_i (\x) - \frac{x_3}{\vareps}\partial_i \psi_3(\x)\right) \nabla_y q_i^{\Gamma}\left(\fxe\right).
\end{align*}
This implies
\begin{align*}
D(\phieps^M)(x) =& \foe \psi_3(\x) D_y(q_3^{\Gamma})\left(\fxe\right)
\\
&+ \bigg\{ sym \left( \nabla_{\x} \psi_3(\x) \otimes q_3^{\Gamma}\left(\fxe\right)\right) + \sum_{i=1}^2 \partial_i \psi_3(\x) sym\left(e_3 \otimes q_i^{\Gamma}\left(\fxe\right)\right)
\\
&\hspace{2em} + \sum_{i=1}^2 \left(U_i(\x) - \frac{x_3}{\vareps} \partial_i \psi_3(\x)\right) D_y(q_i^{\Gamma})\left(\fxe\right) \bigg\}
\\
&+ \vareps \sum_{i=1}^2 sym\left(\left(\nabla_{\x} U_i(\x) - \frac{x_3}{\vareps}\nabla_{\x} \partial_i \psi_3(\x) \right) \otimes q_i^{\Gamma}\left(\fxe\right)\right)\qquad \mbox{ in } \oemf.
\end{align*}
\\
Now, using $tr(a \otimes b) = a\cdot b$, we get with $tr D_y(q_i^{\Gamma}) = \nabla_y \cdot q_i^{\Gamma} = 0$ for $i=1,2,3$, that
\begin{align} \label{DivphiepsM}
\nabla \cdot \phieps^M (x) =& \nabla_{\x} \psi_3(\x) \cdot q_3^{\Gamma}\left(\fxe\right) + \sum_{i=1}^2 \partial_i \psi_3(\x) [q_i^{\Gamma}]_3\left(\fxe\right) \nonumber\\
&+ \vareps \sum_{i=1}^2 \left( \nabla_{\x} U_i(\x) - \frac{x_3}{\vareps}\nabla_{\x} \partial_i \psi_3(\x)\right) \cdot q_i^{\Gamma}\left(\fxe\right) \qquad \mbox{ in } \oemf.
\end{align}
Inserting now $\phieps = (0,\phieps^M,0)$ as a test function in $\eqref{eq:Var_Micro_veps}$, and using the \textit{a priori} estimates from Section \ref{sec:apriori}, we obtain 
\begin{align*}
   \foe \int_{\oemf} \vareps &D(\veps^M) : D_y(q_3^{\Gamma})\left(\fxe\right) \psi_3(\x) dx 
   \\
   &+ \foe \int_{\oems} A_{\vareps} \frac{D(\ueps)}{\vareps} : \left(D_{\x}(\hat{U}) - \frac{x_3}{\vareps} \nabla_{\x}^2 \psi_3 \right) dx = \mathcal{O}(\vareps).
\end{align*}
Hence, after multiplication with smooth time dependent functions  and integration with respect to time, for $\vareps \to 0$ we obtain from the compactness results in Section \ref{sec:compactness_results} (in particular, note that the expression \eqref{DivphiepsM} together with $\peps^M \rats 0$ implies that the pressure term vanishes for $\vareps \to 0$) {almost everywhere in $(0,T)$
\begin{align}
\begin{aligned}\label{eq:aux_deri_macro_displ_gamma3}
\int_{\Sigma} &\psi_3 \int_{Z_f} D_y(v_0^M) : D_y(q_3^{\Gamma}) dy d\x 
\\
&+ \int_{\Sigma} \int_{Z_s} A \left[ D_{\x}(\hat{u}_1) - y_3 \nabla_{\x}^2 u_0^3 + D_y(u_2)\right] : \left[ D_{\x}(\hat{U}) - y_3 \nabla_{\x}^2 \psi_3 \right] dy d\x = 0.
\end{aligned}
\end{align}
For the first term we use the representation of $v_0^M$ from Proposition \ref{prop:representation_v_0^M}, see also Remark \ref{rem:representation_v_0^M}, to obtain (with the effective coefficients $K^\pm, L^\pm,L^{\Gamma}$ defined in Section \ref{sec:limit_problem_gamma1}) almost everywhere in $(0,T)\times \Sigma$ 
\begin{align*}
 &\int_{Z_f} D_y(v_0^M) : D_y(q_3^{\Gamma}) dy = \partial_t u_0^3  \int_{Z_f} D_y(q_3^{\Gamma}) : D_y(q_3^{\Gamma} ) dy 
\\
&+ \sum_{\alpha \in \{\pm\}} \sum_{i=1}^3  [v_0^{\alpha}]_i  \int_{Z_f} D_y(q_i^{\alpha}) : D_y(q_3^{\Gamma}) dy 
= L^{\Gamma}_{33} \partial_t u_0^3 - \left[K^+ v_0^+ \cdot \nu^+  - K^- v_0^- \cdot \nu^- \right] ,
\end{align*}
where at the end we also used $L^{\pm}e_3 = -K^{\pm}e_3$, see Lemma \ref{lem:relation_effective_coefficients_L}.
For the second term on the left-hand side in $\eqref{eq:aux_deri_macro_displ_gamma3}$, we obtain with the decomposition of $u_2$ from Proposition \ref{prop:representation_u2} after an elemental calculation
\begin{align*}
 \int_{\Sigma} &\int_{Z_s} A \left[ D_{\x}(\hat{u}_1) - y_3 \nabla_{\x}^2 u_0^3 + D_y(u_2)\right] : \left[ D_{\x}(\hat{U}) - y_3 \nabla_{\x}^2 \psi_3 \right] dy d\x 
 \\
 &= \int_{\Sigma} a^{\ast} D_{\x}(\hat{u}_1) : D_{\x}(\bar{U}) + b^{\ast} \nabla_{\x}^2 u_0^3 : D_{\x}(\bar{U}) + b^{\ast} D_{\x}(\hat{u}_1) : \nabla_{\x}^2 \psi_3 + c^{\ast} \nabla_{\x}^2 u_0^3 : \nabla_{\x}^2 \psi_3 d\x
\end{align*}
with the effective coefficients $a^{\ast},b^{\ast},c^{\ast} \in \R^{2\times 2 \times 2 \times 2}$ defined by ($\alpha,\beta,\gamma,\delta = 1,2$)
\begin{align}
\begin{aligned}
\label{def:effective_coefficients_plate}
a^{\ast}_{\alpha\beta\gamma\delta} &:=  \frac{1}{\vert Z^s \vert} \int_{Z^s} A  \left(D_y(\chi_{\alpha \beta}) + M_{\alpha\beta}\right): \left(D_y (\chi_{\gamma\delta})  + M_{\gamma\delta} \right)dy,
\\
b^{\ast}_{\alpha\beta\gamma\delta} &:=   \frac{1}{\vert Z^s \vert} \int_{Z^s} A \left(D_y(\chi_{\alpha \beta}^B)  - y_3 M_{\alpha\beta} \right) : \left(D_y (\chi_{\gamma\delta})  + M_{\gamma\delta} \right)dy,
\\
c^{\ast}_{\alpha\beta\gamma\delta} &:=   \frac{1}{\vert Z^s \vert} \int_{Z^s} A  \left(D_y(\chi_{\alpha \beta}^B)  - y_3 M_{\alpha\beta} \right): \left(D_y (\chi_{\gamma\delta}^B)  -y_3 M_{\gamma\delta}\right)dy.
\end{aligned}
\end{align}
Altogether, we obtain the macroscopic plate equation in $\eqref{eq:macro_model_displacement_gamma3}$. 

\begin{corollary}
The solution  $(v_0,p_0,u_0^3,\hat{u}_1)$  of the macroscopic problem  $\eqref{def:Macro_Stokes_model_strong_gamma3}  \, + \, \eqref{eq:macro_model_displacement_gamma3}$ is unique.
\end{corollary}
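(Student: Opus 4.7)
The plan is to reproduce the energy argument from the case $\gamma = 1$, suitably adapted for the Kirchhoff--Love plate. Take two weak solutions $(v_0^{i,\pm}, p_0^{i,\pm}, u_0^{i,3}, \hat{u}_1^i)$, $i = 1, 2$, of $\eqref{def:Macro_Stokes_model_strong_gamma3} + \eqref{eq:macro_model_displacement_gamma3}$, and let $\delta v_0^{\pm}$, $\delta p_0^{\pm}$, $\delta u_0^3$, $\delta \hat{u}_1$ denote the differences. By linearity they satisfy the corresponding weak equations with vanishing right-hand sides; the initial data $\delta v_0^{\pm}(0) = 0$ and $\delta u_0^3(0) = 0$ are inherited from the limit passage, and $\delta \hat{u}_1(0) = 0$ follows by evaluating the first plate equation at $t = 0$ and using coercivity of $a^{\ast}$ together with the homogeneous Dirichlet boundary condition.

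Next, test the subtracted weak fluid equation with $\phi^{\pm} = \delta v_0^{\pm}$ and the subtracted plate equation with $\psi_3 = \partial_t \delta u_0^3$ and $\hat U = \partial_t \delta \hat{u}_1$; these are admissible in view of the regularity stated in Theorem \ref{thm:main_gamma3}. Using the equivalence $\eqref{eq:Relation_LB_LKM}$ to rewrite the $\Sigma$-terms of the fluid equation in $L^{\pm}$-$B^{\alpha,\beta}$-form, and recasting the plate stress symmetrically as $\int_{Z_s} A \mathcal{D}_\delta : \mathcal{D}_\delta \, dy$ with $\mathcal{D}_\delta := D_{\x}(\delta \hat{u}_1) - y_3 \nabla_{\x}^2 \delta u_0^3 + D_y(\delta u_2)$ (via Proposition \ref{prop:representation_u2}), so that the cross terms in the plate assemble into a total time derivative $\frac{d}{dt}\mathcal{E}_{\mathrm{pl}}(t)$, the two energy identities become
\begin{align*}
\tfrac{1}{2} \tfrac{d}{dt} \sum_{\pm} \|\delta v_0^{\pm}\|_{L^2(\Omega^{\pm})}^2 + \sum_{\pm} \|D(\delta v_0^{\pm})\|_{L^2(\Omega^{\pm})}^2 + \int_{\Sigma} \Bigl[\partial_t \delta u_0^3 \sum_{\pm} (L^{\pm} e_3) \cdot \delta v_0^{\pm} + \sum_{\alpha, \beta} B^{\alpha, \beta} \delta v_0^{\alpha} \cdot \delta v_0^{\beta}\Bigr] d\x &= 0, \\
\tfrac{d}{dt} \mathcal{E}_{\mathrm{pl}}(t) + \int_{\Sigma} L^{\Gamma}_{33} |\partial_t \delta u_0^3|^2 d\x - \int_{\Sigma} \partial_t \delta u_0^3 \sum_{\pm} (L^{\pm} e_3) \cdot \delta v_0^{\pm} d\x &= 0,
\end{align*}
where the reformulation of the plate right-hand side uses $\nu^{\pm} = \mp e_3$ together with the identity $(K^{\pm})^T e_3 = -L^{\pm} e_3$. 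This last identity is a direct consequence of the third formula in Lemma \ref{lem:relation_effective_coefficients_L} combined with the symmetry $B^{\pm,\pm}_{ij} = B^{\pm,\pm}_{ji}$ coming from $\eqref{Def:B}$. Adding the two identities, the coupling contributions involving $(L^{\pm} e_3) \cdot \delta v_0^{\pm}$ cancel exactly, leaving a dissipation identity whose left-hand side contains only non-negative terms: the $B^{\alpha,\beta}$-form by Lemma \ref{lem:Boundary_positive_semidefinit} with $\xi^{\Gamma} = 0$, the $L^{\Gamma}_{33}$-term since $L^{\Gamma}$ is positive definite, and $\mathcal{E}_{\mathrm{pl}}$ by coercivity of $A$.

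Integrating in time from $0$ to $t$ and using the vanishing initial energy forces $\delta v_0^{\pm} \equiv 0$ and $\partial_t \delta u_0^3 \equiv 0$, hence $\delta u_0^3 \equiv 0$; inserting this back into the first plate equation and invoking coercivity of $a^{\ast}$ with Dirichlet data yields $\delta \hat{u}_1 \equiv 0$. Pressure uniqueness then follows by a standard Bogovskii construction: given $q^{\pm} \in L_0^2(\Omega^{\pm})$, one produces $\phi^{\pm} \in H^1(\Omega^{\pm}, \partial_D \Omega^{\pm})^3$ with $\nabla \cdot \phi^{\pm} = q^{\pm}$ and $[\phi^+]_3 = [\phi^-]_3 = 0$ on $\Sigma$, so that the fluid weak equation with $\delta v_0^{\pm} = 0$ reduces to $\int_{\Omega^{\pm}} \delta p_0^{\pm} q^{\pm} dx = 0$. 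The main obstacle is the exact cancellation of the interface coupling, which hinges on the algebraic identities between $L^{\pm}$, $K^{\pm}$ and $L^{\Gamma}$; at a conceptual level this cancellation reflects the fact that the $\Sigma$-boundary contributions always assemble into the non-negative Dirichlet form $\|D_y(\xi^M)\|_{L^2(Z_f)}^2$ of Lemma \ref{lem:Boundary_positive_semidefinit}, and that in the present case the $\xi^{\Gamma}$-contribution of this form is captured entirely by the dissipative term $L^{\Gamma}_{33} |\partial_t \delta u_0^3|^2$ coming from the plate equation.
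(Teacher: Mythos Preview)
Your proof is correct and follows the same energy-method route that the paper's terse argument alludes to (``coercivity \ldots\ hence the uniqueness easily follows''), but you actually carry out the computation: you verify the exact cancellation of the $\Sigma$-coupling terms via Lemma~\ref{lem:relation_effective_coefficients_L} and the symmetry of $B^{\pm,\pm}$, and you identify the plate stress as a total time-derivative by going back to the two-scale form $\int_{Z_s} A\mathcal{D}_\delta:\mathcal{D}_\delta\,dy$ through Proposition~\ref{prop:representation_u2}. Two minor points: (i) your Bogovskii step with $q^{\pm}\in L_0^2(\Omega^{\pm})$ only shows $\delta p_0^{\pm}$ is constant in each bulk; the constant is fixed by allowing test functions with nonzero flux through $\partial_N\Omega^{\pm}$, which the stress boundary condition permits; (ii) the initial condition $u_0^3(0)=0$ is not explicitly stated in $\eqref{eq:macro_model_displacement_gamma3}$ in the paper, but must be understood as part of the macroscopic problem (as in the case $\gamma=1$), so your use of it is legitimate.
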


\begin{proof}
For the proof of the coercivity (for suitable combinations) of the operators $a^{\ast},\, b^{\ast}$ and $c^{\ast}$ we refer to the proof of \cite[Theorem 2]{griso2020homogenization}. Hence, the uniqueness easily follows.
\end{proof}

\section{Conclusion}
\label{sec:conclusion}

To conclude, let us summarize the effective interface laws at $\Sigma$ derived in this paper and relate them to other contributions from the literature. For the fluid flow, we obtain kinematic coupling conditions consisting of the continuity of the normal component of the velocity (for $\gamma = 1,3$): 
\begin{align} \label{Cont_normal_v}
  [v_0^+]_3 = [v_0^-]_3,  
\end{align}
and slip conditions of Navier-type involving the tangential component of the effective normal stress in the fluid and the time derivative of the effective displacement $u_0$:
\begin{align}
\gamma &= 1: \quad - [(D(v_0^{\pm}) - p_0^{\pm}I)\nu^{\pm}]_t = [L^\pm\partial_t u_0]_t +[K^{\pm}v_0^{\pm}]_t + M^{\mp}v_0^{\mp}, \label{Slip_cond_gamma_1}\\
\gamma &= 3: \quad - [(D(v_0^{\pm}) - p_0^{\pm}I)\nu^{\pm}]_t = \partial_t u_0^3[L^{\pm} e_3]_t +[K^{\pm}v_0^{\pm}]_t + M^{\mp}v_0^{\mp}, \label{Slip_cond_gamma_3} 
\end{align}
Here, the effective coefficients $K^\pm, M^\pm,$ and $L^\pm$ are computed from suitable Stokes-cell problems on $Z_f$. 
Furthermore, we have transmission conditions for the normal component of the effective normal stress in the fluid:
\begin{align}
\gamma &= 1: \quad -\llbracket (D (v_0) - p_0 I ) \nu\cdot \nu\rrbracket = 0, \label{Jump_normal_comp_normal_stress_1}\\
\gamma &= 3: \quad -\llbracket (D (v_0) - p_0 I ) \nu\cdot \nu\rrbracket =  -L^{\Gamma}_{33} \partial_t u_0^3 + K^+ v_0^+ \cdot \nu^+  - K^- v_0^- \cdot \nu^-,\label{Jump_normal_comp_normal_stress_3}
\end{align}
with effective coefficients $L^\Gamma$ and $K^\pm$ being computed again from suitable Stokes-cell problems on $Z_f$. 
Whereas for the case $\gamma = 1$ the condition \eqref{Jump_normal_comp_normal_stress_1} implies continuity of the normal component of the effective normal stress across $\Sigma$, the condition \eqref{Jump_normal_comp_normal_stress_3} represents, see \eqref{Eq_displ_gamma_3_2} below, a dynamic coupling condition describing the balance of forces at $\Sigma$, namely, the effective body force on the Kirchhoff-Love plate is given by the jump in the normal component of the effective normal stress in the fluid.

The homogenization of the elasticity equations in the membrane leads to effective interface laws for the displacement $u^0$ at $\Sigma$. We obtain an evolution equation for $\gamma =1$,
\begin{align}
 L^{\Gamma} \partial_t u_0 - \nabla_{\x} \cdot (A^{\ast} D_{\x}(\hat{u}_0)) &=- \sum_{\pm} [L^{\pm}]^T v_0^{\pm},   
\end{align} 
with an effective elasticity tensor $A^{\ast}$ defined by means of suitable elasticity-cell problem on $Z_s$, 
and Kirchhoff-Love plate equations for the case $\gamma=3$:
\begin{align}
-\nabla_{\x} \cdot \left(a^{\ast} D_{\x}(\hat{u}_1) + b^{\ast} \nabla_{\x}^2 u_0^3 \right) &= 0 \label{Eq_displ_gamma_3_1},\\
\nabla_{\x}^2 : \left(b^{\ast} D_{\x}(\hat{u}_1) + c^{\ast}\nabla_{\x}^2 u_0^3 \right) &= -\llbracket (D (v_0) - p_0 I ) \nu\cdot \nu\rrbracket \label{Eq_displ_gamma_3_2},
\end{align}
with effective coefficients $a^{\ast}, b^{\ast}$, and $c^{\ast}$ computed again from elasticity-cell problems on $Z_s$. 

In our previous contribution \cite{gahn2022derivation}, we considered a fluid-structure interaction of a fluid with an elastic porous membrane with different scalings of the equations in the porous membrane. This resulted in different transmission conditions, especially for the fluid velocity at the effective interface $\Sigma$. There, all components of the fluid velocity were continuous across the interface $\Sigma$, and equal to the solid velocity $\partial_t u_0$, meaning no fluid flow relative to the solid, and thus no transversal flux through the membrane. Comparing the results of the present paper with those in \cite{gahn2025effective}, where fluid flow and transport through a rigid porous membrane was considered, assuming the same scaling in the flow equations as in the present paper, we can observe that the elastic structure contributes to the slip transmission conditions for the fluid \eqref{Slip_cond_gamma_1}-\eqref{Slip_cond_gamma_3} as well as to the jump condition in the normal component of the effective normal stress in the fluid \eqref{Jump_normal_comp_normal_stress_1}-\eqref{Jump_normal_comp_normal_stress_3}. It doesn't, however, influence the continuity of the normal component of the velocity \eqref{Cont_normal_v}. In the recent contribution \cite{kuan2024fluid} the authors study the interaction problem between the flow of an incompressible, viscous fluid modeled by the Navier-Stokes equations, and a poroviscoelastic medium modeled by the Biot equations, which are coupled across an interface with mass and elastic energy, modeled by a reticular plate equation. The coupling conditions used there (derived by phenomenological arguments) correspond in part with those derived in the present paper, like \eqref{Cont_normal_v} and \eqref{Jump_normal_comp_normal_stress_3} or are of a similar type, like \eqref{Slip_cond_gamma_1}-\eqref{Slip_cond_gamma_3}. We remark, however, that the homogenization procedure gives us the possibility to derive effective coefficients with an explicit dependence on the microscopic structure of the thin membrane. The coupling between a free fluid and a poroelastic medium across a thin membrane (modelling the endothelial layer in the blood vessels) was also considered in \cite{SilvaJaeger2020}, where again coupling conditions across the membrane are formulated by heuristic arguments. Here again the continuity of the normal component of the velocity \eqref{Cont_normal_v} is assumed at the interface. However, for the tangential component of the velocity a no-slip condition is considered. Let us mention, that the rigorous derivation of interface conditions for the interaction of a free fluid and a porous/poroelastic medium separated by a porous/poroelastic membrane are still missing. The techniques developed in this paper for the derivation of interface conditions between two fluid bulk domains separated by an elastic porous membrane, based on (two-scale) convergence in the bulk domains and in the porous membrane, are a first step towards more complex problems, like those mentioned above.

\renewcommand{\theequation}{\thesection.\arabic{equation}}
\setcounter{equation}{0} %counter zurücksetzen

\renewcommand{\thelemma}{\thesection.\arabic{lemma}}
\setcounter{lemma}{0} %counter zurücksetzen

\renewcommand{\thedefinition}{\thesection.\arabic{definition}}
\setcounter{definition}{0} %counter zurücksetzen

\renewcommand{\theremark}{\thesection.\arabic{remark}}
\setcounter{remark}{0} %counter zurücksetzen

\renewcommand{\thecorollary}{\thesection.\arabic{corollary}}
\setcounter{corollary}{0} %counter zurücksetzen

\begin{appendix}
\section{Auxiliary inequalities}

We summarize some important inequalities for perforated layers from the literature.  We start with the following well known Korn and Poincar\'e inequality for functions vanishing on $\geps$.

\begin{lemma}[Korn and Poincar\'e inequality]\label{lem:Korn_inequality}
There exists a constant $C>0$, such that for all $\ueps \in H^1(\oemf)^n$ with $\ueps = 0$ on $\geps$ it holds that
\begin{align*}
    \|\ueps\|_{L^2(\oemf)} \le C\vareps \|\nabla \ueps \|_{L^2(\oemf)} \le C\vareps \|D(\ueps)\|_{L^2(\oemf)}.
\end{align*}
\end{lemma}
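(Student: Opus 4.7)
The plan is to reduce the scaled inequalities on the perforated layer $\oemf$ to their classical, unscaled counterparts on the single reference cell $Z_f$, and then use the periodic covering $\oemf = \mathrm{int}\big(\bigcup_{k\in K_\vareps}\vareps(\overline{Z_f}+k)\big)$ to add the cell-wise estimates up. First, I would record the two reference-cell inequalities: for every $u \in H^1(Z_f)^n$ with $u=0$ on $\Gamma$,
\begin{equation*}
\|u\|_{L^2(Z_f)} \;\le\; C_P \,\|\nabla u\|_{L^2(Z_f)}, \qquad \|\nabla u\|_{L^2(Z_f)} \;\le\; C_K \,\|D(u)\|_{L^2(Z_f)}.
\end{equation*}
Both are classical on the bounded connected Lipschitz domain $Z_f$ once one knows that $\Gamma = \mathrm{int}(\overline{Z_s}\cap \overline{Z_f})$ carries positive surface measure (which is automatic since $Z_s$ is open and non-empty). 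The Poincaré inequality with trace vanishing on a portion of positive measure is standard; the Korn inequality with the same type of Dirichlet datum follows from the classical Korn second inequality on $Z_f$ combined with the fact that rigid displacements vanishing on $\Gamma$ are identically zero.

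Next, I would localize. Fix $k \in K_\vareps$ and let $\hat{u}(y) := \ueps(\vareps(y+k))$ for $y \in Z_f$. Since $\ueps = 0$ on $\geps$, the trace $\hat{u}|_\Gamma = 0$, so $\hat u$ is admissible in the reference-cell inequalities. A direct change of variables $x=\vareps(y+k)$ gives the scalings
\begin{equation*}
\|\ueps\|^2_{L^2(\vareps(Z_f+k))} = \vareps^n \|\hat u\|^2_{L^2(Z_f)}, \qquad \|\nabla \ueps\|^2_{L^2(\vareps(Z_f+k))} = \vareps^{n-2}\|\nabla_y \hat u\|^2_{L^2(Z_f)},
\end{equation*}
and similarly for $D(\ueps)$. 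Applying the reference-cell Poincaré inequality to $\hat u$ and translating back through these identities yields $\|\ueps\|_{L^2(\vareps(Z_f+k))} \le C_P\, \vareps \,\|\nabla\ueps\|_{L^2(\vareps(Z_f+k))}$, and analogously the cell-wise Korn inequality $\|\nabla\ueps\|_{L^2(\vareps(Z_f+k))} \le C_K\, \|D(\ueps)\|_{L^2(\vareps(Z_f+k))}$ (no $\vareps$-factor here, since gradient and symmetric gradient scale the same way).

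Finally, I would sum the squared cell inequalities over all $k\in K_\vareps$. Because $\{\vareps(Z_f+k)\}_{k\in K_\vareps}$ is a disjoint (up to null sets) cover of $\oemf$ and the constants $C_P, C_K$ are independent of $k$ and $\vareps$, this produces the desired global estimates with $C=\max\{C_P,C_K\}$. The main obstacle is purely the justification of the reference-cell Poincaré and Korn inequalities with Dirichlet trace on $\Gamma$; once these are in place, the scaling and summation are routine. No subtlety arises from the coupling to the bulk or from the lateral Dirichlet parts of $\partial\oemf$, since the argument uses only the vanishing of $\ueps$ on $\geps$, and the periodic tiling covers $\oemf$ exactly by the assumption $\vareps^{-1}\in\N$.
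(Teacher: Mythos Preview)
Your argument is correct and is precisely the standard cell-decomposition proof; the paper itself gives no proof of this lemma, treating it as well known from the literature. The scaling and summation you describe are exactly the routine that underlies the cited results, so there is nothing to add.
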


Next, we give the trace inequality on $\geps$, which is also well-known and can be obtained by a simple domain decomposition.
\begin{lemma}[Standard scaled trace inequality]\label{lem:Stand_scaled_trace_inequality}
For all $\theta >0$ there exists $C_\theta>0$  independent of $\vareps$, such that for all $\phieps \in H^1(\Omega_\vareps^{M,\ast}), \ast \in \{s,f\}$, it holds that
\begin{align*}
    \|\phieps\|_{L^2(\geps)} \le \frac{C_\theta}{\sqrt{\vareps}} \|\phieps\|_{L^2(\oeps^{M,\ast})}
    + \theta \sqrt{\vareps}\|\nabla \phieps \|_{L^2(\oeps^{M,\ast})}.
\end{align*}

\end{lemma}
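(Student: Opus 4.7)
The plan is to reduce the estimate to a scaling-invariant inequality on the fixed reference cell $Z_\ast$ and then assemble the global bound by summing over the $\varepsilon$-periodic copies that constitute $\oeps^{M,\ast}$ and $\geps$.

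First I would establish the following Ehrling-type refinement of the standard trace theorem on the reference cell: for every $\theta > 0$ there exists $C_\theta > 0$ such that
$$\|\psi\|_{L^2(\Gamma)}^2 \le C_\theta \|\psi\|_{L^2(Z_\ast)}^2 + \theta^2 \|\nabla \psi\|_{L^2(Z_\ast)}^2 \qquad \text{for all } \psi \in H^1(Z_\ast).$$
This follows from the Ehrling lemma, combining the compact embedding $H^1(Z_\ast) \hookrightarrow\hookrightarrow L^2(Z_\ast)$ with the continuous trace map $H^1(Z_\ast) \to L^2(\Gamma)$. Alternatively, one can argue directly via the identity $\int_\Gamma |\psi|^2 \, d\sigma = \int_{Z_\ast} \nabla\cdot(\eta |\psi|^2)\,dy$ for a suitable smooth $Y$-periodic vector field $\eta$ with $\eta\cdot\nu_\ast = 1$ on $\Gamma$ (and $\eta\cdot\nu_\ast=0$ on $S^\pm$ in the case $\ast=f$), followed by Cauchy--Schwarz and Young's inequality to decouple the cross term $\|\psi\|_{L^2(Z_\ast)}\|\nabla\psi\|_{L^2(Z_\ast)}$. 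This is the only non-mechanical step and is what I would consider the main obstacle, although both formulations are classical for Lipschitz reference domains.

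Next, I would transfer this reference inequality to each shifted microcell $\varepsilon(Z_\ast + k)$ with $k \in K_\varepsilon$. Setting $\psi(y) := \phi_\varepsilon(\varepsilon(y+k))$ for $y \in Z_\ast$, the change of variables $x = \varepsilon(y+k)$ produces the scalings $\|\psi\|_{L^2(\Gamma)}^2 = \varepsilon^{-2}\|\phi_\varepsilon\|_{L^2(\varepsilon(\Gamma+k))}^2$, $\|\psi\|_{L^2(Z_\ast)}^2 = \varepsilon^{-3}\|\phi_\varepsilon\|_{L^2(\varepsilon(Z_\ast+k))}^2$, and $\|\nabla\psi\|_{L^2(Z_\ast)}^2 = \varepsilon^{-1}\|\nabla\phi_\varepsilon\|_{L^2(\varepsilon(Z_\ast+k))}^2$. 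Multiplying the reference-cell inequality through by $\varepsilon^2$ produces a single-cell estimate with precisely the scaling $\varepsilon^{-1}$ on the volume term and $\varepsilon$ on the gradient term demanded by the lemma.

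Finally, I would sum this single-cell estimate over $k \in K_\varepsilon$, using the essentially disjoint decompositions $\geps = \bigcup_k \varepsilon(\Gamma + k)$ and $\oeps^{M,\ast} = \bigcup_k \varepsilon(Z_\ast + k)$, and take square roots, absorbing the combinatorial factor from $\sqrt{a+b} \le \sqrt{a}+\sqrt{b}$ into a relabelling of $\theta$ and $C_\theta$. Since the summation and rescaling are routine and independent of $\varepsilon$, no further obstacle is expected.
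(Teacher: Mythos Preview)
Your proposal is correct and follows precisely the approach the paper indicates: the paper does not give a detailed proof but states that the inequality ``is also well-known and can be obtained by a simple domain decomposition,'' which is exactly what you carry out via the Ehrling-type trace estimate on the reference cell followed by rescaling and summation over the $\varepsilon$-cells. Your scaling computations for surface, volume, and gradient terms are all correct, and the care you take with the auxiliary vector field $\eta$ (requiring $\eta\cdot\nu_\ast=0$ on $S^\pm$ in the fluid case) is appropriate.
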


For functions in the thin layer with zero boundary conditions on the lateral boundary we have the following Korn inequality on thin perforated domains:
\begin{lemma}[Korn inequality thin perforated layers]\label{KornInequalityPerforatedLayer}
For all $\weps \in H^1(\Omega_{\epsilon}^{M,\ast})^3$ for $\ast \in \{s,f\}$ with $\weps = 0 $ on $ \partial_D \Omega_{\epsilon}^{M,\ast}$ it holds that 
\begin{align*}
\sum_{i=1}^2 \foe \Vert \weps^i\Vert_{L^2(\Omega_{\epsilon}^{M,\ast})} + \sum_{i,j=2}^2 \foe\Vert \partial_i \weps^j \Vert_{L^2(\Omega_{\epsilon}^{M,\ast})} + \Vert \weps^3\Vert_{L^2(\Omega_{\epsilon}^{M,\ast})} +  \Vert \nabla \weps \Vert_{L^2(\Omega_{\epsilon}^{M,\ast})} \le \frac{C}{\epsilon} \Vert D(\weps)\Vert_{L^2(\Omega_{\epsilon}^{M,\ast})}.
\end{align*}
\end{lemma}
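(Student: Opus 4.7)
The plan is to recast the inequality on the thin perforated layer as a Korn–Poincaré inequality on a fixed-thickness slab with $\varepsilon$-periodic microstructure, so that the factor $\varepsilon^{-1}$ emerges from the vertical rescaling alone and the remaining constants are $\varepsilon$-uniform.

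First I would perform the dimensional reduction $z := x_3/\varepsilon$, setting $\tilde w(\x,z) := \weps(\x,\varepsilon z)$ on the rescaled domain $\widetilde{\Omega}^{M,\ast}$ of unit half-thickness, which inherits an $\varepsilon$-periodic perforation in $\x$ and the Dirichlet data $\tilde w|_{\partial\Sigma\times(-1,1)}=0$. The change of variables yields the scaling relations $\Vert \weps \Vert_{L^2(\Omega_{\varepsilon}^{M,\ast})}=\sqrt{\varepsilon}\,\Vert \tilde w \Vert_{L^2(\widetilde{\Omega}^{M,\ast})}$, $\Vert\partial_\alpha \weps\Vert_{L^2}=\sqrt{\varepsilon}\,\Vert\partial_\alpha \tilde w\Vert_{L^2}$ for $\alpha=1,2$, and $\Vert\partial_3 \weps\Vert_{L^2}=\varepsilon^{-1/2}\Vert\partial_z \tilde w\Vert_{L^2}$. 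The symmetric gradient obeys $\Vert D(\weps)\Vert_{L^2}=\sqrt{\varepsilon}\,\Vert D_\varepsilon(\tilde w)\Vert_{L^2}$, where $D_\varepsilon(\tilde w)$ has tangential entries $\tfrac12(\partial_\alpha\tilde w^\beta+\partial_\beta\tilde w^\alpha)$, mixed entries $\tfrac12(\varepsilon^{-1}\partial_z\tilde w^\alpha+\partial_\alpha\tilde w^3)$, and $(3,3)$-entry $\varepsilon^{-1}\partial_z\tilde w^3$. After these substitutions and multiplication by $\varepsilon$, the claimed inequality is equivalent to an $\varepsilon$-uniform bound
\[
\sum_{\alpha=1}^2\Vert\tilde w^\alpha\Vert_{L^2} + \sum_{\alpha,\beta=1}^2\Vert\partial_\beta\tilde w^\alpha\Vert_{L^2} + \varepsilon\Vert\tilde w^3\Vert_{L^2} + \Vert\partial_z\tilde w\Vert_{L^2} + \varepsilon\Vert\nabla_{\x}\tilde w\Vert_{L^2} \le C\,\Vert D_\varepsilon(\tilde w)\Vert_{L^2}.
\]

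Next I would decompose $\tilde w$ in the spirit of the Griso plate decomposition, writing $\tilde w(\x,z)=\mathcal U(\x)+\mathcal R(\x)\wedge z e_3+\bar u(\x,z)$, with $\mathcal U,\mathcal R$ depending only on $\x$ and $\bar u$ an elementary residual with vanishing first moments in $z$. Applying the standard Korn inequality on the fixed reference cell $Z_\ast$ cell-by-cell controls $\bar u$ and its full gradient by $\Vert D_\varepsilon(\tilde w)\Vert_{L^2}$; the perforated geometry enters only through $Z_\ast$, whose Lipschitz character gives an $\varepsilon$-independent constant. The lateral Dirichlet condition descends to vanishing traces of $\mathcal U$ and $\mathcal R$ on $\partial\Sigma$, and a two-dimensional Korn–Poincaré inequality on $\Sigma$ then controls $\mathcal U,\mathcal R$ and their tangential derivatives. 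Reading off the different $\varepsilon$-weights appearing in $D_\varepsilon$ produces precisely the Kirchhoff–Love-type scaling of the rescaled bound above: the vertical component $\tilde w^3$ is the ``large'' one, while $\tilde w^\alpha$ and $\partial_\beta\tilde w^\alpha$ are improved by a factor $\varepsilon$.

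The main obstacle is obtaining an $\varepsilon$-uniform Korn estimate on the perforated slab $\Sigma\times Z_\ast$: the per-cell inequalities determine $\bar u$ only up to a rigid displacement, and one must propagate the Dirichlet anchoring from $\partial\Sigma$ across the connected perforation in order to pin these per-cell rigid motions down with an $\varepsilon$-independent constant; this is where the connectedness and periodic matching of $Z_\ast$, together with the lateral boundary data, are essential. Once this uniform Korn–Poincaré estimate is in place, reversing the rescaling by means of the identities of the first paragraph yields exactly the inequality in the statement.
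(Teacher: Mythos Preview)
The paper does not prove this lemma itself but defers entirely to \cite[Theorem 2]{GahnJaegerTwoScaleTools} and \cite[Theorem 6]{griso2020homogenization}; your plan, built around Griso's plate decomposition (rigid part $\mathcal U + \mathcal R\wedge z e_3$ plus elementary residual, per-cell Korn on the reference cell $Z_\ast$, and a two-dimensional Korn--Poincar\'e inequality on $\Sigma$ exploiting the lateral Dirichlet data), is exactly the machinery used in those references, so you have correctly reconstructed the argument.

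One minor remark: the initial vertical rescaling $z=x_3/\varepsilon$ is a legitimate bookkeeping device, but it leaves you with a fixed-thickness slab that still carries an $\varepsilon$-periodic horizontal microstructure, so the reference ``cells'' are now anisotropic boxes $\varepsilon Y\times(-1,1)$ rather than copies of $Z_\ast$. The cited proofs avoid this by working directly on the thin domain $\Omega_\varepsilon^{M,\ast}$ and decomposing into the genuine $\varepsilon$-cells $\varepsilon(Z_\ast+k)$, on which the standard Korn inequality with a fixed constant applies after a full rescaling; the $\varepsilon$-weights in the statement then emerge from the Griso decomposition itself rather than from a preliminary change of variables. Your route and theirs are equivalent, but if you carry out your version in detail you should be aware that the per-cell argument still requires rescaling the horizontal variables as well, so the intermediate $D_\varepsilon$ step does not actually save work.
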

\begin{proof}
    See \cite[Theorem 2]{GahnJaegerTwoScaleTools} or \cite[Theorem 6]{griso2020homogenization}.
\end{proof}
Now, we formulate an embedding result on $H^1$, which allows to control the $L^2$-norm in the thin layer by its gradient and the $L^2$-norm in the bulks:
\begin{lemma}\label{lem:estimate_L2_membrane_Bulk_Gradient}
For every $\phieps \in H^1(\oemf)$ it holds that
\begin{align}\label{ineq:aux_trace_inequality}
        \frac{1}{\sqrt{\vareps}} \|\phieps\|_{L^2(\oemf)} \le C \left( \sqrt{\vareps} \|\nabla \phieps\|_{L^2(\oemf)} + \|\phieps\|_{L^2(S_{\vareps}^{\pm})}\right)
    \end{align}
This inequality is also valid if we replace $\oemf$ with $\oems$ and $S_{\vareps}^{\pm}$ with $\geps$.
In particular, for every $\phieps = (\phieps^+,\phieps^M,\phieps^-) \in H^1(\oef)$ we obtain
\begin{align*}
\frac{1}{\sqrt{\vareps}} \|\phieps\|_{L^2(\oemf)} \le C \left( \sqrt{\vareps} \|\nabla \phieps^M\|_{L^2(\oemf)} + \sum_{\pm} \|\phieps\|_{H^1(\oeps^{\pm})}\right).
\end{align*}
\end{lemma}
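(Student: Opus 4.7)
The plan is to reduce the inequality to a standard Poincaré–type estimate on the fixed reference cell $Z_f$ (respectively $Z_s$) and then use the $\varepsilon$-periodic structure of $\oemf$ (respectively $\oems$) to patch the local estimates together via the correct scaling.

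First I would establish the single-cell inequality: for every $\phi\in H^1(Z_f)$ one has
\begin{align*}
\|\phi\|_{L^2(Z_f)} \;\le\; C\bigl(\|\nabla\phi\|_{L^2(Z_f)} + \|\phi\|_{L^2(S^+)}\bigr),
\end{align*}
and analogously with $S^+$ replaced by $S^-$. This is a routine Friedrichs inequality: $Z_f$ is a fixed connected Lipschitz domain, and $S^+$ is a subset of $\partial Z_f$ with positive surface measure (this uses the assumption $\overline{Z_s}\cap S^{\pm}=\emptyset$, which guarantees that $S^{\pm}\subset\partial Z_f$). The constant depends only on $Z_f$. The corresponding inequality for $Z_s$ with $\Gamma$ in place of $S^{\pm}$ follows in exactly the same way, since $\Gamma$ has positive surface measure in $\partial Z_s$ and $Z_s$ is connected Lipschitz.

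Next I would transfer this to the scaled cells $\vareps(Z_f+k)$ for each $k\in K_\vareps$. For $\phieps\in H^1(\oemf)$, set $\hat{\phi}(y):=\phieps(\vareps(y+k))$. The change of variables gives
\begin{align*}
\|\hat{\phi}\|_{L^2(Z_f)}^2 &= \vareps^{-3}\|\phieps\|_{L^2(\vareps(Z_f+k))}^2, \\
\|\nabla\hat{\phi}\|_{L^2(Z_f)}^2 &= \vareps^{-1}\|\nabla\phieps\|_{L^2(\vareps(Z_f+k))}^2, \\
\|\hat{\phi}\|_{L^2(S^+)}^2 &= \vareps^{-2}\|\phieps\|_{L^2(\vareps(S^++k))}^2.
\end{align*}
Plugging the cell inequality into these identities, multiplying through by $\vareps^2$ to homogenize the powers, and then squaring and summing over $k\in K_\vareps$ (using $\oemf=\mathrm{int}\bigcup_{k}\vareps(\overline{Z_f}+k)$ and $S^+_\vareps=\bigcup_{k}\vareps(S^++k)$) yields
\begin{align*}
\vareps^{-1}\|\phieps\|_{L^2(\oemf)}^2 \;\le\; C\bigl(\vareps\,\|\nabla\phieps\|_{L^2(\oemf)}^2 + \|\phieps\|_{L^2(S^+_\vareps)}^2\bigr),
\end{align*}
which is \eqref{ineq:aux_trace_inequality} after taking square roots. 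The same argument with $Z_f$, $S^\pm$ replaced by $Z_s$, $\Gamma$ yields the companion inequality on $\oems$.

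Finally, for the global statement on $\phieps\in H^1(\oef)$, I would combine the inequality just proved with a standard trace inequality on the bulk. Since $\phieps$ has matching traces across $S^{\pm}_\vareps$, we may view $\|\phieps\|_{L^2(S^{\pm}_\vareps)}$ as the trace of $\phieps^{\pm}\in H^1(\oeps^{\pm})$. Because $\oeps^{\pm}=\Sigma\times(\pm\vareps,\pm H)$ is a family of domains converging to the fixed $\Omega^{\pm}$ as $\vareps\to 0$, the trace inequality $\|\phieps^{\pm}\|_{L^2(S^{\pm}_\vareps)}\le C\|\phieps^{\pm}\|_{H^1(\oeps^{\pm})}$ holds with a constant $C$ independent of $\vareps$ (e.g.\ by extending to $\Omega^{\pm}$ and applying the standard bulk trace inequality). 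Substituting this in the local inequality gives the claimed estimate.

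The only nontrivial point is the single-cell Friedrichs inequality in the first step, and this relies crucially on the geometric assumption $\overline{Z_s}\cap S^{\pm}=\emptyset$, which ensures that $S^{\pm}$ is a genuine subset of $\partial Z_f$ with positive measure; without this assumption $S^{\pm}$ could fail to touch the fluid part and the trace term would be meaningless. Everything else is a routine scaling-and-summation argument.
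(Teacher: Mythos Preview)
Your argument is correct and is exactly the standard cell-decomposition/scaling proof one expects for this type of estimate; the paper itself does not give a proof but simply refers to \cite[Lemma A.3]{gahn2025effective}, where presumably the same approach is carried out. One minor wording issue: you write ``multiplying through by $\vareps^2$ \ldots\ and then squaring,'' but of course the squaring (or using the squared form of the cell inequality) comes first and the multiplication by $\vareps^2$ afterwards---your displayed inequality is nonetheless correct.
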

\begin{proof}
See \cite[Lemma A.3]{gahn2025effective}.
\end{proof}

The following Lemma provides an extension operator for $H^1$-functions in the thin perforated layers with an explicit control for the gradient and the symmetric gradient. This results can be found for example in \cite{GahnJaegerTwoScaleTools}.
\begin{lemma}[Extension operator]\label{lem:Extension_operators}
 There exists an extension operator $E_{\vareps}^s: H^1(\oems) \rightarrow H^1(\oeps^M)$ such that for all $\phieps \in H^1(\oems)$ it holds that
\begin{align*}
    \|E_{\vareps}^s \phieps\|_{L^2(\oeps^M)} &\le C \left(\|\phieps\|_{L^2(\oems)} + \vareps \|\nabla \phieps\|_{L^2(\oems)}\right)  
    \\
    \|\nabla E_{\vareps}^s \phieps \|_{L^2(\oeps^M)} &\le C\|\nabla \phieps\|_{L^2(\oems)}, 
    \\
    \vareps\|D(E_{\vareps}^s \phieps)\|_{L^2(\oeps^M)} &\le C \|D(\phieps)\|_{L^2(\oems)},
\end{align*}
for a constant $C>0$ independent of $\vareps$. 
\end{lemma}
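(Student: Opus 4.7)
The plan is to build $E_\vareps^s$ cell-wise from a reference extension on $Z_s$ and then transfer the three estimates by scaling.

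At the reference cell level, I would construct a linear continuous operator $P \colon H^1(Z_s) \to H^1(Z)$, compatible with the $Y$-periodicity on the lateral faces, satisfying $\|P\phi\|_{L^2(Z)} \le C \|\phi\|_{H^1(Z_s)}$, $\|\nabla P\phi\|_{L^2(Z)} \le C \|\nabla \phi\|_{L^2(Z_s)}$, and the Korn-type estimate $\|D(P\phi)\|_{L^2(Z)} \le C \|D(\phi)\|_{L^2(Z_s)}$. The first two bounds follow from a Calder\'on-type extension for the Lipschitz domain $Z_s$. For the Korn-type bound I would employ the rigid-displacement splitting: writing $\phi = r + \psi$ with $r$ the $L^2(Z_s)$-projection onto the six-dimensional space of rigid displacements $r(y) = a + b\times y$ and $\psi$ orthogonal to rigid motions, Korn's second inequality on $Z_s$ gives $\|\nabla \psi\|_{L^2(Z_s)} \le C\|D(\psi)\|_{L^2(Z_s)} = C\|D(\phi)\|_{L^2(Z_s)}$. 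Extending $r$ as the same affine map on $Z$ and $\psi$ via a standard Calder\'on extension $\mathrm{ext}(\psi)$, I set $P\phi := r + \mathrm{ext}(\psi)$. Since $D(r)=0$, one has $\|D(P\phi)\|_{L^2(Z)} = \|D(\mathrm{ext}(\psi))\|_{L^2(Z)} \le \|\nabla\,\mathrm{ext}(\psi)\|_{L^2(Z)} \le C\|\nabla \psi\|_{L^2(Z_s)} \le C\|D(\phi)\|_{L^2(Z_s)}$.

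For the cell-wise gluing, I set $\phi_k(y) := \phieps(\vareps(y+k))$ on $Z_s$ and define $(E_\vareps^s \phieps)(x) := (P\phi_k)(x/\vareps - k)$ for $x \in \vareps(Z+k)$, $k \in K_{\vareps}$. The $Y$-periodic compatibility of $P$ ensures that the local extensions agree on the common lateral faces of neighboring cells, so that $E_\vareps^s \phieps \in H^1(\oeps^M)$. The estimates transfer via the change of variables $y = x/\vareps - k$: for the gradient, $\|\nabla(E_\vareps^s\phieps)\|_{L^2(\vareps(Z+k))}^2 = \vareps\|\nabla_y P\phi_k\|_{L^2(Z)}^2 \le C\vareps \|\nabla_y \phi_k\|_{L^2(Z_s)}^2 = C\|\nabla \phieps\|_{L^2(\vareps(Z_s+k))}^2$, and summation over $k \in K_{\vareps}$ yields the second estimate. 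The symmetric-gradient bound follows identically, and gives in fact the stronger $\|D(E_\vareps^s \phieps)\|_{L^2(\oeps^M)} \le C\|D(\phieps)\|_{L^2(\oems)}$, from which the stated form with the $\vareps$-prefactor is immediate. The $L^2$-bound uses the standard extension estimate $\|P\phi_k\|_{L^2(Z)}^2 \le C(\|\phi_k\|_{L^2(Z_s)}^2 + \|\nabla \phi_k\|_{L^2(Z_s)}^2)$ together with $\vareps^3\|\phi_k\|_{L^2(Z_s)}^2 = \|\phieps\|_{L^2(\vareps(Z_s+k))}^2$ and $\vareps^3\|\nabla_y\phi_k\|_{L^2(Z_s)}^2 = \vareps^2\|\nabla \phieps\|_{L^2(\vareps(Z_s+k))}^2$, explaining the $\vareps$-factor in front of $\|\nabla \phieps\|_{L^2(\oems)}$.

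The main obstacle is the construction of the reference extension $P$ satisfying the Korn-type estimate together with $Y$-periodic compatibility on the lateral faces. The rigid-displacement decomposition is straightforward on $Z_s$ alone, but one must ensure that the extension of the non-rigid part $\psi$ respects the periodic lateral boundary conditions. This is typically achieved either by a local reflection construction near the lateral faces or by using that the lateral boundary of $Z_s$ is itself $Y$-periodic, so that periodic traces of $\phi$ lift to periodic traces of $P\phi$. Once $P$ is in hand, the remaining cell-wise gluing and change-of-variables steps are routine.
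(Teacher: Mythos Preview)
The paper does not give a self-contained proof of this lemma; it simply refers to \cite{GahnJaegerTwoScaleTools}. Your strategy---build a reference extension $P:H^1(Z_s)\to H^1(Z)$, transfer cell-wise by scaling, and sum---is the standard one, and the rigid-displacement splitting is the right idea for the Korn-type bound at the level of a single cell.

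There is, however, a genuine gap in the gluing step for the third estimate. You invoke ``$Y$-periodic compatibility of $P$'' to make the local extensions match on shared lateral faces, but that is not the relevant property. Periodic compatibility says: if $\phi$ is $Y$-periodic then so is $P\phi$. The cell restrictions $\phi_k(y)=\phieps(\vareps(y+k))$ are \emph{not} periodic; what gluing requires is that the trace of $P\phi_k$ on $\{y_1=1\}$ coincide with the trace of $P\phi_{k+e_1}$ on $\{y_1=0\}$ whenever the corresponding traces of $\phi_k$ and $\phi_{k+e_1}$ on $\partial Z_s$ coincide. Your construction $P\phi=r+\mathrm{ext}(\psi)$ does not deliver this: the rigid motion $r_k$ is determined by $\phi_k$ on all of $Z_s$, not by its lateral trace, so $r_k$ and $r_{k+e_1}$ need not agree on the shared face even though $\phi_k|_{\{y_1=1\}\cap\partial Z_s}=\phi_{k+e_1}|_{\{y_1=0\}\cap\partial Z_s}$. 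Consequently the patched function need not lie in $H^1(\oeps^M)$.

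This is precisely why the paper states only the weaker bound $\vareps\|D(E_\vareps^s\phieps)\|\le C\|D(\phieps)\|$, not the stronger one you claim. The stronger inequality is in fact obtainable (in the spirit of Oleinik--Shamaev--Yosifian) via a genuinely local construction---reflections subordinate to a partition of unity whose supports straddle cell boundaries---so that the extension near a lateral face depends only on $\phieps$ near that face. Alternatively, the weaker bound with the $\vareps$-loss can be obtained by combining a single standard (non-Korn) cell-wise extension, which does glue, with the global Korn inequality on $\oems$ from Lemma~\ref{KornInequalityPerforatedLayer}, whose constant is $C/\vareps$. Either route is more delicate than your sketch indicates.
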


Finally, we give a Bogovoskii result. Here, the crucial point is the result in the thin layer.
\begin{lemma}\label{lem:Bogovskii}\
\begin{enumerate}[label = (\roman*)]
\item For every $h_{\vareps}^{\pm} \in L^2(\oeps^{\pm})$ there exists $\weps^{\pm} \in H^1(\oeps^{\pm},\partial_D \oeps^{\pm} \cup S_{\vareps}^{\pm})^3$ such that
\begin{align*}
\nabla \cdot \weps^{\pm} = h_{\vareps}^{\pm}, \qquad \|\weps^{\pm}\|_{H^1(\oeps^{\pm})} \le C \|h_{\vareps}^{\pm}\|_{L^2(\oeps^{\pm})}.
\end{align*}

\item For every $h_{\vareps}^M \in L^2(\oemf)$ there exists $\weps^M \in H^1(\oemf,\partial_D \oemf \cup \geps)^3$ such that
\begin{align*}
\nabla \cdot \weps^M = h_{\vareps}^M , \qquad \foe \|\weps^M\|_{L^2(\oemf)} + \|\nabla \weps^M \|_{L^2(\oemf)} \le C \|h_{\vareps}^M\|_{L^2(\oemf)}.
\end{align*}
In particular, there exists $\weps \in H^1(\oef,\partial_D \oef \cup \geps)^3$ with $\nabla \cdot \weps = h_{\vareps}^M$ in $\oemf$ and
\begin{align*}
    \foe \|\weps\|_{L^2(\oef)} + \|\nabla \weps\|_{L^2(\oef)} \le C \|h_{\vareps}^M\|_{L^2(\oemf)}.
\end{align*}
\end{enumerate}
\end{lemma}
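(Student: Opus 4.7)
My plan splits into two parts following the statement. Part (i) reduces to a standard Bogovskii result on a fixed Lipschitz domain via a uniformly bi-Lipschitz change of variables; part (ii), where the sharp $\vareps$-scaling appears, is built from a cell-wise Bogovskii operator on the reference cell together with a boundary correction.

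For part (i), I would introduce the affine diffeomorphism
\[
\Phi_{\vareps}^{\pm}(\x, x_3) := \left(\x,\, \tfrac{H\mp\vareps}{H} x_3 \pm \vareps\right)\colon \Omega^{\pm}\to\oeps^{\pm},
\]
whose Jacobian and its inverse are bounded uniformly in $\vareps$. On the fixed Lipschitz domain $\Omega^{\pm}$ the divergence operator
$\nabla\cdot\colon H^1(\Omega^{\pm},\partial_D\Omega^{\pm}\cup\Sigma)^3\to L^2(\Omega^{\pm})$
admits a classical continuous right inverse; no mean-value constraint on the data is required, since the unconstrained trace on $\partial_N\Omega^{\pm}$ absorbs the net flux. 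Pulling $h_{\vareps}^{\pm}$ back along $\Phi_{\vareps}^{\pm}$, applying this right inverse, and pushing the result forward produces $w_{\vareps}^{\pm}$ with an $\vareps$-independent operator norm.

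For part (ii), I would first construct once and for all a bounded linear operator $B\colon L^2(Z_f)\to H^1(Z_f,\Gamma)^3$ that is $Y$-periodic in $(y_1,y_2)$ and satisfies $\nabla_y\cdot B(g)=g$ for every $g\in L^2(Z_f)$. Such an operator is obtained by combining the standard Bogovskii construction applied to the mean-zero part of $g$ with a fixed divergence-lift through $S^+\cup S^-$ that transports the bulk mass $\int_{Z_f} g\,dy$; the fact that $\Gamma$ does not cover the whole of $\partial Z_f$ makes this possible. Then set, cell-by-cell,
\[
w_{\vareps}^M(x) := \vareps\, B\!\bigl(h_{\vareps}^M(\vareps\,\cdot + \vareps k)\bigr)\!\left(\tfrac{x}{\vareps}-k\right),\qquad x\in\vareps(Z_f+k),\ k\in K_{\vareps}.
\]
The $Y$-periodicity of $B$ ensures the cell contributions glue into an $H^1$-function on $\oemf$, and the standard change-of-variables scaling from $Z_f$ to $\vareps(Z_f+k)$ directly produces
\[
\|w_{\vareps}^M\|_{L^2(\oemf)} \le C\vareps\|h_{\vareps}^M\|_{L^2(\oemf)},\quad \|\nabla w_{\vareps}^M\|_{L^2(\oemf)} \le C\|h_{\vareps}^M\|_{L^2(\oemf)},
\]
with $w_{\vareps}^M$ vanishing on $\geps$ by construction. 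The global statement on $\oef$ follows by lifting the trace of $w_{\vareps}^M$ on $\sepm$ into an $\vareps$-thin slab of the bulk domains vanishing on $\partial_D\oeps^{\pm}\cup\partial_N\Omega$; the $O(\vareps)$-thickness of the slab preserves both the $L^2$- and the $H^1$-scaling.

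The main obstacle is imposing the Dirichlet condition on the lateral boundary $\partial_D\oemf$, which is not automatic from the $Y$-periodicity of $B$. I plan to treat it by multiplying the cell-wise construction by a smooth cut-off $\eta_{\vareps}$ equal to one outside an $\vareps$-tube around $\partial\Sigma\times(-\vareps,\vareps)$, vanishing on $\partial\Sigma$, with $\|\nabla\eta_{\vareps}\|_\infty\le C\vareps^{-1}$. The resulting divergence deficit is supported in a tube of volume $O(\vareps^2)$ and has $L^2$-norm of order $\|h_{\vareps}^M\|_{L^2(\oemf)}$, since the $\vareps^{-1}$-gradient of $\eta_{\vareps}$ is balanced by the $\vareps$-factor in $w_{\vareps}^M$. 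A localized second Bogovskii correction in that tube, again assembled cell-wise from a reference operator adapted to the boundary cells (with the lateral face now playing the role of a Dirichlet face and the remaining mass evacuated through $S^{\pm}$), absorbs the deficit while obeying the same scaling bounds. The full field is then the sum of the cut-off interior construction and the boundary correction, and both parts of the estimate follow by summing the local bounds.
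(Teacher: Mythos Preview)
Your cell-by-cell construction in part (ii) has a genuine gap in the gluing step. You claim that the $Y$-periodicity of $B$ ensures the pieces $w_{\vareps}^M|_{\vareps(Z_f+k)}$ assemble into an $H^1$-function on $\oemf$, but periodicity only gives $B(g_k)|_{\{y_1=1\}}=B(g_k)|_{\{y_1=0\}}$ \emph{within a single cell}. At the shared face between cells $k$ and $k+e_1$ you need $B(g_k)|_{\{y_1=1\}}=B(g_{k+e_1})|_{\{y_1=0\}}$, and since $g_k\neq g_{k+e_1}$ in general these traces have no reason to agree. As written, the assembled field is not in $H^1(\oemf)$.

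The fix is simple and in fact streamlines your argument: construct $B$ so that $B(g)$ vanishes not only on $\Gamma$ but also on the lateral part $\partial Y\times(-1,1)\cap\partial Z_f$ of the cell boundary, keeping the trace on $S^+\cup S^-$ free to absorb the net flux $\int_{Z_f}g\,dy$. This is possible for exactly the reason you already identified. Then the gluing is trivial (both sides vanish on every interior cell face), the Dirichlet condition on $\partial_D\oemf$ is automatic, and your entire cut-off paragraph with $\eta_{\vareps}$ and the second boundary correction can be deleted. The scaling estimates you wrote down are then immediate from the change of variables, and the extension to $\oef$ proceeds as you outlined. The paper itself does not give a self-contained argument here; it calls part (i) classical and refers part (ii) to Step~4 in the proof of \cite[Lemma~4.2]{gahn2022derivation}, so with the above correction your write-up would in fact be more explicit than the paper's.
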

\begin{proof}
The first result is classical. The second one can be found in Step 4 in the proof of \cite[Lemma 4.2]{gahn2022derivation}.
\end{proof}

\renewcommand{\theequation}{\thesection.\arabic{equation}}
\setcounter{equation}{0} %counter zurücksetzen

\renewcommand{\thelemma}{\thesection.\arabic{lemma}}
\setcounter{lemma}{0} %counter zurücksetzen

\renewcommand{\thedefinition}{\thesection.\arabic{definition}}
\setcounter{definition}{0} %counter zurücksetzen

\renewcommand{\theremark}{\thesection.\arabic{remark}}
\setcounter{remark}{0} %counter zurücksetzen

\renewcommand{\thecorollary}{\thesection.\arabic{corollary}}
\setcounter{corollary}{0} %counter zurücksetzen

\section{Two-scale convergence}
\label{sec:two_scale_convergence}

In this section we briefly repeat the definition of the two-scale convergence for thin (perforated) layers, see \cite{NeussJaeger_EffectiveTransmission}, and summarize some compactness results, in particular related to thin elastic materials, see \cite{GahnJaegerTwoScaleTools} and \cite{griso2020homogenization}. 
\begin{definition}[Two-scale convergence in $\oem$]
We say the sequence $\weps \in L^2((0,T)\times \oem)$ converges (weakly) in the two-scale sense to a limit function $w_0 \in L^2((0,T)\times  \Sigma \times Z)$ if 
\begin{align*}
\lim_{\vareps\to 0} \foe \int_0^T \int_{\oem} \weps(t,x) \phi \left(t,\x,\fxe\right) dxdt = \int_0^T\int_{\Sigma} \int_Z w_0(t,\x,y) \psi(t,\x,y) dy d\x dt
\end{align*}
for all $\phi \in L^2((0,T)\times \Sigma,C_{\#}^0(\overline{Z}))$. We write $\weps \rats w_0$.
\end{definition}
First of all we notice, that for every sequence $\weps \in L^2((0,T), H^1(\oem))$ fulfilling
\begin{align*}
    \frac{1}{\sqrt{\vareps}} \|\weps \|_{L^2((0,T)\times \oem)} \le C,
\end{align*}
there exists a two-scale convergent subsequence. If we have additional bounds for the gradients, we have the following well-known compactness result, see for example \cite{GahnEffectiveTransmissionContinuous}.
\begin{lemma}\label{LemmaBasicTSCompactness}\
For every sequence $\weps \in L^2((0,T), H^1(\oem))$ with
\begin{align*}
\frac{1}{\sqrt{\vareps}}\| \weps \|_{L^2((0,T)\times \oem)} + \sqrt{\vareps}\| \nabla \weps \|_{L^2((0,T)\times \oem)}  \le C,
\end{align*}
there exists $w_0 \in  L^2( (0,T)\times \Sigma, H_{\#}^1(Z)/\R)$, such that up to a subsequence
\begin{align*}
\weps  \rats w_0,\qquad 
\vareps \nabla \weps \rats \nabla_y w_0.
\end{align*}
\end{lemma}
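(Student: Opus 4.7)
The plan is to extract two-scale weakly convergent subsequences from the two given bounds and then identify the limit of the scaled gradient with $\nabla_y$ applied to the two-scale limit of $\weps$ via a weak integration by parts.

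First, the bound $\vareps^{-1/2}\|\weps\|_{L^2((0,T)\times \oem)}\le C$ places $\weps$ in the range of the basic two-scale compactness theorem for thin layers (cf.\ \cite{GahnEffectiveTransmissionContinuous}), yielding along a subsequence a limit $w_0 \in L^2((0,T)\times \Sigma \times Z)$ with $\weps\rats w_0$. Applied componentwise to $\vareps\nabla \weps$, whose $L^2((0,T)\times \oem)$-norm is of order $\sqrt{\vareps}$ by the second bound, the same theorem furnishes, along a further subsequence, $W_0\in L^2((0,T)\times \Sigma \times Z)^3$ with $\vareps\nabla \weps\rats W_0$.

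The heart of the proof is to show $W_0 = \nabla_y w_0$. To this end I would take test functions $\phi\in C_0^\infty((0,T)\times \Sigma, C_\#^\infty(\overline{Z}))^3$ vanishing in a neighborhood of $S^\pm$, set $\widehat\phi=(\phi_1,\phi_2,0)$, and exploit the chain-rule identity
\begin{equation*}
\nabla_x \bigl[\phi(t,\x,x/\vareps)\bigr] = (\nabla_{\x}\widehat{\phi})(t,\x,x/\vareps) + \vareps^{-1}(\nabla_y \phi)(t,\x,x/\vareps).
\end{equation*}
Integrating by parts in $x$, all boundary contributions vanish by virtue of the support of $\phi$ in $\Sigma$, its support away from $S^\pm$, and its $Y$-periodicity in $(y_1,y_2)$, leaving
\begin{equation*}
\int_0^T\int_{\oem}\vareps\nabla\weps\cdot \phi^\vareps\,dx\,dt = -\vareps\int_0^T\int_{\oem}\weps\,(\nabla_{\x}\cdot \widehat\phi)^\vareps\,dx\,dt - \int_0^T\int_{\oem}\weps\,(\nabla_y\cdot\phi)^\vareps\,dx\,dt,
\end{equation*}
where $(\cdot)^\vareps$ denotes evaluation at $(t,\x,x/\vareps)$. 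Dividing by $\vareps$ and sending $\vareps\to 0$, the left-hand side tends to $\int_0^T\int_\Sigma\int_Z W_0\cdot\phi\,dy\,d\x\,dt$ by the two-scale convergence of $\vareps\nabla\weps$; the first term on the right vanishes because Cauchy--Schwarz combined with $\|\weps\|_{L^2}\le C\sqrt{\vareps}$ gives an estimate of order $\vareps$; and the second term converges to $-\int_0^T\int_\Sigma\int_Z w_0(\nabla_y\cdot\phi)\,dy\,d\x\,dt$ by the two-scale convergence of $\weps$. This yields the distributional identity $W_0 = \nabla_y w_0$ on $\Sigma\times Z$ and in particular $w_0(t,\x,\cdot)\in H^1(Z)$ for almost every $(t,\x)$.

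Finally, the periodicity $w_0(t,\x,\cdot)\in H^1_\#(Z)$ is obtained by relaxing the test-function class to $\phi$ that are only $Y$-periodic in $(y_1,y_2)$ and not compactly supported there: the contributions from opposite lateral faces of the cell in the integration by parts then cancel in matched pairs only if the trace of $w_0$ itself matches across these faces. The quotient $/\R$ simply reflects that only $\nabla_y w_0$ is determined by the identities above, so $w_0$ is specified up to a $(t,\x)$-dependent additive constant. I expect the main point of care to be the uniform control of boundary contributions on $S_\vareps^\pm$ in the integration by parts; this is handled by the explicit choice of $\phi$ vanishing near $S^\pm$, followed by a routine density argument to enlarge the admissible class. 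Since all remaining steps follow the standard template of two-scale analysis for thin domains, no genuinely new obstacle is anticipated.
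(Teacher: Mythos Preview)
The paper does not give its own proof of this lemma; it is stated as a well-known compactness result with a reference to \cite{GahnEffectiveTransmissionContinuous}. Your argument is a correct, self-contained version of the standard proof one would find in that reference: extract two-scale limits of $\weps$ and $\vareps\nabla\weps$ separately, then identify the second as $\nabla_y$ of the first by integrating by parts against oscillating periodic test functions and letting the $O(\vareps)$ cross term vanish. The handling of boundary contributions (compact support in $\Sigma$, vanishing near $S^\pm$, periodic cancellation on lateral faces) is exactly the right mechanism.

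One small remark: your explanation of the quotient $/\R$ is slightly off. The two-scale limit $w_0$ is uniquely determined as an $L^2$ function once the subsequence is fixed; there is no additive-constant ambiguity in $w_0$ itself. The quotient in the target space $H_\#^1(Z)/\R$ simply records that in the subsequent analysis only $\nabla_y w_0$ is relevant, so one may work modulo $y$-constants without loss. This does not affect the validity of your argument.
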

For the displacement of the elastic solid we usually have a bound for the symmetric gradient. In this case we have the following two-scale compactness result:
\begin{lemma}\label{lem:two_scale_Kirchhoff_Love}
Let $\weps\in L^2((0,T),H^1(\oems,\partial_D \oems))^3$  be a sequence with
\begin{align*}
\|D(\weps)\|_{L^2((0,T)\times \oems)} \le C \vareps^{\frac32}.
\end{align*}
Then  there exist $w_0^3 \in L^2((0,T),H^2_0(\Sigma))$, $\hat{w}_1=(w_1,w_2) \in L^2((0,T),H^1_0(\Sigma))^2$,  and $w_2 \in L^2((0,T)\times \Sigma, H_{\#}^1(Z_s)/\R)^3$, such that up to a subsequence (for $\alpha = 1,2$)
\begin{align*}
\chi_{\oems}\weps^3 &\rats \chi_{Z_s}w_0^3,
\\
\chi_{\oems}\frac{\weps^{\alpha}}{\epsilon} &\rats \chi_{Z_s}\big({w}_1^{\alpha} - y_3 \partial_{\alpha} w_0^3\big),
\\
\frac{1}{\epsilon} \chi_{\oems} D(\weps) &\rats  \chi_{Z_s} \left(D_{\x}(\hat{w}_1) - y_3 \nabla_{\x}^2 w_0^3 + D_y(w_2) \right).
\end{align*}
\end{lemma}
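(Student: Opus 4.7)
The plan is to mimic, in the periodically perforated setting, the classical Kirchhoff-Love compactness argument for thin elastic plates (cf.\ \cite{griso2020homogenization} and \cite{GahnJaegerTwoScaleTools}). First I would apply the Korn inequality of Lemma \ref{KornInequalityPerforatedLayer} to the hypothesis $\|D(\weps)\|_{L^2((0,T)\times\oems)}\le C\vareps^{3/2}$; the factor $1/\vareps$ in front of the horizontal components of $\weps$ in Korn's estimate then gives, after integration in time, the sharp scalings
\[
\tfrac{1}{\vareps}\|\weps^\alpha\|_{L^2((0,T)\times\oems)}+\|\weps^3\|_{L^2((0,T)\times\oems)}+\|\nabla\weps\|_{L^2((0,T)\times\oems)}\le C\sqrt{\vareps},\quad\alpha=1,2,
\]
which are precisely those required by the basic thin-layer two-scale compactness of Lemma \ref{LemmaBasicTSCompactness}.

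Next, applying that lemma separately to $\weps^3$ and to the rescaled horizontal components $\weps^\alpha/\vareps$, I extract two-scale limits $w_0^3$ and $\widetilde w^{(1)}_\alpha$ in $L^2((0,T)\times\Sigma,H^1_\#(Z_s)/\R)$, together with $\vareps\nabla\weps^3\rats\nabla_y w_0^3$ and $\nabla\weps^\alpha\rats\nabla_y\widetilde w^{(1)}_\alpha$. The bound $\|D(\weps)/\vareps\|_{L^2((0,T)\times\oems)}\le C\sqrt{\vareps}$ additionally provides a two-scale limit $\mathcal D_0\in L^2((0,T)\times\Sigma\times Z_s;\R^{3\times 3}_{\mathrm{sym}})$ of $\chi_{\oems}D(\weps)/\vareps$. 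Since $\|\vareps\nabla\weps^3\|_{L^2}\le C\vareps^{3/2}=o(\sqrt{\vareps})$, the limit $\nabla_y w_0^3$ vanishes and $w_0^3=w_0^3(t,\x)$. Multiplying $D(\weps)/\vareps$ by $\vareps$ I also get $D(\weps)\rats 0$, which, read component-wise against the already identified two-scale limits of the derivatives, forces $\tfrac12(\partial_{y_i}\widetilde w^{(1)}_j+\partial_{y_j}\widetilde w^{(1)}_i)=0$ for $i,j\in\{1,2\}$; combined with $Y$-periodicity this reduces $\widetilde w^{(1)}_\alpha$ to a function depending only on $(t,\x,y_3)$.

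The main obstacle is the final identification of the Kirchhoff-Love ansatz
\[
\widetilde w^{(1)}_\alpha(t,\x,y_3)=w_1^\alpha(t,\x)-y_3\partial_\alpha w_0^3(t,\x),
\]
together with $w_0^3\in L^2((0,T),H^2_0(\Sigma))$ and $\hat w_1\in L^2((0,T),H^1_0(\Sigma))^2$. My plan is to use a Griso-type decomposition of $\weps$ on each cell $\vareps(Z_s+k)$ into an affine part (translation plus infinitesimal rotation) and a warping remainder, as in \cite[Theorem 2]{griso2020homogenization} and its perforated-layer adaptation in \cite{GahnJaegerTwoScaleTools}. The $O(\vareps^{3/2})$-bound on $D(\weps)$ controls the warping, while the $L^2$-bound on $(D(\weps)/\vareps)_{3\alpha}$ couples the infinitesimal rotation to $\nabla_{\x} w_0^3$; passing to the two-scale limit yields $\partial_{y_3}\widetilde w^{(1)}_\alpha=-\partial_\alpha w_0^3$. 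The $H^2$-regularity of $w_0^3$ then follows from the appearance of $\nabla^2_{\x} w_0^3$ in the $L^2$ object $\mathcal D_0$, and the Dirichlet boundary condition on $\partial_D\oems$ propagates by a standard trace argument to $w_0^3\in H^2_0(\Sigma)$ and $\hat w_1\in H^1_0(\Sigma)^2$. The corrector $w_2\in L^2((0,T)\times\Sigma,H^1_\#(Z_s)/\R)^3$ finally emerges as the residual fluctuation making $\mathcal D_0=D_{\x}(\hat w_1)-y_3\nabla^2_{\x} w_0^3+D_y(w_2)$.
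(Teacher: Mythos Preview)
The paper does not give its own proof of this lemma; it simply cites \cite[Theorem 3]{GahnJaegerTwoScaleTools}. Your sketch reproduces precisely the strategy of that reference (and of \cite{griso2020homogenization}): Korn's inequality in the perforated layer to obtain the correct scalings, the basic two-scale compactness of Lemma \ref{LemmaBasicTSCompactness} for $\weps^3$ and $\weps^{\alpha}/\vareps$, and then a Griso-type cellwise decomposition into a rigid part and a warping remainder to identify the Kirchhoff-Love structure $\widetilde w^{(1)}_\alpha = w_1^\alpha - y_3 \partial_\alpha w_0^3$ together with the regularities $w_0^3\in H^2_0(\Sigma)$ and $\hat w_1\in H^1_0(\Sigma)^2$. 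So your approach coincides with that of the cited source.

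One cautionary remark: in your heuristic paragraph before invoking the Griso decomposition you write that the bound on $(D(\weps)/\vareps)_{3\alpha}$ ``couples the infinitesimal rotation to $\nabla_{\x} w_0^3$'' and gives $\partial_{y_3}\widetilde w^{(1)}_\alpha=-\partial_\alpha w_0^3$. This step cannot be read off directly from the two-scale limits of $\nabla \weps^{\alpha}$ and $\vareps\nabla \weps^3$ alone, because $\partial_\alpha \weps^3$ (without the factor $\vareps$) is not a priori bounded in the right norm, and the very existence of $\partial_\alpha w_0^3\in L^2$ is part of what has to be shown. It is exactly the cell decomposition argument that supplies the missing compactness for the rotation angles and yields simultaneously $w_0^3\in H^1(\Sigma)$ and the identity $\partial_{y_3}\widetilde w^{(1)}_\alpha=-\partial_\alpha w_0^3$; the subsequent $H^2$-regularity then indeed comes from reading $\nabla_{\x}^2 w_0^3$ off the two-scale limit $\mathcal D_0$. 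As long as you carry out this decomposition step carefully, your argument is complete.
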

\begin{proof}
See \cite[Theorem 3]{GahnJaegerTwoScaleTools}. 
\end{proof}

\end{appendix}

\bibliographystyle{abbrv}
\bibliography{literature}

\end{document}